\definecolor{vert}{rgb}{0,0.6,0}
\numberwithin{figure}{section}
\theoremstyle{plain}
\newtheorem{thm}{Theorem}[section]
\newtheorem{defn}{Definition}
\newtheorem{quest}{Question}
\newtheorem{lem}[thm]{Lemma}
\newtheorem{cor}[thm]{Corollary}
\newtheorem{prop}[thm]{Proposition}
\theoremstyle{remark}
\newtheorem{rem}{\bf{Remark}}
\numberwithin{equation}{section}
\newcommand{\R}{\mathbb{R}}
\tikzstyle{startstop} = [rectangle, rounded corners, minimum width=4cm, minimum height=2cm,text centered, draw=black, fill=red!30]
\tikzstyle{arrow} = [thick,->,>=stealth]
\def\namedlabel#1#2{\begingroup
    #2%
    \def\@currentlabel{#2}%
    \phantomsection\label{#1}\endgroup
}
\newcommand{%
    
    \import{./figs/}{.pdf_tex}
}[1]{%
    
    \import{./figs/}{#1.pdf_tex}
}
\definecolor{darkgreen}{rgb}{0,0.4,0}
\begin{document}
\title{The regularity with respect to domains of the additive eigenvalues of superquadratic Hamilton--Jacobi equation}

\thanks{ $^\dagger$ Corresponding author.\\
Dohyun Kwon was supported by the 2023 Research Fund of the University of Seoul. The work of Son N. T. Tu began at University of Wisconsin-Madison, and is supported in part by NSF grants DMS-1664424 and DMS-1843320. Farid Bozorgnia was supported by the Portuguese National Science Foundation through FCT fellowships.
}

\begin{abstract} 

We study the additive eigenvalues on changing domains, along with the associated vanishing discount problems. We consider the convergence of the vanishing discount problem on changing domains for a general scaling type $\Omega_\lambda = (1+r(\lambda))\Omega$ with a continuous function $r$ and a positive constant $\lambda$. We characterize all solutions to the ergodic problem on $\Omega$ in terms of $r$. In addition, we demonstrate that the additive eigenvalue $\lambda\mapsto c_{\Omega_\lambda}$ on a rescaled domain $\Omega_\lambda = (1+\lambda)\Omega$ possesses one-sided derivatives everywhere. Additionally, the limiting solution can be parameterized by a real function, and we establish a connection between the regularity of this real function and the regularity of $\lambda \mapsto c_{\Omega_\lambda}$. We provide examples where higher regularity is achieved.

\date{May 4, 2024}
\end{abstract}

\author{Farid Bozorgnia}
\address[F. Bozorgnia]
{
CAMGSD, Department of Mathematics, Instituto Superior Técnico, Lisbon, Portugal}
\email{farid.bozorgnia@tecnico.ulisboa.pt}

\author{Dohyun Kwon}
\address[D. Kwon]
{
Department of Mathematics, University of Seoul, 163 Seoulsiripdaero, Dongdaemun-gu, Seoul, 02504, Republic of Korea}
\email{dh.dohyun.kwon@gmail.com}

\author{Son N. T. Tu$^\dagger$}
\address[S. N.T. Tu]
{
Department of Mathematics, 
Michigan State University, 
619 Red Cedar Road, 
East Lansing, MI 48824, USA}
\email{tuson@msu.edu}

\keywords{first-order Hamilton--Jacobi equations; second-order Hamilton--Jacobi equations; state-constraint problems; optimal control theory; rate of convergence; viscosity solutions; semiconcavity.}
\subjclass[2010]{
35B40, 
35D40, 
49J20, 
49L25, 
70H20 
}
\maketitle
\setcounter{tocdepth}{1}

\section{Introduction}

The vanishing discount problem concerns the behavior of the family of solutions as the discount factor goes to $0$. Let $v_\lambda$ be the solution to the state-constraint problem with the discount factor $\lambda>0$,
\begin{equation}\label{eq:main}\tag{$\lambda, \Omega$}
\begin{cases}
     \lambda v_\lambda(x) + \left|Dv_\lambda(x)\right|^p -f(x) - \varepsilon \Delta v_\lambda(x) \leq 0 \qquad\text{in}\;\Omega, \\
     \lambda v_\lambda(x) + \left|Dv_\lambda(x)\right|^p -f(x) - \varepsilon \Delta v_\lambda(x) \geq 0 \qquad\text{on}\;\overline{\Omega},
\end{cases}
\end{equation}
where $\Omega$ is an open, bounded, and connected domain in $\mathbb{R}^n$ with $\mathrm{C}^2$ boundary, $\varepsilon>0$ is fixed, $p>2$ and $f\in \mathrm{C}^1(\overline{\Omega})$. The Hamiltonian is given by $H(x,\xi) = |\xi|^p - f(x)$ for  $(x,\xi) \in \overline{\Omega}\times \R^n$. As shown in \cite{lasry_nonlinear_1989}, we have
$\lambda v_\lambda \to -c(0)$ and $v_{\lambda} - v_\lambda(x_0) \to v$ as $\lambda \to 0^+$ for a fixed $x_0\in \Omega$ where $v$ solves the ergodic problem:
\begin{equation}\label{eq:S_0}\tag{$0, \Omega$}
\begin{cases}
    H(x,Dv(x)) - \varepsilon \Delta v(x) \leq c_\Omega \qquad\text{in}\;\Omega,\\
    H(x,Dv(x)) - \varepsilon \Delta v(x) \geq c_\Omega \qquad\text{on}\;\overline{\Omega}.
\end{cases}
\end{equation}The additive eigenvalue denoted by $c(0)$ is defined as
\begin{equation}\label{eq:c(0)}
    c_\Omega = \min \big\lbrace c\in \mathbb{R}: H(x,Du(x)) - \varepsilon \Delta u(x) \leq c\;\;\text{in}\;\Omega\;\text{has a solution} \big\rbrace
\end{equation}
and it is also the unique constant where \eqref{eq:S_0} can be solved. 
Let $c_{\Omega_\lambda}$ be also the unique constant such that the cell problem on $\Omega_\lambda$ below can be solved:
\begin{equation}\label{eq:cell_lambda}\tag{$0, \Omega_\lambda$}
\begin{cases}
   H(x,Du (x))  - \varepsilon\Delta u(x) \leq c_{\Omega_\lambda} \qquad\text{in}\;\;\Omega_\lambda,\\
   H(x,Du(x))  - \varepsilon\Delta u(x) \geq c_{\Omega_\lambda} \qquad\text{on}\;\overline{\Omega}_\lambda.
\end{cases} 
\end{equation} 

The authors of \cite{barles_large_2010} present some first properties of the additive eigenvalues with respect to the underlying domains when $1< p\leq 2$. To be precise, let $c_\Omega$ be the additive eigenvalue of $H$ on $\Omega$. They show that $c_\Omega \leq c_{\Omega'}$ if $\Omega\subset\Omega'$ and, in general, one has continuity with respect to the domain (under some appropriate perturbations). The deeper properties of this map (especially in the second-order case and with $p>2$) seem to remain uninvestigated. 

In the linear case, the smoothness of $\lambda\mapsto c_{\Omega_\lambda}$, the principal eigenvalue for elliptic equations, is vastly studied (see Section \ref{sec:the quad case}) for a general perturbation $\Omega_\lambda = \left\lbrace x+\lambda \textbf{V}(x): x\in \Omega\right\rbrace$. When $p=2$ and $\mathbf{V} = \mathbf{Id}$, the problem we consider can be transformed into this linear problem, and we obtain the smoothness of this map via the so-called Hopf--Cole transform. We note that solutions to \eqref{eq:main} have different behaviors when $p>2$ and $1<p\leq 2$, which solutions are also called \emph{large solution} that explode on the boundary. 
The case $1<p\leq 2$ will be addressed in future work.

In the case of the first order Hamilton–Jacobi equations, using Mather measures from weak KAM theory, it is known that if $\Omega_\lambda = (1+\lambda)\Omega$ then $\lambda \mapsto c_{\Omega_\lambda}$ is one-sided differentiable everywhere (see \cite[Theorem 1.4]{tu_vanishing_2021}).  

\subsection{Assumptions and main results} Throughout the paper, we make the following assumptions.

\begin{description}[style=multiline, labelwidth=1cm, leftmargin=2.0cm]
    \item[\namedlabel{itm:A1}{$(\mathcal{A}_1)$}] The domain $\Omega$ is a bounded star-shaped (with respect to the origin) open subset of $\mathbb{R}^n$, and there exists some $\kappa > 0$ such that $B(0,\kappa)\subset\subset \Omega$ and
    \begin{equation}\label{eq:geometric-condition}
        \mathrm{dist}(x,\overline{\Omega}) \geq \kappa r \qquad\text{for all}\; x\in (1+r) \partial\Omega, \;\text{for all}\;r>0. 
    \end{equation}
    The Hamiltonian is $H(x,\xi) = |\xi|^p - f(x)$ where $p>2$, $f\in \mathrm{C}^1(\overline{U})$ and $U\subset \mathbb{R}^n$ is an open, bounded set with smooth boundary such that $\overline{\Omega}\subset U$. 
\end{description}
Condition \eqref{eq:geometric-condition} is introduced in \cite{capuzzo-dolcetta_hamiltonjacobi_1990} to ensure the well-posedness of \eqref{eq:main}. For Theorem \ref{thm:general}, addressing vanishing discounts, we use the following changing domain setup.
\begin{description}[style=multiline, labelwidth=1cm, leftmargin=2.0cm]
    \item[\namedlabel{itm:A2}{$(\mathcal{A}_2)$}] We consider $\Omega_\lambda := (1+r(\lambda))\Omega$ for $\lambda > 0$ such that $\Omega_\lambda \subset U$, and $r:[0,\infty)\rightarrow \mathbb{R}$ is continuous with $r(0) = 0$ and 
    \begin{equation}\label{eq:gam}
        \lim_{\lambda\rightarrow 0^+}\frac{r(\lambda)}{\lambda} = \gamma  \in (-\infty,+\infty). 
    \end{equation}
\end{description}
We denote by $L(x,v)$ the Legendre transform of $H(x,\xi)$:
\begin{equation}\label{eq:formula_L}
    L(x,v) = C_p|v|^q + f(x), \quad \text{where}\quad C_p = p^{-1/q}(p-1), \; p^{-1}+q^{-1}=1.
\end{equation}
We write $\nabla L(x,v) = \big(D_xL(x,v), D_vL(x,v)\big)$ for $(x,v)\in \overline{\Omega}\times\R^n$. For a measure $\mu$ on $\overline{\Omega}\times \R^n$, we define 
\begin{equation}\label{eq:integration_notation_main}
    \langle \mu,  \varphi\rangle_\Omega := \int_{\overline{\Omega}\times \R^n}  \varphi(x,v)\;d\mu(x,v), \qquad\text{for}\; \varphi\in \mathrm{C}(\overline{\Omega}\times \R^n)\cap L^1(\mu).
\end{equation}
Let $\mathcal{M}(\Omega)$ be the set of \emph{viscosity Mather measures} (Definition \ref{def:M_0}). We first show the convergence for the following vanishing discount problem,
\begin{equation}\label{eq:S_lambda}\tag{$ \lambda , \Omega_\lambda$}
\begin{cases}
    \lambda  u_\lambda(x) +  H(x,Du_\lambda(x))  - \varepsilon\Delta u_\lambda(x) \leq 0 \qquad\text{in}\;\;\Omega_\lambda,\\
    \lambda  u_\lambda(x) +  H(x,Du_\lambda(x))  - \varepsilon \Delta u_\lambda(x) \geq 0 \qquad\text{on}\;\overline{\Omega}_\lambda.
\end{cases} 
\end{equation}
Similar to \cite{tu_vanishing_2021}, if $r(\lambda)$ and $\gamma$ are given in \ref{itm:A2} then $u_\lambda +  \lambda ^{-1}c_\Omega$ is bounded and convergent. 

\begin{thm}\label{thm:general} Assume \ref{itm:A1} and \ref{itm:A2}. Let $u_\lambda\in \mathrm{C}(\overline{\Omega}_\lambda)$ be the solution to \eqref{eq:S_lambda}. 
\begin{itemize}
    \item[$(\mathrm{i})$] We have $u_\lambda +  \lambda ^{-1}c_\Omega \to u^\gamma$ as $\lambda\to 0$ uniformly on $\overline{\Omega}$ and $u^\gamma$ is a solution to \eqref{eq:S_0}. 
    \item[$(\mathrm{ii})$] Furthermore $u^\gamma = \max_{w\in \mathcal{E}^\gamma} w$ where $\mathcal{E}^\gamma$ denotes the family of subsolutions $w$ of \eqref{eq:S_0} such that
    \begin{equation}\label{eq:maxM_gamma}
      \gamma\big\langle \mu,  (-x,v)\cdot \nabla L(x,v)\big\rangle_\Omega +\langle \mu, w\rangle_\Omega \leq 0
      \qquad\text{for all}\; \mu\in \mathcal{M}(\Omega).
    \end{equation}
    We note that $u^\gamma$ does not depend on $r(\lambda)$, but only on the limit $\gamma\in \mathbb{R}$ of $\lim_{\lambda\to 0^+} r(\lambda)/\lambda$. 
\end{itemize}
\end{thm}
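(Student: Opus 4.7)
The strategy follows the first-order analysis in \cite{tu_vanishing_2021}, adapted to account for (a) the second-order term $\varepsilon\Delta$ and (b) the varying domain $\Omega_\lambda=(1+r(\lambda))\Omega$. The first move is to pull everything back to the fixed reference domain: set $\tilde u_\lambda(y):=u_\lambda((1+r(\lambda))y)$ for $y\in\overline{\Omega}$. Then $\tilde u_\lambda$ is a state-constraint solution on $\overline{\Omega}$ of a perturbed problem with rescaled Hamiltonian $(1+r(\lambda))^{-p}|D\tilde u_\lambda|^p-f((1+r(\lambda))y)$ and viscosity coefficient $\varepsilon(1+r(\lambda))^{-2}$, which avoids the awkwardness of comparing functions living on different domains.

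For (i), a uniform Lipschitz estimate for $\tilde u_\lambda$ follows from standard Bernstein-type gradient bounds for superquadratic state-constraint problems, which are uniform in $\lambda$ under the geometry condition \ref{itm:A1} and the $\mathrm{C}^2$ regularity of $\partial\Omega$. Combined with the Lipschitz bound $|c(\lambda)-c(0)|\leq C|\lambda|$ from Corollary \ref{cor:diff_ae} and the Lasry--Lions estimate $\|\lambda u_\lambda+c(0)\|_{L^\infty}\to 0$ from \cite{lasry_nonlinear_1989}, the family $w_\lambda:=\tilde u_\lambda+\lambda^{-1}c(0)$ is uniformly bounded and equicontinuous on $\overline{\Omega}$. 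Arzel\`a--Ascoli yields a subsequential limit $u^\gamma\in\mathrm{C}(\overline{\Omega})$, and stability of viscosity solutions for the rescaled problem (whose coefficients converge to those of \eqref{eq:S_0}) forces $u^\gamma$ to solve the ergodic problem.

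For (ii), I would first show $u^\gamma\in\mathcal{E}^\gamma$ by testing the subsolution inequality for $\tilde u_\lambda$ against a Mather measure $\mu\in\mathcal{M}(\Omega)$. Using the Fenchel inequality $H(x,D\tilde u_\lambda)\geq v\cdot D\tilde u_\lambda-L(x,v)$, integrating against $\mu$, and invoking the closedness identity from Definition \ref{def:cone} (designed precisely to absorb both the transport pairing with $D\tilde u_\lambda$ and the viscous term $\varepsilon(1+r(\lambda))^{-2}\Delta\tilde u_\lambda$), a Taylor expansion in $r(\lambda)$ yields
\[
\langle\mu,w_\lambda\rangle_\Omega+\frac{r(\lambda)}{\lambda}\langle\mu,(-x,v)\cdot\nabla L(x,v)\rangle_\Omega\leq o(1).
\]
Sending $\lambda\to 0^+$ and using $r(\lambda)/\lambda\to\gamma$ gives \eqref{eq:maxM_gamma} for $w=u^\gamma$. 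The maximality $u^\gamma\geq w$ for every $w\in\mathcal{E}^\gamma$ is then obtained by the converse duality: for such $w$ one constructs a compatible admissible function for the discounted problem on $\Omega_\lambda$, applies the comparison principle on $\Omega_\lambda$, and passes to the limit $\lambda\to 0^+$, using \eqref{eq:maxM_gamma} to cancel exactly the leading-order $O(r(\lambda)/\lambda)$ correction coming from the domain variation. This simultaneously identifies the unique subsequential limit in (i) and completes (ii).

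The principal obstacle I anticipate is the Mather-measure test just sketched: one must carefully expand the rescaled Lagrangian $L((1+r(\lambda))x,v)$ and the prefactor $(1+r(\lambda))^{-p}$ around $\lambda=0$ and, more delicately, verify that the viscous term $\varepsilon(1+r(\lambda))^{-2}\Delta\tilde u_\lambda$ is genuinely absorbed at leading order by the adjoint structure of $\mathcal{F}$, leaving only the scalar quantity $\gamma\langle\mu,(-x,v)\cdot\nabla L\rangle_\Omega$. This is the point where the second-order case diverges from \cite{tu_vanishing_2021} and where the precise design of the viscosity Mather cone in Definition \ref{def:cone} is used; a secondary technical issue is preserving the state-constraint boundary condition under the pullback, which leans on \ref{itm:A1} and the $\mathrm{C}^2$ regularity of $\partial\Omega$.
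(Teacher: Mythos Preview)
Your overall strategy---compactness plus a two-sided duality test against Mather measures---matches the paper, but there is a concrete gap in your scaling that creates precisely the difficulty you flag at the end. You set $\tilde u_\lambda(y)=u_\lambda((1+r(\lambda))y)$, which leaves the rescaled equation with viscosity coefficient $\varepsilon(1+r(\lambda))^{-2}$. The dual cone $\mathcal{G}'_{0,\Omega}$ in Definition~\ref{def:cone} is built with the \emph{fixed} operator $-\varepsilon\Delta$, so $(\varphi,\tilde u_\lambda)$ does not land in $\mathcal{F}_{0,\Omega}$ and you cannot conclude $\langle\mu,\varphi\rangle_\Omega\ge 0$ for $\mu\in\mathcal{M}(\Omega)$. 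The paper sidesteps this by using $\tilde u_\lambda(x)=(1+r(\lambda))^{-2}u_\lambda((1+r(\lambda))x)$, for which $\Delta\tilde u_\lambda(x)=\Delta u_\lambda((1+r(\lambda))x)$ and the coefficient of the Laplacian stays exactly $\varepsilon$. The perturbation is then pushed entirely into the Hamiltonian, which becomes $H((1+r)x,(1+r)\xi)$; its Legendre dual $L((1+r)x,v/(1+r))=C_p(1+r)^{-q}|v|^q+f((1+r)x)$ is manifestly of the form $tL(x,v)+\chi(x)$ and hence lies in $\Phi^+(\overline{\Omega}\times\R^n)$. With this scaling the duality step is immediate and no ``absorption of the viscous term at leading order'' is needed---this is exactly the point of the separable Hamiltonian and the restricted test-function cone.

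A second gap is your maximality argument. Saying ``construct a compatible admissible function for the discounted problem on $\Omega_\lambda$ and apply the comparison principle'' does not work as stated: the scaled-up $w$ is a subsolution of a \emph{perturbed} equation on $\Omega_\lambda$, not of the one $u_\lambda$ solves, so there is no direct comparison available. The paper instead takes a minimizing measure $\mu_\lambda\in\mathcal{M}(\lambda,z,\Omega_\lambda)$ from the representation $\lambda u_\lambda(z)=\langle\mu_\lambda,L\rangle_{\Omega_\lambda}$ (Theorem~\ref{thm:repdelta>0_main}), tests the scaled-up $w$ against $\mu_\lambda$ via the dual cone $\mathcal{G}'_{z,\lambda,\Omega_\lambda}$, and passes to the weak limit $\tilde\mu_\lambda\rightharpoonup\mu_0\in\mathcal{U}_\gamma(\Omega,z)\subset\mathcal{M}(\Omega)$; the constraint \eqref{eq:maxM_gamma} for $w$ is then precisely what makes the limiting inequality collapse to $u^\gamma(z)\ge w(z)$. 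Finally, a minor point: solutions here are only $\mathrm{C}^{0,\alpha}(\overline{\Omega})$ with $\alpha=(p-2)/(p-1)$ (Theorem~\ref{thm:Holder_main}), not Lipschitz up to the boundary, so your equicontinuity should come from the uniform H\"older estimate rather than a Bernstein gradient bound.
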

As a consequence, if $r(\lambda)/\lambda \to 0$ as $\lambda\to 0^+$ then $u^\lambda + \lambda^{-1}c_\Omega \to u^0$, the same limiting solution to the vanishing discount problems on the fixed domain $\Omega$  as in \cite[Theorem 3.7]{ishii_vanishing_2017}.

\begin{rem} \quad 
\begin{itemize}
    \item[(i)] The term \emph{viscosity Mather measure} is introduced in \cite{ishii_vanishing_2017-1, ishii_vanishing_2017}, in constrast to \emph{stochastic Mather measure} introduced in \cite{gomes_stochastic_2002}. 
    We should denote the effect of viscosity by $\mathcal{M}_\varepsilon(\Omega)$ in \eqref{eq:M0}, but for simplicity, we did not include $\varepsilon$ since the results in this paper concern the limit as $\lambda\to 0^+$. Nevertheless, it is evident that weak limits of measures in $\mathcal{M}_\varepsilon(\Omega)$ correspond to measures in $\mathcal{M}_0(\Omega)$ due to the stability of viscosity solutions.
    \item[(ii)] In the proof of Theorem \ref{thm:limit} (where $\varepsilon>0$) and its corresponding first-order analog \cite{tu_vanishing_2021} (where $\varepsilon=0$), the scaling differs due to the appearance of $-\varepsilon\Delta$. We plan to address this discrepancy in an upcoming work, where we explore this issue together with the approach to this problem using the framework of \cite{gomes_stochastic_2002} instead. 
\end{itemize}
\end{rem}
The next result concerns the differentiability of $c_{\Omega_\lambda}$ with respect to the scaling parameter $r(\lambda)$. We observe that the choice of $r(\lambda)$ and $\gamma\in \mathbb{R}$ does not affect the study of the derivatives of $r(\lambda)\mapsto c_{(1+r(\lambda)\Omega)}$. To simplify notation, we assume the following.
\begin{description}[style=multiline, labelwidth=1cm, leftmargin=2.0cm]
    \item[\namedlabel{itm:A3}{$(\mathcal{A}_3)$}] We assume $r(\lambda) = \lambda$ for $\lambda \in (-\varepsilon_0,\varepsilon_0)$ for some $\varepsilon_0>0$. As $\Omega_\lambda = (1+\lambda)\Omega$, we will write $c(\lambda) = c_{\Omega_\lambda}$ and $c(0) = c_\Omega$ for simplicty. 
\end{description} 
It is important to note that in the context of \ref{itm:A3}, $\lambda$ does not represent a discount factor.

\begin{thm}\label{thm:limit} Assume \ref{itm:A1}, and \ref{itm:A3}. The map $\lambda\mapsto c(\lambda)$ is one-sided differentiable:
\begin{align}
\label{eq:limit1}
    c'_+(0) = &\lim_{\lambda\rightarrow 0^+} \left(\frac{c(\lambda) - c(0)}{\lambda}\right) = \max_{\mu\in \mathcal{M}(\Omega)} \left\langle \mu, (-x,v)\cdot \nabla L(x,v)\right\rangle_\Omega,\\
    \label{eq:limit2}
    c'_-(0) =&\lim_{\lambda\rightarrow 0^-} \left(\frac{c(\lambda) - c(0)}{\lambda}\right) = \min_{\mu\in \mathcal{M}(\Omega)} \left\langle \mu,(-x,v)\cdot \nabla L(x,v) \right\rangle_\Omega.
\end{align}
\end{thm}

A similar result is established for first-order Hamilton--Jacobi equations (see \cite[Theorem 1.4]{tu_vanishing_2021}).
Theorem \ref{thm:limit} gives us that the one-sided derivatives $c'_{\pm}(\lambda)$ exists \emph{everywhere} for $\lambda\in (-\varepsilon_0,\varepsilon_0)$. We emphasize that the set of $\lambda$ where $c'(\lambda)$ does not exist is \emph{at most countable}. We refer to \cite[Theorem 4.2, Chapter 4]{bruckner_differentiation_1978} or \cite[ Theorem 17.9]{hewitt_real_1965} for this result. 

\begin{cor}\label{cor:diff_ae} Assume \ref{itm:A1} and \ref{itm:A3}. 
\begin{itemize}
    \item[$\mathrm{(i)}$] The map $\lambda\mapsto c(\lambda)$ is Lipschitz, increasing, the one-sided derivatives $c'_+(\lambda)$ and $c'_-(\lambda)$ exist. 
    \item[$\mathrm{(ii)}$] $\lambda\mapsto c'_-(\lambda)$ is left-continuous and $\lambda\mapsto c'_+(\lambda)$ is right-continuous.
    \item[$\mathrm{(iii)}$] The set of points where $\lambda 
    \mapsto c(\lambda)$ is not differentiable is almost countable.
\end{itemize}
\end{cor}

\begin{rem}\label{rem:C(gamma)} By Theorem \ref{thm:general}, for each $\gamma\in \mathbb{R}$ there exists a solution $u^\gamma \in \mathrm{C}(\overline{\Omega})$ to \eqref{eq:S_0} and this solutition. In other words, we have a well-defined map $\Lambda: \mathbb{R}\to \mathrm{C}(\overline{\Omega})$. In contrast to the first-order case, solutions to \eqref{eq:S_0} are unique up to adding a constant (\cite{lasry_nonlinear_1989}). If $\gamma\in \mathbb{R}$ then there exists a unique constant, denoted by $\mathcal{C}(\gamma) \in \mathbb{R}$ such that $u^\gamma(x) - u^0(x) = \mathcal{C}(\gamma)$ for all $x\in \overline{\Omega}$. We therefore can define $\mathcal{C}(\gamma)$ as a function from $\mathbb{R}$ to $\mathbb{R}$ by $\mathcal{C}:\R\to \R$ by $\mathcal{C}(\gamma) := u^\gamma(\cdot) - u^0(\cdot) \in \R$.
\end{rem}

\begin{thm}\label{cor:back_forth} Assume \ref{itm:A1}, and that $c'(0)$ exists, in the sense that $\theta \mapsto c_{(1+\theta)\Omega}$ has derivative at $\theta = 0$. Then $\mathcal{C}:\mathbb{R}\to\mathbb{R}$ is differentiable at $0$ with $\mathcal{C}'(0) = -c'(0)$, and $\mathcal{C}(\gamma) = -\gamma c'(0)$ for $\gamma\in \R$. In other words, $u^\gamma(\cdot) = u^0(\cdot) - \gamma c'(0)$ for all $\gamma\in \mathbb{R}$. 
\end{thm}

Theorems \ref{thm:general}, \ref{thm:limit} and Corollary \ref{cor:diff_ae} extend similar findings from \cite{tu_vanishing_2021} for the first-order case. Details on the encountered challenges and their resolution are discussed in Section \ref{subsection:contributions}.

We gain further insight into the regularity of the map $\lambda \mapsto c(\lambda)$ with more details about $f(\cdot)$. Recall that a function $u:I \to \mathbb{R}$ is semiconvex on an interval $I \subset \mathbb{R}$ if $x \mapsto u(x) + (2\tau)^{-1}|x|^2$ is convex for some $\tau > 0$.

\begin{thm}\label{lem:example} Assume \ref{itm:A1}, \ref{itm:A3} and $f = \mathrm{const}$. Then $\lambda\mapsto c(\lambda)$ belongs to $\mathrm{C}^\infty$ in its domain.
\end{thm}

\begin{thm}\label{prop:semiconvex} Assume \ref{itm:A1}, \ref{itm:A3} and $f$ is semiconcave. Then $\lambda\mapsto c(\lambda)$ is semiconvex in its domain. As a consequence, it is twice differentiable almost everywhere.
\end{thm}

Assume \ref{itm:A1}, \ref{itm:A3} and $p=2$, the Hopf-Cole transform reveals that $\lambda\mapsto c(\lambda)$ is smooth, and $c'(0)$ has an integral representation (Section \ref{sec:the quad case}), known as Hadamard's variational formula (\cite[p. 369]{Evans2010}). 
This holds for general perturbations $\Omega_\lambda = {(\mathbf{Id}+\lambda \textbf{V})(x): x\in \Omega}$, where $\textbf{V}$ is a smooth vector field $\mathbb{R}^n\to \mathbb{R}^n$. Our scaling setting represents the special case where $\textbf{V} = \mathbf{Id}$.

Theorem 1.4 and findings in Section 6, such as Theorem 1.5 and Corollary 6.1, are new and exclusive to second-order scenarios.

\subsection{Contributions}\label{subsection:contributions} We mention our contributions here, mainly in two parts:
\begin{itemize}
    \item[(i)] The technical development in Theorems \ref{thm:general}, \ref{thm:limit} and Corollary \ref{cor:diff_ae}. 
    \item[(ii)] The new connection in Theorem \ref{cor:back_forth} and observations in Theorems \ref{lem:example} and \ref{prop:semiconvex}. 
\end{itemize}

For (i), the challenge arises from the new scaling with the second-order term, along with the absence of finite speed of propagation, in contrast to the first-order case. We address this by employing the tool developed in \cite{ishii_vanishing_2017-1, ishii_vanishing_2017} and carefully tracking all conditions where the representation formula can be applied. Another potential way to study this problem is using the stochastic analogue of Aubry-Mather theory from \cite{gomes_stochastic_2002}. The advantage of the duality framework in \cite{ishii_vanishing_2017-1, ishii_vanishing_2017} is its compatibility with state-constraint boundary conditions, whereas tools from \cite{gomes_stochastic_2002} require some technical adaptations for different boundary conditions. This aspect will be addressed in future work.
\begin{itemize}
    \item For the first-order case, using the finite speed of propagation, one can replace the space of the test function as $\Phi^+(\Omega) = \mathrm{C}(\overline{\Omega}\times \R^n)$ by $\Phi^+(\Omega) = \mathrm{C}(\overline{\Omega}\times \overline{B}_h)$ for some $h > 0$. 
    \item For the second-order case, the duality framework in \cite{ishii_vanishing_2017-1, ishii_vanishing_2017} works if one restrict the space of test functions to $\Phi^+(\Omega) = \left\{tL(x,v) + \chi(x): t>0, \chi\in \mathrm{C}(\overline{\Omega})\right\}$.
\end{itemize}
It turns out that the new scaling of the equation in the second-order case works perfectly with this new space of test functions, which results in the following differences:
\begin{itemize}
    \item We must assume that the Hamiltonian is separable, as we employ $H(x,\xi) = |\xi|^p - f(x)$ in this paper. The exploration of the general Hamiltonian will be pursued in future work.
    \item In contrast to the first-order case, the scaling structure is captured by $\nabla_{(x,v)} L$ instead of just the $x$-gradient $D_xL$ as seen in the first-order case (\cite{tu_vanishing_2021}). This can be roughly explained as a variation in the scaling structure of the equation and the utilization of the space of test functions. 
\end{itemize}

For (ii), this is a new result that is only available in the second-order case. The result also contains a connection to the problem of characterization of minimizing measures, i.e., how all minimizing measures can be approximated. 
At the moment, we cannot show the differentiability of this map yet unless some special cases, as in Theorems \ref{lem:example} or the case where $p=2$ (Corollary \ref{thm:p=2}).

\begin{rem} According to the \cite{ishii_vanishing_2017-1, ishii_vanishing_2017}, the dependence of minimizing measures $\mathcal{M}(\Omega)$ on the choice of the space of test functions is unknown. This creates a difference in the limiting procedure as $\varepsilon$ approaches $0$ (in the first-order case) in representations \eqref{eq:limit1}, \eqref{eq:limit2}, \eqref{eq:maxM_gamma} when a different space of test functions is used (as in this paper and \cite{tu_vanishing_2021}). This aspect will be considered in future work. 
\end{rem}

\begin{rem} The problem of characterizing Mather measures has been studied, as in \cite{gomes_mather_2010}. One approach to proving their existence is through approximation from a related vanishing discount problem. A key question arises: can all viscosity Mather measures be described by $\bigcup_{\gamma\in \R} \mathcal{U}_\gamma(\Omega)$ (Definition \ref{def:solve_measures})?
\end{rem}

\subsection{Literature} 
A general form of \eqref{eq:main} is studied in \cite{ishii_vanishing_2017-1, ishii_vanishing_2017}, where the authors develop viscosity Mather Measures using duality. 
A different notion of viscosity Mather measures for second-order equation (in the periodic setting) defined using a stochastic analogue of the Aubry-Mather theory has been studied earlier in \cite{gomes_stochastic_2002}. 
In contrast to the first-order case, solutions to the ergodic problem \eqref{eq:S_0} are unique up to adding a constant (see \cite{lasry_nonlinear_1989}). This makes the vanishing discount problem on a fixed domain simpler to show, and it was shown in \cite{lasry_nonlinear_1989} using a pure PDE approach. The first-order case is harder, and it is a non-trivial problem to characterize the limiting solution (\cite{davini_convergence_2016, ishii_vanishing_2017-1, ishii_vanishing_2017, tran_hamilton-jacobi_2021}).

If $p>2$, there exists a unique bounded viscosity solution (\cite{armstrong_viscosity_2015, lasry_nonlinear_1989}) $u\in \mathrm{C}(\overline{\Omega})$ that is H\"older continuous, and it is a maximal solution in $W^{2,r}_{\mathrm{loc}}(\Omega)$ (for all $1<r<\infty$) of
\begin{equation}\label{eq:intro_eq}
    \delta u(x) + |Du(x)|^p - f(x) - \varepsilon \Delta u(x) = 0     \qquad\text{in}\;\Omega.
\end{equation}
When $1<p \leq 2$, equation \eqref{eq:main} can be written as (see \cite{lasry_nonlinear_1989}) \eqref{eq:intro_eq} coupled with $=+\infty$ on $\partial\Omega$, and solutions can be understood in the classical sense. When $p>2$, a solution is uniformly bounded up to $\partial\Omega$, and the equation is understood in the viscosity sense (\cite{armstrong_viscosity_2015, BARLES201031, lasry_nonlinear_1989}). We refer the readers to \cite{alessio_asymptotic_2006} (elliptic problem with subquadratic growth), \cite{attouchi_gradient_2020, barles_generalized_2004, barles_large_2010, tchamba_2010} (time-dependent problem), \cite{armstrong_viscosity_2015, lasry_nonlinear_1989} (elliptic problem) and the references therein for more properties of solutions and related problems. The rate of convergence as $\varepsilon\to 0$ is studied in \cite{YuTu2021large}.  

The vanishing discount problem on fixed domain concerns the convergence of solutions is considered in \cite{ishii_vanishing_2020} for the case, the domain is $\mathbb{R}^n$, \cite{davini_convergence_2016,ishii_vanishing_2017-1, mitake_selection_2017,tran_hamilton-jacobi_2021, zhang_limit_2022} for the case the domain is torus $\mathbb{T}^n = \mathbb{R}^n/ \mathbb{Z}^n$, and \cite{ishii_vanishing_2017} for a general domain with various boundary conditions. We refer the readers also to \cite{wang_convergence_2021} for a vanishing discount problem with more general nonlinear dependence on the monotone term. The case of changing domain for $\varepsilon=0$ is studied in \cite{tu_vanishing_2021}.

\subsection{Organization of the paper} The paper is organized in the following way. In Section \ref{sec:Prelim}, we provide the background results on state-constraint Hamilton--Jacobi equations, estimates on solutions, the duality representation, definitions, and properties of minimizing measures that will be needed throughout the paper. The proof of Theorem \ref{thm:general} is provided in Section \ref{sec:The vanishing discount problem}. Section \ref{sec:c'(0)} is devoted to proving the main result of Theorem \ref{thm:limit} and its consequences on getting the estimate on the nested domain. The connection between $c'(0)$ and $\mathcal{C}'(0)$ as in Theorem \ref{cor:back_forth} is discussed in Section \ref{sec:ergodic problem and the differentiability}. The last section records some discussions (Theorems \ref{lem:example}, \ref{prop:semiconvex}) and open questions, as well as future research related to the theme of the paper. In particular, we investigate a very special case where $p=2$ (it falls into the subquadratic case $1<p\leq 2$) using a tool from elliptic theory, which provides a different perspective on the derivative $c'(0)$ of the additive eigenvalue.

\section{Preliminaries}\label{sec:Prelim}
We say $u$ solves \eqref{eq:reference_problem} if it solves the state-constraint problem on $\mathcal{O}$ with discount factor $\delta\geq 0$ as in Definition \ref{defn:solve}. We make the assumption regarding the geometry of $\Omega$ as specified in \ref{itm:A1}.



\subsection{State-constraint solutions and the representation formula}
Proofs for certain results in this section, as well as background information on viscosity solutions, can be found in Appendix~\ref{ap:a}.

\begin{defn}\label{defn:0} We say that $u$ is a viscosity state-constraint solution of \eqref{eq:main} if $u$ is a subsolution of the equation in $\Omega$ and is a supersolution of the equation on $\overline{\Omega}$.
\end{defn}

The state-constraint problem was first studied in \cite{soner_optimal_1986-1} for the first-order equation using the \emph{interior sphere condition}. Assumption \ref{itm:A1} is introduced in \cite{capuzzo-dolcetta_hamiltonjacobi_1990} as a simpler way to obtain uniqueness.
The well-posedness for \eqref{eq:main} and the properties of its solution can be summarized as follows. 

\begin{thm}\label{thm:Holder_main} Assume \ref{itm:A1} and $\lambda\geq 0$. If $u$ is a viscosity solution to \eqref{eq:main} then $u \in \mathrm{C}^{0,\alpha}(\overline{\Omega})$ where $\alpha = (p-2)/(p-1)$. Furthermore:
\begin{itemize}
    \item[(i)] If $\lambda > 0$ then there exists a unique continuous viscosity solution $u$, and it is the maximal subsolution of $\lambda u + H(x,Du) - \varepsilon \Delta u = 0$ in $\Omega$ in the class of upper semicontinuous subsolution.
    \item[(ii)] If $\lambda = 0$, then up to adding a constant, there exists a unique continuous viscosity solution. 
\end{itemize}
\end{thm}

\begin{thm}[Comparison principle]\label{thm:comparison_main} Assume \ref{itm:A1} and $\lambda>0$. Let $u,v\in \mathrm{C}(\overline{\Omega})$ be a viscosity subsolution in $\Omega$ and supersolution on $\overline{\Omega}$ of $\lambda u + H(x,Du) - \varepsilon \Delta u = 0$, respectively. Assume that $u,v$ are H\"older continuous, i.e., belong to $\mathrm{C}^{0,\alpha}(\overline{\Omega})$ for some $\alpha \in (0,1]$, then $u(x)\leq v(x)$ for all $x\in \overline{\Omega}$.
\end{thm}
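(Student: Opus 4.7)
The plan is the classical dilation-plus-doubling strategy: use the star-shape assumption \ref{itm:A1} to shrink $u$ inward into a function $u_\eta$ satisfying a subsolution inequality on an open set strictly containing $\overline\Omega$, thereby turning the state-constraint boundary issue into a plain interior comparison to which the Crandall--Ishii machinery applies; then send the dilation parameter $\eta$ to zero.

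More concretely, for $\eta\in(0,1)$ I would set $V_\eta:=(1-\eta)^{-1}\Omega$ and $u_\eta(x):=u((1-\eta)x)$ on $\overline{V_\eta}$. By \ref{itm:A1} and \eqref{condA2}, $V_\eta$ is an open neighborhood of $\overline\Omega$ with $\dist(\overline\Omega,\partial V_\eta)\ge\kappa\eta/(1-\eta)>0$, and by Hölder continuity $u_\eta\in\mathrm{C}^{0,\alpha}(\overline{V_\eta})$ with $u_\eta\to u$ uniformly on $\overline\Omega$ as $\eta\to 0^+$. A change of variables at the test-function level shows $u_\eta$ is a viscosity subsolution in $V_\eta$ of the modified equation
\[
\lambda w+(1-\eta)^{-p}|Dw|^p-f((1-\eta)\,\cdot\,)-\varepsilon(1-\eta)^{-2}\Delta w\le 0,
\]
whose coefficients tend to those of the original equation as $\eta\to 0^+$. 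For each fixed $\eta>0$ one then runs the Crandall--Ishii doubling of variables on $\overline{V_\eta}\times\overline\Omega$ with quadratic penalty $\sigma|x-y|^2$ between $u_\eta$ and $v$: the maximiser $(x_\sigma,y_\sigma)$ clusters on the diagonal of $\overline\Omega$ as $\sigma\to\infty$, so eventually $x_\sigma$ lies strictly inside $V_\eta$ and the subsolution property of $u_\eta$ there is legitimately available, while $v$'s supersolution property holds on all of $\overline\Omega$ by Definition \ref{defn:0}. Combining the two inequalities through the matrix lemma and letting $\sigma\to\infty$ and then $\eta\to 0^+$ should deliver $u\le v$ on $\overline\Omega$.

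The main obstacle is the second-order scaling factor $(1-\eta)^{-2}$ multiplying $\Delta w$ in the modified equation: together with the Crandall--Ishii matrices $X_\sigma,Y_\sigma$ (whose norms are of order $\sigma$), it produces an error term $\varepsilon[(1-\eta)^{-2}-1]\tr(X_\sigma)$ that is not automatically small for fixed $\eta$ as $\sigma\to\infty$. The remedy relies on the Hölder regularity with exponent $\alpha=(p-2)/(p-1)$, which via the usual penalization estimate $\sigma|x_\sigma-y_\sigma|^2\le C|x_\sigma-y_\sigma|^\alpha$ yields $|x_\sigma-y_\sigma|\le(C/\sigma)^{(p-1)/p}$ and hence $|D\varphi(x_\sigma)|=2\sigma|x_\sigma-y_\sigma|\le C\sigma^{1/p}$; this bound, combined with the superquadratic coercivity $p>2$, allows one to absorb the second-order error on the gradient side after a careful joint tuning of $\sigma$ and $\eta$ (for example ensuring $\sigma\eta\to 0$ before passing to the limit). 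This interplay between $p>2$, the Hölder exponent $\alpha$, and the dilation parameter $\eta$ is precisely what makes the superquadratic second-order state-constraint comparison go through.
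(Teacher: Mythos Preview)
Your overall strategy (dilate inward, then doubling) is the same as the paper's, and you correctly identify the key obstacle as the second-order term. However, your proposed remedy has a genuine gap. You estimate the diffusion error as $\varepsilon[(1-\eta)^{-2}-1]\tr(X_\sigma)\sim \eta\sigma$ and suggest tuning so that $\sigma\eta\to 0$. But the interior requirement $x_\sigma\in V_\eta$ forces, via your own H\"older bound $|x_\sigma-y_\sigma|\le (C/\sigma)^{1/(2-\alpha)}$, the constraint $\sigma\gtrsim \eta^{-(2-\alpha)}$; since $2-\alpha>1$, this is incompatible with $\sigma\eta\to 0$. The gradient bound $|D\varphi|\le C\sigma^{1/p}$ is an \emph{upper} bound and cannot be used to absorb anything.

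The point you are missing is that the diffusion error is actually $O(\sigma\eta^2)$, not $O(\sigma\eta)$. Rather than splitting $[(1-\eta)^{-2}-1]\tr X+(\tr X-\tr Y)$, test the Ishii matrix inequality with the asymmetric pair $\big((1-\eta)^{-1}\xi,\xi\big)$: since the right-hand side is $C\sigma|\zeta_1-\zeta_2|^2$, one gets
\[
(1-\eta)^{-2}\xi^{T}X_\sigma\xi-\xi^{T}Y_\sigma\xi\;\le\; C\sigma\Big(\tfrac{\eta}{1-\eta}\Big)^{2}|\xi|^{2}.
\]
With this quadratic cancellation the coupling $\sigma=C_1\eta^{-(2-\alpha)}$ (valid for any $\alpha\in(0,1]$, not just $(p-2)/(p-1)$) simultaneously gives $|x_\sigma-y_\sigma|\le (\kappa/2)\eta$ and a second-order error of size $C\eta^\alpha\to 0$. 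The paper (Theorem~\ref{CP continuous} and Lemma~\ref{lem:ishii}) implements exactly this idea by building the dilation directly into the penalty, using
\[
\Phi(x,y)=u(x)-v(y)-\frac{C_1|(1+\theta)x-y|^{2}}{\theta^{2-\alpha}},
\]
so that no rescaling of $u$ is needed, the Hamiltonian contribution $(1-(1+\theta)^p)\mathcal{H}\le 0$ is automatically favorable, and the Hessian of the penalty satisfies $(\xi,\xi)\mathbf{A}(\xi,\xi)^{T}=C_1\theta^{\alpha}|\xi|^2$ thanks to the factor $((1+\theta)-1)^2=\theta^2$.
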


An important consequence is the following bound (consequence from 
Proposition \ref{pro:bound_on_sln}).

\begin{prop}\label{pro:bound_on_sln_main}
Assume \ref{itm:A1} and $\lambda > 0$. The solution $u_\lambda$ of \eqref{eq:main} satisfies 
\begin{itemize}
    \item[(i)] $-\max_{x\in \overline{\Omega}}H(x,0) \leq \lambda u_\lambda(x)$ for $x\in \overline{\Omega}$,
    \item[(ii)] if $B(0,\kappa) \subset \Omega$ then $\lambda u_\lambda(x) \leq C(\kappa)$ for $x\in \overline{B(0,\kappa)}$, where $C(\kappa)$ depends only on $\kappa$. 
\end{itemize}
\end{prop}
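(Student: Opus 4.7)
The plan is to use a constant subsolution and invoke the comparison principle. Setting $m := -\max_{\overline\Omega} H(x,0) = \min_{\overline\Omega} f$, the constant function $w \equiv m/\lambda$ is smooth with $Dw \equiv 0$, $\Delta w \equiv 0$, and satisfies
\begin{equation*}
\lambda w + H(x, Dw) - \varepsilon \Delta w = m - f(x) \leq 0 \quad \text{in } \Omega,
\end{equation*}
so $w$ is a classical (hence viscosity) subsolution on $\Omega$. Since $u_\lambda$ is a H\"older continuous viscosity supersolution on $\overline\Omega$ by Theorem~\ref{thm:Holder_main}, Theorem~\ref{thm:comparison_main} gives $w \leq u_\lambda$ on $\overline\Omega$, which establishes (i).

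\textbf{Part (ii).} My plan for the upper bound is to construct a local radial barrier which becomes infinite on a sphere of radius slightly larger than $\kappa$, exploiting the superlinearity $p > 2$ so that the gradient term $|D\Phi|^p$ in the equation dominates the Laplacian $\varepsilon \Delta\Phi$ near the singularity. Using $B(0,\kappa) \subset\subset \Omega$ from \ref{itm:A1}, pick $R > \kappa$ with $B(0,R) \subset \Omega$ and set
\begin{equation*}
\Phi(x) := M + \alpha (R^2 - |x|^2)^{-\sigma}, \qquad x \in B(0,R),
\end{equation*}
with parameters $\sigma, \alpha, M > 0$ to be chosen. A direct computation yields
\begin{equation*}
|D\Phi(x)|^p = (2\alpha\sigma)^p |x|^p (R^2-|x|^2)^{-p(\sigma+1)},
\end{equation*}
while $\varepsilon \Delta\Phi(x)$ is of order $(R^2 - |x|^2)^{-\sigma-2}$ as $|x| \to R$. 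Since $p > 2$ implies $p(\sigma+1) > \sigma+2$ for every $\sigma > 0$, the gradient dominates the Laplacian near $\partial B(0,R)$. Fixing, say, $\sigma = 1$ and then $\alpha$ so that $|D\Phi|^p \geq \varepsilon\Delta\Phi$ on the annulus $R/2 < |x| < R$ (a choice depending only on $\varepsilon, p, n, R$), all radial quantities are bounded on $B(0,R/2)$ by a constant $K = K(\alpha, \sigma, \varepsilon, p, n, R)$. Choosing $M$ so that $\lambda M = \|f\|_{L^\infty(B(0,R))} + K$ makes $\Phi$ a classical supersolution of $\lambda u + H(x,Du) - \varepsilon\Delta u = 0$ throughout $B(0,R)$, with $\lambda M$ independent of $\lambda$.

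Localized comparison then concludes. Because $u_\lambda \in \mathrm{C}(\overline\Omega)$ is bounded and $\Phi(x) \to +\infty$ as $|x| \to R$, the upper-semicontinuous function $u_\lambda - \Phi$ attains its maximum over $\overline{B(0,R)}$ at an interior point $x_0 \in B(0,R)$. Using the smooth $\Phi$ as a test function in the viscosity subsolution inequality for $u_\lambda$ at $x_0$ and subtracting the classical supersolution inequality for $\Phi$ at $x_0$ gives $\lambda(u_\lambda(x_0) - \Phi(x_0)) \leq 0$, whence $u_\lambda \leq \Phi$ on $B(0,R)$. Restricting to $\overline{B(0,\kappa)}$ where $R^2 - |x|^2 \geq R^2 - \kappa^2$ yields
\begin{equation*}
\lambda u_\lambda(x) \;\leq\; \lambda M + \lambda\alpha (R^2-\kappa^2)^{-\sigma},
\end{equation*}
and combining this with the trivial global bound from a constant supersolution in the large-$\lambda$ regime collapses the right-hand side into a constant $C(\kappa)$ that depends only on $\kappa$ together with the fixed data $f, \varepsilon, p, n$.

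The main obstacle I expect is verifying that $\Phi$ is a classical supersolution uniformly across all of $B(0,R)$: the gradient-vs-Laplacian domination is clean near the singularity and the $\lambda M$ absorption of $\|f\|_\infty + K$ is clean in the interior, but the matching in the intermediate annulus requires a careful balancing of the powers of the singular weight $(R^2 - |x|^2)$ against the coefficients $\alpha, M$, and a tracking of all constants to ensure dependence only on $\kappa$ and the fixed data.
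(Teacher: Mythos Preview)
Your Part (i) is exactly the paper's argument.

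Your Part (ii) is correct but takes a different route from the paper. The paper avoids any explicit barrier: it invokes the well-posedness theory to obtain the state-constraint solution $v$ on the smaller ball $B(0,\kappa)$ with discount factor $1$, then observes that $w = v + (1+\lambda^{-1})\|v\|_{L^\infty}$ is a state-constraint supersolution for the $\lambda$-problem on $B(0,\kappa)$, and applies Theorem~\ref{thm:comparison_main} there. This is a two-line argument once existence is known. Your approach instead builds an explicit Lasry--Lions-type radial barrier blowing up on $\partial B(0,R)$ and uses an elementary interior-maximum argument in place of the state-constraint comparison. The paper's proof is shorter and leverages the existence theory as a black box; yours is more self-contained and makes the dependence on $p>2$ transparent, at the cost of the computations you flag in your final paragraph.

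One small correction: your remark about a ``trivial global bound from a constant supersolution in the large-$\lambda$ regime'' does not quite work, because constants are not state-constraint supersolutions on $\overline{\Omega}$ (the supersolution test at boundary points allows test functions with nonzero gradient). This is harmless, though: in the vanishing-discount setting one only needs $\lambda$ bounded, say $\lambda\le 1$, and then the term $\lambda\alpha(R^2-\kappa^2)^{-\sigma}$ is already controlled by a constant depending only on $\kappa$ and the fixed data. The paper's own bound $\lambda u_\lambda \le (2\lambda+1)\|v\|_{L^\infty}$ has the same feature.
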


We employ the representations in \cite{ishii_vanishing_2017-1, ishii_vanishing_2017} (see also \cite[Theorem 6.3]{tran_hamilton-jacobi_2021}). We note that a different representation not using duality is given in \cite{gomes_stochastic_2002} for the periodic setting. 

We denote by $\mathcal{P}$ the set of all probability measures, and likewise, if $E$ is a metric space, we denote by $\mathcal{P}_E$ the set of all probability measures on $E$. The set of all Radon measures (resp., nonnegative Radon measures) in $E$ is denoted by $\mathcal{R}(E)$ (resp., $\mathcal{R}^+(E)$).

\begin{defn}\label{def:cone_main} Let us define 
\begin{equation*}
    \Phi^+(\overline{\Omega}\times \R^n) := \left\lbrace \phi\in \mathrm{C}(\overline{\Omega}\times \R^n): \phi(x,v) = tL(x,v) + \chi(x), t>0, \chi\in \mathrm{C}(\overline{\Omega})\right\rbrace.
\end{equation*}
For each $\phi\in \Phi^+(\overline{\Omega}\times\R^n)$, define $H_\phi(x,\xi) = \sup_{v\in \mathbb{R}^n}\left(\xi\cdot v - \phi(x,v)\right)$ for $(x,\xi)\in \overline{\Omega}\times\R^n$. For $\delta\geq 0$ and $z\in \overline{\Omega}$ we define
\begin{align*}
    \mathcal{F}_{\delta,\Omega} 
        &= \Big\lbrace (\phi,u) \in \Phi^+(\overline{\Omega}\times\R^n)\times \mathrm{C}(\overline{\Omega}): \delta u + H_\phi(x,Du) - \varepsilon \Delta u \leq 0\;\text{in}\;\Omega \Big\rbrace,\\
    \mathcal{G}_{z,\delta,\Omega} 
        &= \Big\lbrace \phi - \delta u(z): (\phi, u)\in \mathcal{F}_{\delta,\Omega} \Big\rbrace,\\
    \mathcal{G}'_{z,\delta,\Omega} 
        &= \Big\lbrace \mu\in \mathcal{R}(\overline{\Omega}\times \R^n): \langle \mu, g\rangle_\Omega \geq 0\;\text{for all}\;g\in \mathcal{G}_{z,\delta,\Omega} \Big\rbrace.
\end{align*}
When $\delta = 0$, we note that $\mathcal{G}_{z,0,\Omega}$ and $\mathcal{G}'_{z,0,\Omega}$ are independent of $z\in \overline{\Omega}$. Therefore we denote 
\begin{equation*}
    \mathcal{G}_{0,\Omega}:=\mathcal{G}_{z,0,\Omega}
    \qquad\text{and}
    \qquad 
    \mathcal{G}'_{0,\Omega}:=
    \mathcal{G}'_{z,0,\Omega}\qquad \text{for all}\;z\in \overline{\Omega}.
\end{equation*}
Precisely, we have 
\begin{align*}
    \mathcal{G}_{0,\Omega} 
    &= \Big\lbrace \phi\in \Phi^+(\overline{\Omega}\times \mathbb{R}^n):\;\text{there exists}\;u\in \mathrm{C}(\overline{\Omega}): H_\phi(x, Du) - \varepsilon \Delta u \leq 0\;\text{in}\;\Omega \Big\rbrace,\\
    \mathcal{G}'_{0,\Omega} 
    &= \Big\lbrace \mu \in \mathcal{R}(\overline{\Omega}\times\mathbb{R}^n): \langle \mu, g\rangle_\Omega\geq 0\;\text{for all}\;g\in \mathcal{G}_{0,\Omega}\Big\rbrace.
\end{align*}
\end{defn}
We observe that $\Phi^+(\overline{\Omega}\times \R^n) $ is a convex cone in $\mathrm{C}(\overline{\Omega}\times \R^n)$ and $(x,\xi)\mapsto H_\phi(x,\xi)$ is well-defined and continuous for $\phi\in \Phi^+(\overline{\Omega}\times \R^n)$.

\begin{thm}[{\cite[Theorem 3.3]{ishii_vanishing_2017}}]\label{thm:repdelta>0_main} Assume \ref{itm:A1}. Let $(z,\lambda) \in \overline{\Omega}\times (0,\infty)$ and $u_\lambda\in \mathrm{C}(\overline{\Omega})$ be a solution of \eqref{eq:main}. Then for $\lambda > 0$ there holds
\begin{align}
    \lambda u_\lambda(z) &= \min_{\mu\in \mathcal{P}\cap \mathcal{G}'_{z,\lambda,\Omega}}  \langle\mu, L\rangle_\Omega \label{eq:representation_main-lambda} \\
    -c_\Omega &= \min_{\mu\in \mathcal{P}\cap \mathcal{G}'_{0, \Omega}} \langle \mu, L\rangle_\Omega. \label{eq:representation_main-0}
\end{align}
\end{thm}

\begin{defn}[Viscosity Mather measures, {\cite{ishii_vanishing_2017-1, ishii_vanishing_2017}}]\label{def:M_0} We define 
    \begin{equation}\label{eq:M0}
        \mathcal{M}(\Omega): = \left\lbrace \mu \in \mathcal{P}\cap \mathcal{G}'_{0,\Omega}: -c_\Omega = \langle\mu, L\rangle_\Omega \right\rbrace.
    \end{equation}
\end{defn}

We will use Definition \ref{def:cone_main} mostly in the following way. 
\begin{cor}\label{cor:usage-measures} Let $\phi \in \Phi^+(\overline{\Omega}\times\mathbb{R}^n)$ and $\delta\geq 0$, $z\in \overline{\Omega}$ and $u\in \mathrm{C}(\overline{\Omega})$ such that 
\begin{equation*}
    \delta u + H_\phi(x,Du) - \varepsilon\Delta u \leq 0 \;\text{in}\;\Omega .
\end{equation*}
\begin{itemize}
    \item[$\mathrm{(i)}$] If $\mu_\delta \in \mathcal{G}'_{z,\delta,\Omega} $ then $\langle \mu_\delta, \phi-\delta u(z)\rangle_\Omega \geq 0$. 
    \item[$\mathrm{(ii)}$] If $\mu \in \mathcal{G}'_{0,\Omega}$ then $ \langle \mu, \phi-\delta u\rangle_\Omega \geq 0$.
\end{itemize}
\end{cor}
\begin{proof} The conclusion follows from $H_{\phi-\delta u}(x,Du) - \varepsilon \Delta u = \delta u + H_\phi(x,Du) - \varepsilon \Delta u \leq 0$.
\end{proof}

\subsection{Settings for problems on changing domains}

\begin{defn}\label{defn:solve} For an open subset $\mathcal{O}\subset \R^n$ and $\delta>0$, we say $u\in \mathrm{C}(\mathcal{O})$ solves \eqref{eq:reference_problem} if $u$ solves 
\begin{equation}\label{eq:reference_problem}\tag{$\delta, \mathcal{O}$}
    \begin{cases}
        \delta u(x) + |Du(x)|^p - f(x) - \varepsilon\Delta u(x) \leq 0 \qquad\text{in}\;\mathcal{O},\\
        \delta u(x) + |Du(x)|^p - f(x) - \varepsilon\Delta u(x) \geq 0 \qquad\text{on}\;\overline{\mathcal{O}}
    \end{cases}
\end{equation}
If $\delta = 0$, we say $u\in \mathrm{C}(\mathcal{O})$ solves \eqref{eq:reference_problem_ergodic} if $u$ solves 
\begin{equation}\label{eq:reference_problem_ergodic}\tag{$0, \mathcal{O}$}
    \begin{cases}
        |Du(x)|^p - f(x) - \varepsilon\Delta u(x) \leq c(\mathcal{O}) \qquad\text{in}\;\mathcal{O},\\
        |Du(x)|^p - f(x) - \varepsilon\Delta u(x) \geq c(\mathcal{O}) \qquad\text{on}\;\overline{\mathcal{O}}
    \end{cases}
\end{equation}
where $c(\mathcal{O})$ is the additive eigenvalue on $\mathcal{O}$, as defined in \eqref{eq:c(0)}.
\end{defn}

\begin{lem}\label{lem:bound_delta_u_delta_eig} Let $\mathcal{O}\subset \mathbb{R}^n$ be a bounded domain, and assume \ref{itm:A1} for $\mathcal{O}$. 
\begin{itemize}
    \item[$\mathrm{(i)}$] For $\delta>0$, the unique solution $u_\delta$ to \eqref{eq:reference_problem} satisfies 
    \begin{equation*}
        \left|\delta u_\delta(x) + c(\mathcal{O})\right| \leq C_2\delta \qquad\text{for all}\;\delta>0, x\in \overline{\mathcal{O}},
    \end{equation*}
    where $C_2$ depends only on $\mathrm{diam}(\mathcal{O})$, $\Vert f\Vert_{L^\infty(\mathcal{O})}$ and $p$.
    \item[$\mathrm{(ii)}$] There holds $|c(\mathcal{O})|\leq C$ where $C$ depends only on $\min_{\overline{\mathcal{O}}}f$ on $U$ and $\kappa$ as defined in \ref{itm:A1}.
\end{itemize}
\end{lem}
\begin{proof} Thanks to the H\"older estimate of $u_\delta$ in Theorem \ref{thm:Holder_main}, we have 
\begin{equation*}
    u_\delta(x) - u_\delta(0) \to w(x), \qquad \delta u_\delta(x)\to -c(\mathcal{O})
\end{equation*}
uniformly on $\mathcal{O}$ and $w$ solves the ergodic problem \eqref{eq:reference_problem_ergodic} with the bound $|w(x)|\leq C|x|^\alpha \leq C$, for all $x\in \mathcal{O}$ where $C=C\big(\mathrm{diam}(\mathcal{O}),\Vert f\Vert_{L^\infty(\mathcal{O})},p\big)$. Now $w(\cdot)+\delta^{-1}c(\mathcal{O})$ and $w(\cdot)-\delta^{-1}c(\mathcal{O})$ are supersolution and subsolution to \eqref{eq:reference_problem}, respectively. By comparison principle we deduce that $\left|\delta u_\delta(\cdot) + c(\mathcal{O})\right|\leq \delta \Vert w \Vert_{L^\infty(\mathcal{O})}$. 

For (ii), let $u_\delta\in \mathrm{C}(\mathcal{O})$ solves \eqref{eq:reference_problem}, i.e., the state-constraint problem with discount factor $\delta$ on $\mathcal{O}$. By Proposition \ref{pro:bound_on_sln_main}, $|\delta u_\delta(0)| \leq C$ where $C$ depends only on $\min \{f(x):x\in \overline{U}\}$ and $\kappa$. Let $\delta \to 0^+$ we deduce that $|c(\mathcal{O})|\leq C$.
\end{proof}

We obtain an improved bound for $\delta u_\delta(\cdot)$ that for solution $u_\delta(\cdot)$ of \eqref{eq:reference_problem}.
\begin{cor}\label{cor:improved_bounds} Let $\mathcal{O}\subset \mathbb{R}^n$ be a bounded domain, and assume \ref{itm:A1} for $\mathcal{O}$. There exists a constant $C$ that depends only on $\min_{\overline{U}} f$ and $\kappa$ such that:
\begin{itemize}
    \item[(i)] If $|r(\lambda)|$ is small so that $B(0,\kappa)\subset \Omega_\lambda$ then $|c_{\Omega_\lambda}|\leq C$.
    \item[(ii)] If $u_\delta(\cdot)$ solves \eqref{eq:reference_problem} then $|\delta u_\delta(\cdot)| \leq C+C_2\delta$, where $C_2$ is defined in Lemma \ref{lem:bound_delta_u_delta_eig}.
\end{itemize}
\end{cor}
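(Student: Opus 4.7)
The plan is to prove both parts by combining Lemma \ref{lem:bound_delta_u_delta_eig} and Lemma \ref{lem:boundedness_c(lambda)}, once we have verified that the scaled family $\{\Omega_\lambda\}$ satisfies \ref{itm:A1} uniformly in $\lambda$ (for $|r(\lambda)|$ small). The main preparatory step, and the only slightly non-trivial one, is to check this uniform geometric bound; everything else is just the triangle inequality.

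For the uniform geometric check, I would write $\alpha := 1 + r(\lambda)$ and use that $\overline{\Omega}_\lambda = \alpha \overline{\Omega}$, so $\mathrm{dist}(y, \overline{\Omega}_\lambda) = \alpha\, \mathrm{dist}(y/\alpha, \overline{\Omega})$ for every $y \in \R^n$. Given $y \in (1+s)\partial \Omega_\lambda$ with $s > 0$, one has $y/\alpha \in (1+s)\partial \Omega$, and \ref{itm:A1} applied to $\Omega$ yields $\mathrm{dist}(y/\alpha, \overline{\Omega}) \geq \kappa s$. Consequently $\mathrm{dist}(y, \overline{\Omega}_\lambda) \geq \alpha \kappa s \geq (\kappa/2) s$ as soon as $|r(\lambda)| \leq 1/2$. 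Together with the hypothesis $B(0,\kappa) \subset \Omega_\lambda$ and the (obvious) star-shapedness of $\Omega_\lambda$ with respect to the origin, this shows $\Omega_\lambda$ satisfies \ref{itm:A1} with the uniform constant $\kappa' = \kappa/2$.

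For part (i), I would then apply Lemma \ref{lem:boundedness_c(lambda)} to $\mathcal{O} = \Omega_\lambda$. Since $\Omega_\lambda \subset U$ for $|r(\lambda)|$ small, $\min_{\overline{\Omega}_\lambda} f \geq \min_{\overline{U}} f$, and Lemma \ref{lem:boundedness_c(lambda)} gives $|c(\lambda)| = |c_{\Omega_\lambda}| \leq C$ with $C$ depending only on $\min_{\overline{U}} f$ and $\kappa$. For part (ii), Lemma \ref{lem:bound_delta_u_delta_eig} applied to $\mathcal{O}$ gives $|\delta u_\delta(x) + c_\mathcal{O}| \leq C_2 \delta$ for every $x\in \overline{\mathcal{O}}$, while Lemma \ref{lem:boundedness_c(lambda)} (applied with the same uniform $\kappa'$ if $\mathcal{O} = \Omega_\lambda$, otherwise with $\mathcal{O}$'s own data) yields $|c_\mathcal{O}| \leq C$. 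The triangle inequality then gives
\begin{equation*}
    |\delta u_\delta(x)| \leq |c_\mathcal{O}| + |\delta u_\delta(x) + c_\mathcal{O}| \leq C + C_2 \delta,
\end{equation*}
as desired.

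I do not foresee a substantive obstacle: once the uniform version of \ref{itm:A1} for $\Omega_\lambda$ is recorded, the rest is immediate. The only point requiring care is that the constant $C$ depends only on $\min_{\overline{U}} f$ and $\kappa$ rather than on $\lambda$, which is exactly what the uniform scaling argument above provides.
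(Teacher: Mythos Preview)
Your proposal is correct and matches the paper's approach: the paper gives no explicit proof for this corollary, treating it as an immediate consequence of Lemma~\ref{lem:bound_delta_u_delta_eig} and Lemma~\ref{lem:boundedness_c(lambda)} via the triangle inequality, which is exactly what you do. Your explicit verification that $\Omega_\lambda$ inherits \ref{itm:A1} with a uniform constant $\kappa' = \kappa/2$ for $|r(\lambda)|\leq 1/2$ is a detail the paper leaves implicit but is indeed needed to ensure the constant $C$ is independent of $\lambda$.
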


The error estimate for discount solutions on slightly different domains implies that the map $\lambda\mapsto c(\lambda)$ is H\"older continuous with degree $\alpha = (p-2)/(p-1)$. However, it could be of interest on its own, as seen in the setting of \cite{kim_state-constraint_2020}. We provide the proof in Appendix~\ref{ap:c} for completeness.

\begin{thm}[Nested domains estimate]\label{thm:Holder_est_nested_domain} Let $\mathcal{O}\subset \mathbb{R}^n$ be a bounded domain, and assume \ref{itm:A1} for $\mathcal{O}$. Let $\theta >0$, $0<\delta < 1$ and $u \in \mathrm{C}((1+\theta)\overline{\Omega})$ and $v\in \mathrm{C}(\overline{\Omega})$ be solutions to $(\delta, (1+\theta)\Omega)$ and $(\delta, \Omega)$, respectively. Then, it holds that
\begin{equation*}
    0 \leq \delta  v(x) -  \delta u(x) \leq C\theta ^\alpha, \qquad \text{for all}\; x\in \overline{\Omega}.
\end{equation*}
where $\alpha = (p-2)/(p-1)$ and $C$ depends on $\mathrm{diam}(\Omega)$, the Lipschitz constant of $f$ on $U$ and $p$.
\end{thm}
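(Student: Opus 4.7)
The proof splits into the lower bound $\delta v - \delta u \geq 0$ and the upper bound $\delta v - \delta u \leq C\theta^\alpha$ on $\overline{\Omega}$.

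The lower bound is immediate from the state-constraint comparison principle (Theorem~\ref{thm:comparison_main}). Since $u$ solves $(\delta,(1+\theta)\Omega)$, its restriction to $\overline{\Omega}$ is a subsolution in $\Omega\subset(1+\theta)\Omega$ (being a subsolution on the larger open set), while $v$ is a state-constraint supersolution on $\overline{\Omega}$. Both functions belong to $\mathrm{C}^{0,\alpha}(\overline{\Omega})$ by Theorem~\ref{thm:Holder_main}, so the comparison yields $u\leq v$ on $\overline{\Omega}$, i.e.\ $\delta v\geq\delta u$.

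For the upper bound, the plan is to compare $v$ with the rescaled function $\tilde u(x):=u((1+\theta)x)$ on $\overline{\Omega}$, and then transfer the estimate via the H\"older regularity of $u$. A direct chain-rule computation shows that $\tilde u$ is a state-constraint solution on $\Omega$ of the perturbed equation
\[
    (1+\theta)^p\,\delta\,\tilde u+|D\tilde u|^p-(1+\theta)^p f\bigl((1+\theta)x\bigr)-\varepsilon(1+\theta)^{p-2}\,\Delta\tilde u=0,
\]
whose data differ from those of $v$'s equation by perturbations of order $\theta$: the source by $O(\theta)$ from Lipschitz continuity of $f$, and the discount and viscosity coefficients by multiplicative factors $1+O(\theta)$. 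A stability argument between $v$ and $\tilde u$ on the common domain $\overline{\Omega}$, carried out by doubling of variables $\Phi(x,y)=v(x)-\tilde u(y)-\eta^{-1}|x-y|^2$ and the Crandall-Ishii lemma, would yield $\delta(v-\tilde u)\leq C\theta$ on $\overline{\Omega}$: the uniform bound on $\delta\tilde u$ from Corollary~\ref{cor:improved_bounds} absorbs the discount discrepancy, the Lipschitz bound on $f$ handles the source error, and the trace inequality $\tr X\leq\tr Y$ from Crandall-Ishii, combined with the supersolution inequality for $\tilde u$, controls the Laplacian mismatch.

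Together with the uniform-in-$\delta$ H\"older estimate for $u$ on $(1+\theta)\overline{\Omega}$ from Theorem~\ref{thm:Holder_main}, which gives $|\tilde u(x)-u(x)|=|u((1+\theta)x)-u(x)|\leq C(\diam\Omega)^\alpha\theta^\alpha$, one assembles
\[
    \delta\bigl(v(x)-u(x)\bigr)\leq\delta(v-\tilde u)(x)+\delta(\tilde u-u)(x)\leq C\theta+C\delta\theta^\alpha\leq C\theta^\alpha,
\]
using $0<\delta<1$ and $\theta\leq\theta^\alpha$ in the range of interest. The principal obstacle is that $|Du|^p$ and $\Delta u$ transform with different powers of $(1+\theta)$ under $x\mapsto(1+\theta)x$, which is the essential new difficulty compared to the first-order case ($\varepsilon=0$) where the scaling is clean. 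The resulting incompatible coefficient perturbations in the doubling-of-variables step must be absorbed by combining the Crandall-Ishii matrix inequality with the supersolution equation for $\tilde u$ itself, giving the $O(\theta)$ rate at the stability step; this rate is comfortably dominated by $C\theta^\alpha$ in the final assembly, which is why the exponent $\alpha=(p-2)/(p-1)$ (i.e.\ the H\"older rate) is the one that appears in the statement.
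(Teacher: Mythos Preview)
Your lower bound is fine and matches the paper. The upper bound sketch, however, has two real gaps that the paper's argument is designed precisely to avoid.

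First, the state-constraint boundary issue is not addressed. You need the subsolution test for $v$, but $v$ is only a subsolution in the \emph{open} set $\Omega$; with the standard penalty $\eta^{-1}|x-y|^2$ nothing prevents the maximum of $\Phi$ from occurring at $x_0\in\partial\Omega$, where no subsolution inequality is available. The paper handles this by working on $(1+\theta)\overline{\Omega}\times\overline{\Omega}$ with the $\theta$-dependent penalty $C_1|\hat{x}-y|^2/\theta^{2-\alpha}$, and using the H\"older estimate together with assumption \ref{itm:A1} to force $\hat{x}_\theta\in(1+\theta)\Omega$, i.e.\ $x_\theta\in\Omega$. This is not an optional refinement; without it the viscosity inequalities cannot be invoked.

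Second, your handling of the Laplacian mismatch does not close. After subtraction you are left with a term of the form $-\varepsilon\bigl((1+\theta)^{p-2}-1\bigr)\mathrm{Tr}(Y)$, and you need an \emph{upper} bound for it, i.e.\ a lower bound on $\mathrm{Tr}(Y)$. The supersolution inequality only bounds $\mathrm{Tr}(Y)$ from above, and the Crandall--Ishii matrix inequality only gives $|\mathrm{Tr}(Y)|\lesssim \eta^{-1}$, so the error is $O(\theta/\eta)$ and blows up as $\eta\to 0$. The paper sidesteps this entirely by choosing the rescaling $\tilde{u}(x)=(1+\theta)^{-2}u((1+\theta)x)$ rather than your $u((1+\theta)x)$: with that extra factor the Laplacian coefficients match exactly ($\varepsilon$ on both sides). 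The asymmetric penalty $|(1+\theta)x-y|^2$ is then chosen so that the gradient terms in the two tests also match exactly (both carry the factor $(1+\theta)^p$) and cancel without residue. What remains after subtraction is $f(x_\theta)-f((1+\theta)y_\theta)+\varepsilon\,\mathrm{Tr}(X_\theta-Y_\theta)$, and the $\theta$-dependent penalty is exactly what makes Lemma~\ref{lem:ishii} deliver $\mathrm{Tr}(X_\theta-Y_\theta)\leq C\theta^\alpha$. The H\"older step you describe at the end is then unnecessary in the paper's route: the exponent $\alpha$ comes from the Ishii-lemma estimate on the second-order terms, not from a separate $|\tilde u-u|$ correction.
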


\begin{cor}\label{cor:Holder_c(lambda)} Assume \ref{itm:A1}. If $\Omega' = (1+\theta)\Omega$ then 
\begin{equation*}
    |c_{\Omega'} - c_{\Omega}| \leq C\left(\lambda + |\theta|^\alpha\right) \qquad\text{where}\; \alpha = (p-2)/(p-1).
\end{equation*}
Consequently, under \ref{itm:A3} then $\lambda\mapsto c(\lambda)$ is H\"older continuous of degree $\alpha$.
\end{cor}

\begin{proof} Let $u_\lambda\in \mathrm{C}(\overline{\Omega'})$ be a solution to $(\lambda, \Omega')$ and $v_\lambda\in \mathrm{C}(\overline{\Omega})$ be a solution to \eqref{eq:main}, respectively. From Theorem \ref{thm:Holder_est_nested_domain} and Lemma \ref{lem:bound_delta_u_delta_eig} we obtain
\begin{equation*}
\begin{aligned}
    |c_{\Omega'} - c_{\Omega}| 
        &\leq | \lambda  u_\lambda(0) + c_{\Omega'}| + | \lambda v_\lambda(0) + c_\Omega| +  \lambda |u_\lambda(0) - v_\lambda(0)| \leq 2C \lambda  + C|\theta|^\alpha.
\end{aligned}
\end{equation*}
Therefore $\lambda\mapsto c(\lambda)$ is H\"older continuois of degree $\alpha$ if we assume \ref{itm:A3}.
\end{proof}

We can improve that $\lambda\mapsto c(\lambda)$ is Lipschitz in Theorem \ref{thm:limit}.

\subsection{Limit of minimizing measures}

\begin{defn}\label{defn:scaledown} Let $\sigma$ be a measure on $(1+r)\overline{\Omega}\times \R^n$, we define $\tilde{\sigma 
}$ as a measure on $\overline{\Omega}\times \R^n$ by
\begin{equation}\label{def_measures}
    \int_{\overline{\Omega}\times\R^n} \varphi(x,v)\;d\tilde{\sigma}(x,v) = \int_{(1+r)\overline{\Omega}\times\R^n} \varphi\left(\frac{x}{1+r},v\right)\;d\sigma(x,v), \qquad \varphi\in \mathrm{C}(\overline{\Omega}\times\R^n).
\end{equation}
\end{defn}

Using $\varphi\equiv 1$ we see that this scaling is mass-preserving. Recall that $\Omega_\lambda = (1+r(\lambda))\Omega$.

\begin{lem}\label{lem:u_gamma} Assume \ref{itm:A1} and \ref{itm:A2}. 
\begin{itemize}
    \item[(i)] Let $\{\mu_\lambda\}$ be a sequence of measures in $\mathcal{M}( \lambda , \Omega_\lambda)$. Let $\{\tilde{\mu}_\lambda\}$ be its scaling defined on $\overline{\Omega}\times\R^n$ and $\mu_\lambda\rightharpoonup \mu$ along a subsequence to some measure $\mu$, then $\mu\in \mathcal{M}(\Omega)$.
    \item[(ii)] Let $\{\nu_\lambda\}$ be a sequence of measures in $\mathcal{M}(0, \Omega_\lambda)$. Let $\{\tilde{\mu}_\lambda\}$ be its scaling defined on $\overline{\Omega}\times\R^n$ and $\nu_\lambda\rightharpoonup \mu$ along a subsequence to some measure $\nu$, then $\mu\in \mathcal{M}(\Omega)$.
\end{itemize}
\end{lem}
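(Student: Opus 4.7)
The plan is to verify that $\mu$ satisfies the three conditions in Definition \ref{def:M_0} defining $\mathcal{M}(\Omega)$: (a) $\mu$ is a probability measure; (b) $\mu \in \mathcal{G}'_{0,\Omega}$; and (c) $\langle \mu, L\rangle_\Omega = -c(0)$. For (a) and the upper bound $\langle \mu, L\rangle_\Omega \leq -c(0)$ half of (c), the key input is the uniform bound $\langle \mu_\lambda, L\rangle_{\Omega_\lambda} = \lambda u_\lambda(z_\lambda) \leq C$ provided by Theorem \ref{thm:repdelta>0_main} and Corollary \ref{cor:improved_bounds}. Since $L(x,v) \geq C_p|v|^q - \Vert f\Vert_{L^\infty(U)}$, this yields $\sup_\lambda \int |v|^q\, d\tilde{\mu}_\lambda < \infty$, so $\{\tilde{\mu}_\lambda\}$ is tight (the $x$-marginals live in the compact set $\overline{\Omega}$) and $\mu$ is a probability measure. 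By Lipschitz continuity of $f$, the scaling identity \eqref{def_measures} gives $\langle \tilde{\mu}_\lambda, L\rangle_\Omega = \langle \mu_\lambda, L\rangle_{\Omega_\lambda} + O(r(\lambda))$; combined with Lemma \ref{lem:bound_delta_u_delta_eig} and Corollary \ref{cor:Holder_c(lambda)} this gives $\langle \tilde{\mu}_\lambda, L\rangle_\Omega \to -c(0)$, and the weak lower semicontinuity of $\mu \mapsto \int L\,d\mu$ gives $\langle \mu, L\rangle_\Omega \leq -c(0)$.

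The heart of the argument is (b). Given $(\phi, u) \in \mathcal{F}_{0,\Omega}$ with $\phi = tL + \chi$, I would construct a pair $(\phi_\lambda, \tilde{u}_\lambda) \in \mathcal{F}_{\lambda,\Omega_\lambda}$ via the scaled ansatz $\tilde{u}_\lambda(x) := (1+r(\lambda))^\alpha u(x/(1+r(\lambda)))$ with the exponent $\alpha := (p-2)/(p-1)$. The key algebraic fact is $p(\alpha-1) = \alpha - 2 = -q$, so the rescaled gradient and Laplacian terms share the common factor $(1+r(\lambda))^{-q}$; this alignment is the essential new feature of the second-order case and is precisely what allows the argument to proceed within the cone $\Phi^+$ of \cite{ishii_vanishing_2017-1, ishii_vanishing_2017}. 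Setting $\phi_\lambda := tL + \chi_\lambda$ with
\begin{equation*}
    \chi_\lambda(x) := \lambda \tilde{u}_\lambda(x) + (1+r(\lambda))^{-q}\bigl(tf(y) + \chi(y)\bigr) - tf(x), \qquad y := x/(1+r(\lambda)),
\end{equation*}
the standard change-of-variables for viscosity subsolutions shows $(\phi_\lambda, \tilde{u}_\lambda) \in \mathcal{F}_{\lambda,\Omega_\lambda}$. Applying $\mu_\lambda \in \mathcal{G}'_{z_\lambda,\lambda,\Omega_\lambda}$ gives $\langle \mu_\lambda, \phi_\lambda\rangle_{\Omega_\lambda} \geq \lambda \tilde{u}_\lambda(z_\lambda)$. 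Decomposing the left side via the minimality $\langle \mu_\lambda, L\rangle_{\Omega_\lambda} = \lambda u_\lambda(z_\lambda)$, converting $\int \chi_\lambda\,d\mu_\lambda$ to an integral against $\tilde{\mu}_\lambda$, and passing to the limit (using weak convergence and uniform convergence on the bounded pieces) yields $\langle \mu, \chi\rangle_\Omega \geq t c(0)$. Coupling this with the equality $\langle \mu, L\rangle_\Omega = -c(0)$ then gives $\langle \mu, \phi\rangle_\Omega = t\langle \mu, L\rangle_\Omega + \langle \mu, \chi\rangle_\Omega \geq 0$, proving (b).

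The main obstacle is closing the loop between (b) and the lower bound $\langle \mu, L\rangle_\Omega \geq -c(0)$ in (c): passing to the limit in the unbounded test $\phi = tL + \chi$ under the weak convergence $\tilde{\mu}_\lambda \rightharpoonup \mu$ can in principle lose mass in the $v$-variable, so that Portmanteau gives only a $\liminf$ inequality for the $L$-integral. I plan to resolve this either by a direct uniform-integrability argument combining the truncation $L^R := \min(L, R)$ with the uniform $L^q$-bound on $v$, or by first establishing $\mu \in \mathcal{G}'_{0,\Omega}$ on a dense subfamily of tests (including the specific choice $\phi = L + c(0)$ paired with a solution of \eqref{eq:S_0}) and then invoking the representation \eqref{eq:representation_main} to force equality. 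Part (ii) proceeds by the same scheme but is simpler: with $\delta = 0$ on $\Omega_\lambda$ the scaling ansatz directly produces $(\phi_\lambda, \tilde{u}_\lambda) \in \mathcal{F}_{0,\Omega_\lambda}$ without the $\lambda \tilde{u}_\lambda$ correction in $\chi_\lambda$, and the input $\langle \nu_\lambda, L\rangle_{\Omega_\lambda} = -c(\lambda) \to -c(0)$ is directly available with no need for Lemma \ref{lem:bound_delta_u_delta_eig}.
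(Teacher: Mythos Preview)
The paper omits this proof entirely, calling it a ``standard adaptation'' of \cite[Lemma~3.6]{tu_vanishing_2021}, so there is no line-by-line comparison to make. Your scheme --- tightness from the uniform bound on $\langle\mu_\lambda,L\rangle$, membership in $\mathcal G'_{0,\Omega}$ via a rescaled test pair, and $\langle\mu,L\rangle=-c(0)$ to close --- is exactly the expected adaptation.

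Two remarks. First, your scaling exponent $\alpha=(p-2)/(p-1)$ differs from the exponent $2$ that the paper uses in every other rescaling argument (see \eqref{eq:utilde} and the proofs of Theorems~\ref{thm:limit} and~\ref{thm:general}). Both land in the cone $\Phi^+$: yours keeps the $t$--coefficient of $\phi_\lambda$ equal to $t$, while the paper's makes $\Delta\tilde u(x)=\Delta u(y)$ at the cost of replacing $t$ by $t(1+r(\lambda))^q$. Either way one reaches the same limiting inequality $\langle\mu,\chi\rangle_\Omega\ge tc(0)$, so this is only a matter of convention.

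Second, you correctly isolate the one genuine subtlety that is new relative to the first-order case (where $\Phi^+$ consisted of bounded functions on $\overline\Omega\times\overline B_h$). Your first proposed fix does not quite work as stated: a uniform bound on $\int|v|^q\,d\tilde\mu_\lambda$ gives tightness of $\{\tilde\mu_\lambda\}$ but \emph{not} uniform integrability of $|v|^q$, so one cannot upgrade the $\liminf$ to a $\lim$ that way. The cleaner route is the one implicit in your second option and in the paper's Lemma~\ref{lem:convex_cone}(iii): since every $\phi\in\Phi^+$ splits as $tL+\chi$ with $\chi\in\mathrm C(\overline\Omega)$ bounded, and you already have the \emph{genuine} limit $\langle\tilde\mu_\lambda,L\rangle_\Omega\to -c(0)$ from minimality (not merely a $\liminf$), you get $\langle\tilde\mu_\lambda,\phi\rangle_\Omega\to -tc(0)+\langle\mu,\chi\rangle_\Omega$ for every $\phi\in\Phi^+$. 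This is convergence of the functionals on all of $\Phi^+$, which is precisely the mode of convergence used in the duality framework of \cite{ishii_vanishing_2017-1,ishii_vanishing_2017}; membership of the limit in $\mathcal G'_{0,\Omega}$ and the equality $\langle\mu,L\rangle_\Omega=-c(0)$ then follow together from the representation \eqref{eq:representation_main} and the lower-semicontinuity half you already have.
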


The proof of this Lemma is obmitted as is quite standard (adaptation of \cite[Lemma 3.6]{tu_vanishing_2021}).

\section{The vanishing discount problem} \label{sec:The vanishing discount problem}

Let $u_\lambda\in \mathrm{C}(\overline{\Omega}_\lambda)$ be the solution to \eqref{eq:S_lambda}. In this section, we show $u_\lambda + \lambda^{-1}c_\Omega \to u^\gamma$ as $\lambda \to 0^+$. We scale $u_\lambda$ into $\tilde{u}_\lambda \in \mathrm{C}(\overline{\Omega})$ for convenience by
\begin{equation}\label{eq:utilde}
    \tilde{u}_\lambda(x) = \frac{u_\lambda \big(\left(1+r(\lambda)\right)x\big)}{\left(1+r(\lambda)\right)^2}, \qquad x\in \overline{\Omega}.
\end{equation}
We then have:
\begin{equation}\label{eq:equation_of_utilde}
\begin{aligned}
    & \lambda \left(1+r(\lambda)\right)^2 \tilde{u}_\lambda(x) + H\Big(\left(1+r(\lambda)\right)x, \left(1+r(\lambda)\right) D\tilde{u}_\lambda(x)\Big)
    - \varepsilon \Delta \tilde{u}_\lambda(x) \leq 0\; \text{in}\;\Omega.
\end{aligned}
\end{equation}
The main idea is scaling equation from a domain to a different domain while maintaining the following (equivalent) forms
\begin{equation}\label{eq:form-1}
    \delta u + H_\phi(x,Du) - \varepsilon \Delta u \leq  0 \qquad\text{in}\;\mathcal{O} \qquad\text{or}\qquad  H_{\phi-\delta u}(x,Du) - \varepsilon \Delta u \leq  0 \qquad\text{in}\;\mathcal{O},
\end{equation}
where $\mathcal{O} = \Omega$ or $\Omega_\lambda$, and then applying Corollary \ref{cor:usage-measures} in an appropriate way. More importantly, for $\alpha, \beta \in \mathbb{R}\backslash \{0\}$ we have
\begin{align}\label{eq:form-2}
    H(\alpha x, \beta Du) = H_{\phi}(x,Du) \qquad\text{where}\qquad \phi(x,v) = L\left(\alpha x, \beta^{-1}v\right).
\end{align}
Indeed, we have 
\begin{equation*}
\begin{aligned}
     H_{\phi}(x,Du(x)) 
     &= \sup_{v} \Big(v\cdot Du(x) - L\left(\alpha x, \beta^{-1}v\right)\Big)\\
     &= \sup_{\beta \hat{v}} \Big(\hat{v}\cdot \beta Du(x) - L\left(\alpha x, \hat{v}\right)\Big) \\
     &= \sup_{\hat{v}} \Big(\hat{v}\cdot \beta Du(x) - L\left(\alpha x, \hat{v}\right)\Big) = H(\alpha x, \beta Du(x))
\end{aligned}
\end{equation*}
due to Legendre transform.

\begin{lem}\label{lem:bdd_norm} Assume \ref{itm:A1} and \ref{itm:A2}. If $u_\lambda \in \mathrm{C}(\overline{\Omega}_\lambda)$ solves \eqref{eq:S_lambda} then,
\begin{equation*}
        | \lambda  u_\lambda(x) + c_{\Omega_\lambda}| \leq C \lambda  \qquad\text{and}\qquad 
        | \lambda  u_\lambda(x) + c_\Omega| \leq C( \lambda  + |r(\lambda)|),
\end{equation*}
for all $x\in \overline{\Omega}_\lambda$, where $C$ is independent of $\lambda$. 
\end{lem}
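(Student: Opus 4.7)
The first inequality is almost immediate from the machinery already developed. The plan is to apply Lemma \ref{lem:bound_delta_u_delta_eig} with discount factor $\delta=\lambda$ and domain $\mathcal{O}=\Omega_\lambda=(1+r(\lambda))\Omega$, since $u_\lambda$ is exactly the solution of $(\lambda,\Omega_\lambda)$ in the sense of Definition \ref{defn:solve}. This yields
\begin{equation*}
    |\lambda u_\lambda(x)+c(\lambda)|\leq C_2(\Omega_\lambda)\,\lambda
    \qquad\text{for all } x\in\overline{\Omega}_\lambda.
\end{equation*}
The only subtle point is that the constant $C_2$ in Lemma \ref{lem:bound_delta_u_delta_eig} depends on $\mathrm{diam}(\Omega_\lambda)$ and $\|f\|_{L^\infty(\Omega_\lambda)}$; both quantities are uniformly bounded for $\lambda\in I$ because $\Omega_\lambda\subset U$ (the fixed reference open set containing all perturbations). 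Consequently $C_2(\Omega_\lambda)\leq C$ uniformly, which gives the first bound.

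For the second bound I would use the triangle inequality together with the Lipschitz regularity of $\lambda\mapsto c(\lambda)$ established in Corollary \ref{cor:diff_ae} (and more explicitly in the remark following its proof). Namely,
\begin{equation*}
    |\lambda u_\lambda(x)+c(0)|
    \leq |\lambda u_\lambda(x)+c(\lambda)|+|c(\lambda)-c(0)|
    \leq C\lambda + C|r(\lambda)|,
\end{equation*}
since $|c(\lambda)-c(0)|\leq C|r(\lambda)|$ from the Lipschitz estimate that is a direct consequence of \eqref{e:nice2} and Lemma \ref{prop:bound<mu,L>}. Combining gives $|\lambda u_\lambda(x)+c(0)|\leq C(\lambda+|r(\lambda)|)$, as desired.

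The only thing worth double-checking is that all constants can be chosen independently of $\lambda\in I$. Since \ref{itm:A1} guarantees a uniform interior ball condition (and $\Omega_\lambda$ inherits the same star-shapedness with a uniform $\kappa$ after shrinking $\varepsilon_0$ if necessary so that $B(0,\kappa)\subset\Omega_\lambda$), the bound in Proposition \ref{pro:bound_on_sln_main} used inside Lemma \ref{lem:bound_delta_u_delta_eig} holds with a uniform constant. There is no genuine obstacle here; the lemma is essentially a packaging of previously obtained estimates, and the first-order argument from \cite{tu_vanishing_2021} adapts verbatim once the Lipschitz continuity of $\lambda\mapsto c(\lambda)$ and the uniform discount estimate of Lemma \ref{lem:bound_delta_u_delta_eig} are in hand.
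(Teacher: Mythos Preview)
Your proof is correct and follows exactly the same route as the paper: the first bound is Lemma \ref{lem:bound_delta_u_delta_eig} applied on $\Omega_\lambda$, and the second is the triangle inequality combined with the Lipschitz estimate $|c(\lambda)-c(0)|\leq C|r(\lambda)|$ from Corollary \ref{cor:diff_ae}. Your additional remarks on the uniformity of constants are accurate and simply make explicit what the paper leaves implicit.
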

\begin{proof} The first inequality is a consequence of Lemma \ref{lem:bound_delta_u_delta_eig}. The second inequality is the consequence of the first inequality and $|c_{\Omega_\lambda} - c_\Omega|\leq C|r(\lambda)|$ as in Corollary \ref{cor:diff_ae}.
\end{proof}

We use the normalization $\tilde{u}_\lambda(\cdot) +  \lambda^{-1}c_\Omega$ to make it consistent with the first-order case \cite{tu_vanishing_2021}. 

\begin{proof}[Proof of Theorem \ref{thm:general}] We split the proof into 3 steps. In steps 1 and 2, we establish two inequalities that, when used together, yield the limit of $\tilde{u}_\lambda + \lambda^{-1}c_\Omega \to u^\gamma$. In step 3, we give a characterization of $u^\gamma$. We start by observing that the sequence 
\begin{equation}\label{eq:normalize}
    \tilde{u}_\lambda(\cdot) + \frac{c_\Omega}{\lambda}    
\end{equation}
is bounded and equicontinuous, thus it has convergence subsequence. We show that such a limit is unique. Assume that there exist $\lambda_j\rightarrow 0$ and $\delta_j \rightarrow 0$ such that
\begin{equation}\label{eq:vanishing-assume-two-limits}
    \lim_{\lambda_j\to 0}\left(\tilde{u}_{\lambda_j}(x)+\frac{c_\Omega}{\lambda_j}\right)=u(x) \qquad\text{and}\qquad \lim_{\delta_j\to 0}\left(\tilde{u}_{\delta_j}(x)+\frac{c_\Omega}{\delta_j}\right)= w(x)
\end{equation}
locally uniformly as $j\to \infty$. For given $z\in \overline{\Omega}$, we show that $u(z) = w(z)$. We will show the following inequalities,
\begin{align}
    &\gamma \left\langle\mu, (-x, v)\cdot \nabla L(x,v)\right\rangle_\Omega + \langle \mu, w\rangle_\Omega - 2\gamma c_\Omega \leq 0, &&\forall \,\mu\in \mathcal{M}(\Omega),\label{eq:scale_down-1} \\
    &\gamma\big\langle \mu_0, (-x,v)\cdot \nabla L(x,v)\big\rangle_\Omega + \langle \mu_0, w\rangle_\Omega-2\gamma c_\Omega  + u(z) - w(z)  \geq 0, &&\text{for some} \, \mu_0\in \mathcal{M}(\Omega). \label{eq:scale_down-2}
\end{align}
Then, consequently $u(z)\geq w(z)$. We can reverse the roles of $u(z)$ and $w(z)$ to obtain $u(z)\leq w(z)$ and thus $u(z)\equiv w(z)$. \medskip

\paragraph{\textbf{Step 1.}} We show \eqref{eq:scale_down-1}. \medskip

\noindent
We start with equation of $\tilde{u}_\lambda$ as in \eqref{eq:equation_of_utilde}. Using the strategy in \eqref{eq:form-1}, \eqref{eq:form-2} we obtain
\begin{equation*}
    \begin{cases}
    \begin{aligned}
        &H_\phi(x,D\tilde{u}_\lambda(x)) - \varepsilon \,\Delta
    \tilde{u}_\lambda(x)\leq 0 \qquad  \text{in}\;\Omega\\
        & \phi(x,v) = L\left(\left(1+r(\lambda)\right)x, \frac{v}{1+r(\lambda)}\right)  -  \lambda \left(1+r(\lambda)\right)^2 \tilde{u}_\lambda(x) \in \Phi^+(\overline{\Omega}\times \R^n).
    \end{aligned}
    \end{cases}
\end{equation*}
Using Corollary \ref{cor:usage-measures} (ii) and definition of $\mathcal{M}(\Omega)$ we have
\begin{equation*}
\begin{aligned}
    & \langle \mu, \phi\rangle_{\Omega} \geq 0 &&\qquad \mu\in \mathcal{M}(\Omega) \\
    -&\langle \mu, L\rangle_{\Omega} = c_\Omega && \qquad
    \mu\in \mathcal{M}(\Omega).
\end{aligned}
\end{equation*}
Therefore
\begin{align*}
    \left\langle \mu,  L\left(\left(1+r(\lambda)\right)x, \frac{v}{1+r(\lambda)}\right) -  \lambda \left(1+r(\lambda)\right)^2 \tilde{u}_\lambda(x) - L(x,v)\right\rangle_\Omega \geq c_\Omega \qquad\forall\;\mu\in \mathcal{M}(\Omega).
\end{align*}
We rearrange the left-hand side to match the form of \eqref{eq:normalize}:
\begin{align*}
    &\left\langle \mu,  L\left(\left(1+r(\lambda)\right)x, \frac{v}{1+r(\lambda)}\right)  - L(x,v)\right\rangle_\Omega 
    \geq 
    \left\langle\mu, \lambda \left(1+r(\lambda)\right)^2 \tilde{u}_\lambda(x) + c_\Omega\right\rangle_\Omega
    \\
    &\qquad\qquad\qquad\qquad = \lambda (1+r(\lambda))^2 \left\langle \mu, \tilde{\mu}_\lambda + \frac{c_\Omega}{\lambda}\right\rangle_\Omega + \left(1-(1+r(\lambda))^2\right) c_\Omega, \qquad\forall\;\mu\in \mathcal{M}(\Omega).
\end{align*}
Divide both sides by $\lambda$ and let $\lambda = \delta_j\to 0^+$, by \eqref{eq:vanishing-assume-two-limits} we obtain
\begin{align*}
    -\gamma \left\langle\mu, (-x, v)\cdot \nabla L(x,v)\right\rangle_\Omega \geq \langle \mu, w\rangle_\Omega -2\gamma c_\Omega, \qquad\forall\;\mu\in \mathcal{M}(\Omega).
\end{align*}
Therefore \eqref{eq:scale_down-1} follows.\medskip

\paragraph{\textbf{Step 2.}} We show \eqref{eq:scale_down-2}. \medskip

\noindent
We have
\begin{equation*}
    H(x,Dw(x))-\varepsilon\Delta w(x)\leq c_\Omega, \qquad \text{in}\;\Omega.
\end{equation*}
We scale this equation into an equation on $\Omega_{\lambda}$ as follows: 
\begin{equation*}
    \begin{cases}
    \begin{aligned}
        &\tilde{w}_\lambda(x) = \left(1+r(\lambda)\right)^2w\left(\frac{x}{1+r(\lambda)}\right) &&\text{on}\;\overline{\Omega}_\lambda,\\ 
        &H\left(\frac{x}{1+r(\lambda)}, \frac{1}{1+r(\lambda)} D\tilde{w}_\lambda(x)\right) - \varepsilon \Delta \tilde{w}_\lambda(x) \leq c_\Omega \qquad  &&\text{in}\;\Omega_\lambda.
    \end{aligned}
    \end{cases}
\end{equation*}
Using the strategy in \eqref{eq:form-1}, \eqref{eq:form-2} we obtain
\begin{equation*}
    \begin{cases}
    \begin{aligned}
        &\lambda \tilde{w}_\lambda(x) + H_\psi(x,D\tilde{w}_\lambda(x)) - \varepsilon \,\Delta
    \tilde{w}_\lambda(x)\leq 0 \qquad  \text{in}\;\Omega_\lambda\\
        & \psi(x,v) = L\left(\frac{x}{1+r(\lambda)}, \left(1+r(\lambda)\right)v\right) +\lambda \tilde{w}_\lambda(x) + c_\Omega\in \Phi^+\left(\overline{\Omega}_\lambda\times\R^n\right)
    \end{aligned}
    \end{cases}
\end{equation*}
Using Corollary \ref{cor:usage-measures} (i) we have
\begin{equation}\label{eq:convergence-vanishing-abc1}
    \langle \sigma, \psi - \lambda \tilde{w}_\lambda(z)\rangle_{\Omega_\lambda} \geq 0, \qquad \sigma\in \mathcal{P}\cap\mathcal{G}'_{z,\lambda,\Omega_\lambda}. 
\end{equation}
For all $\lambda>0$, by Theorem \ref{thm:repdelta>0_main} there exist $\mu_{\lambda} \in \mathcal{P}\cap \mathcal{G}'_{z, \lambda ,\Omega_{\lambda}}$ such that 
\begin{equation}\label{eq:convergence-vanishing-abc2}
    \langle \mu_\lambda, L \rangle_{\Omega_\lambda}=\lambda u_\lambda(z).
\end{equation}
In \eqref{eq:convergence-vanishing-abc1}, we set $\sigma = \mu_\lambda$, and by combining it with \eqref{eq:convergence-vanishing-abc2}, we obtain:
\begin{equation}\label{eq:convergence-vanishing-abc3}
    \left\langle \mu_\lambda, L\left(\frac{x}{1+r(\lambda)}, \left(1+r(\lambda)\right)v\right) - L(x,v) + \lambda u_\lambda(z) -\lambda \tilde{w}_\lambda(z) + c(0)  + \lambda \tilde{w}_\lambda\right\rangle_{\Omega_\lambda} \geq 0.
\end{equation}
Let $\tilde{\mu}_\lambda$ be the measure obtained from $\mu_\lambda$ defined as in Definition \ref{defn:scaledown}.  We write \eqref{eq:convergence-vanishing-abc3} as 
\begin{align*}
    \Big\langle \tilde\mu_\lambda, L\left(x, \left(1+r(\lambda)\right)v\right) - L\big((1+r(\lambda))x,v\big)\Big\rangle_\Omega  
    &+ \lambda u_\lambda(z) -\lambda \tilde{w}_\lambda(z) + c_\Omega \\
    &+ \lambda(1+r(\lambda))^2\langle \tilde\mu_\lambda, w\rangle_\Omega   \geq 0.
\end{align*}
Divide both sides by $\lambda>0$ we have
\begin{align}\label{eq:convergence-vanishing-abc4}
    \left\langle \tilde\mu_\lambda, \frac{L\left(x, \left(1+r(\lambda)\right)v\right) - L\big((1+r(\lambda))x,v\big)}{\lambda}\right\rangle_\Omega &+ u_\lambda(z) -\tilde{w}_\lambda(z) + \frac{c_\Omega}{\lambda} \nonumber\\
    &+(1+r(\lambda))^2\langle \tilde\mu_\lambda, w\rangle_\Omega    \geq 0.
\end{align}
By Lemma \ref{lem:u_gamma} there exists (up to subsequence) $\mu_0 \in \mathcal{M}(\Omega)$ such that $\tilde{\mu}_\lambda\rightharpoonup \mu_0$ in measure. Divide both sides by $\lambda$ and let $\lambda = \lambda_j \to  0$. We note that 
\begin{equation*}
    u_\lambda(z) + \frac{c_\Omega}{\lambda}  = (1+r(\lambda))^2 \left[\tilde{u}_\lambda\left(\frac{z}{1+r(\lambda)}\right) + \frac{c(0)}{\lambda}\right] + \left(\frac{1-(1+r(\lambda)^2)}{\lambda}\right)c_\Omega \rightarrow u(z) -2\gamma c_\Omega
\end{equation*}
as $\lambda_j\to 0$, thanks to \eqref{eq:vanishing-assume-two-limits}. We obtain from \eqref{eq:convergence-vanishing-abc4} that
\begin{equation*}
    \gamma\langle \mu_0, (-x,v)\cdot \nabla L(x,v) \rangle_\Omega + u(z) - w(z) - 2\gamma c_\Omega + \langle\mu_0, w\rangle_\Omega \geq 0.
\end{equation*}
and thus \eqref{eq:scale_down-2} follows. \medskip

\paragraph{\textbf{Step 3.}} We show the characterization 
\eqref{eq:maxM_gamma}. \medskip

\noindent
From \eqref{eq:scale_down-1} and \eqref{eq:scale_down-2} we conclude that $u\equiv w$. Let us denote the unique limit of $\tilde{u}_{\lambda}(x)+ \lambda ^{-1}c(0)$ by $\tilde{u}^\gamma$. We can get the relation between $\tilde{u}^\gamma$ and $u^\gamma$ by writing
\begin{align*}
    \tilde{u}_\lambda(x) + \frac{c_\Omega}{\lambda} &= \frac{1}{(1+r(\lambda))^2}\left(u_\lambda(x) +  \frac{c_\Omega}{ \lambda }\right)  \\
    &+ \frac{u_\lambda((1+r(\lambda))x) - u_\lambda(x)}{(1+r(\lambda))^2}  + \frac{c_\Omega}{ \lambda }\left(1-\frac{1}{(1+r(\lambda))^2}\right).
\end{align*}
Let $\lambda\to 0$ we obtain 
\begin{equation}\label{eq:relation-u-u-tilde}
    \tilde{u}^\gamma = u^\gamma +2\gamma c_\Omega.
\end{equation}
In \eqref{eq:scale_down-2} let $u = \tilde{u}^\gamma$ we obtain
\begin{equation*}
    \gamma\langle \mu_0, (-x,v)\cdot \nabla L(x,v)\rangle _\Omega  +\langle\mu_0,w\rangle_\Omega + u^\gamma(z) - w(z) \geq 0.
\end{equation*}
If $w\in \mathcal{E}^\gamma$ then by definition it implies $u^\gamma(z)\geq w(z)$. On the other hand, \eqref{eq:scale_down-1} reads $u^\gamma \in \mathcal{E}^\gamma$. Therefore $u^\gamma = \max \mathcal{E}^\gamma$.
\end{proof}

We omit the proof of the following result as it is similar to the first-order case in \cite[Corollary 1.3]{tu_vanishing_2021} once Theorem \ref{thm:general} is established. 

\begin{cor}\label{cor:concave_sln} 
The mapping $\gamma\mapsto u^\gamma(\cdot)$ is concave and decreasing. Precisely, if $\alpha\leq \beta$ then $u^\beta\leq u^\alpha$ and $(1-s)u^\alpha + s u^\beta \leq u^{(1-s)\alpha+s\beta}$ for $s\in (0,1)$.
\end{cor}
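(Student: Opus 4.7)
The plan is to exploit the variational characterization $u^\gamma = \max_{w\in \mathcal{E}^\gamma} w$ from Theorem~\ref{thm:general}, together with the sign property $\langle \mu, (-x,v)\cdot \nabla L(x,v)\rangle_\Omega \geq 0$ for $\mu\in \mathcal{M}(\Omega)$ supplied by Lemma~\ref{rem: on L}, and the convexity of $H(x,\xi) = |\xi|^p - f(x)$ in $\xi$.

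For the monotonicity statement, suppose $\alpha \leq \beta$. I would first show that $\mathcal{E}^\beta \subset \mathcal{E}^\alpha$. Indeed, for any $w\in \mathcal{E}^\beta$ and any $\mu\in \mathcal{M}(\Omega)$, since $\langle \mu, (-x,v)\cdot \nabla L(x,v)\rangle_\Omega \geq 0$ and $\alpha \leq \beta$, one has
\begin{equation*}
\alpha\langle \mu, (-x,v)\cdot \nabla L(x,v)\rangle_\Omega + \langle \mu, w\rangle_\Omega \;\leq\; \beta\langle \mu, (-x,v)\cdot \nabla L(x,v)\rangle_\Omega + \langle \mu, w\rangle_\Omega \;\leq\; 0,
\end{equation*}
so $w\in \mathcal{E}^\alpha$. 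Since by Theorem~\ref{thm:general} the function $u^\beta$ itself lies in $\mathcal{E}^\beta\subset \mathcal{E}^\alpha$, the maximality of $u^\alpha$ gives $u^\beta \leq u^\alpha$.

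For concavity, set $\gamma = (1-s)\alpha + s\beta$ with $s\in (0,1)$, and let $v := (1-s)u^\alpha + s u^\beta$. The goal is to prove $v\in \mathcal{E}^\gamma$, from which $v\leq u^\gamma$ follows immediately by maximality. There are two conditions to verify. First, that $v$ is a viscosity subsolution of $|Dv|^p - f(x) - \varepsilon \Delta v \leq c(0)$ in $\Omega$: this follows from the convexity of $\xi\mapsto |\xi|^p$ together with the linearity of $-\varepsilon\Delta$, using the standard fact that convex combinations of viscosity subsolutions of a Hamiltonian that is convex in $(\xi,X)$ (here the $X$-dependence is linear) remain subsolutions. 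Second, that the measure inequality holds: for any $\mu \in \mathcal{M}(\Omega)$, by linearity of $\mu\mapsto \langle \mu, \cdot\rangle_\Omega$ and the defining inequalities of $\mathcal{E}^\alpha$ and $\mathcal{E}^\beta$,
\begin{equation*}
\gamma\langle \mu, (-x,v)\cdot \nabla L\rangle_\Omega + \langle \mu, v\rangle_\Omega = (1-s)\Bigl[\alpha\langle \mu, (-x,v)\cdot \nabla L\rangle_\Omega + \langle \mu, u^\alpha\rangle_\Omega\Bigr] + s\Bigl[\beta\langle \mu, (-x,v)\cdot \nabla L\rangle_\Omega + \langle \mu, u^\beta\rangle_\Omega\Bigr] \leq 0.
\end{equation*}
Thus $v\in \mathcal{E}^\gamma$ and the concavity estimate follows.

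The only nonroutine point is justifying that $v$ is a viscosity subsolution of the ergodic equation; this is the convex-combination-stability property for subsolutions of convex Hamilton--Jacobi equations, which is classical in the viscosity-solutions literature and can be verified by testing against any $C^2$ test function touching $v$ from above and using Jensen's inequality on the Hamiltonian. Everything else reduces to inserting the sign condition from Lemma~\ref{rem: on L} and the linearity of the pairing $\langle \mu, \cdot\rangle_\Omega$.
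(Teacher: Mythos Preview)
Your argument is correct and matches the approach the paper has in mind: the paper omits this proof, remarking only that it follows from Theorem~\ref{thm:general} as in the first-order case, and your use of the maximality characterization $u^\gamma=\max\mathcal{E}^\gamma$ together with Lemma~\ref{rem: on L} is exactly that route. The one step you flag as nonroutine---that a convex combination of viscosity subsolutions of $|Du|^p-f-\varepsilon\Delta u\le c(0)$ is again a subsolution---is indeed standard here (convex Hamiltonian, linear second-order part); the paper itself invokes the same fact elsewhere, citing \cite[Lemma~2.7]{ishii_vanishing_2017-1}.
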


\section{The differentiability of the additive eigenvalue} \label{sec:c'(0)}

In this section, we prove Theorem \ref{thm:limit} and Corollary \ref{cor:diff_ae}. 
\subsection{The one-sided derivatives of the eigenvalue} 

\begin{proof}[Proof of Theorem \ref{thm:limit}] We will divide the proof of the limit in \eqref{eq:limit1} into three parts. Let us consider $\lambda 0$ first and prove \eqref{eq:limit1} as \eqref{eq:limit2} is similar. In steps 1 and 2, we establish inequalities for $\liminf$ and $\limsup$ of the limit \eqref{eq:limit1}. Then, we combine these inequalities together in step 3 to obtain the conclusion. \medskip 

\paragraph{\textbf{Step 1.}} We show that
\begin{equation}\label{eq:limit-p3-revise-1}
    \liminf_{\lambda\rightarrow 0^+}\left(\frac{c(\lambda)-c(0)}{\lambda}\right) \geq  \langle\mu, (-x,v)\cdot \nabla L(x,v)\rangle_\Omega \qquad\text{for all}\; \mu \in \mathcal{M}(\Omega).
\end{equation}
Let $w$ be a subsolution to
\begin{equation}\label{eq:cell-w_lambda}
    \begin{aligned}
        H(x,Dw(x)) - \varepsilon \Delta w(x) &\leq c(\lambda) &  &\text{in}\;\Omega_\lambda.
    \end{aligned}
\end{equation}
We scale \eqref{eq:cell-w_lambda} into an equation on $\Omega$ as follows:
\begin{equation}
    \begin{cases}
    \begin{aligned}
        &\tilde{w}(x) = \frac{w\left(1+\lambda\right)}{\left(1+\lambda\right)^2} &&\qquad\text{on}\;\overline{\Omega}.\\
        &H\Big(\left(1+\lambda\right)x, \left(1+\lambda\right) D\tilde{w}(x)\Big)  - \varepsilon \Delta \tilde{w}(x) \leq c(\lambda) &&\qquad\text{in}\;\Omega.
    \end{aligned}
    \end{cases}
\end{equation}
Then, using the strategy in \eqref{eq:form-1}, \eqref{eq:form-2} we obtain
\begin{equation*}
    \begin{cases}
    \begin{aligned}
        & H_\phi(x,D\tilde{w}(x)) - \varepsilon \Delta \tilde{w}(x) \leq 0  \qquad\text{in}\;\Omega\\
        & \phi(x,v) = L\left(\left(1+\lambda\right)x, \frac{v}{1+\lambda}\right) + c(\lambda) \in \Phi^+(\overline{\Omega}\times \R^n).
    \end{aligned}
    \end{cases}
\end{equation*}
Using Corollary \ref{cor:usage-measures} (ii) and the definition of $\mathcal{M}(\Omega)$ we obtain 
\begin{equation*}
    \begin{aligned}
         &\langle \mu, \phi\rangle_\Omega \geq 0 &&\qquad \mu\in \mathcal{M}(\Omega),\\
        -&\langle \mu, L\rangle_\Omega = c(0) &&\qquad \mu\in \mathcal{M}(\Omega). 
    \end{aligned}
\end{equation*}
Therefore
\begin{equation}\label{e:nice}
  \left\langle \mu, L\left(\left(1+\lambda\right)x, \frac{v}{1+\lambda}\right) - L(x,v)\right\rangle_\Omega + c(\lambda) - c(0) \geq 0, \qquad \forall\; \mu\in \mathcal{M}(\Omega).
\end{equation}
Thus if $\lambda>0$ we deduce that
\begin{equation}\label{eq:limit-p3}
    -\langle\mu, (-x,v)\cdot \nabla L(x,v)\rangle_\Omega + \liminf_{\lambda\rightarrow 0^+}\left(\frac{c(\lambda)-c(0)}{\lambda}\right) \geq 0 \qquad\forall\; \mu \in \mathcal{M}(\Omega).
\end{equation}
Therefore, we obtain our conclusion of the first step, equation \eqref{eq:limit-p3-revise-1}.
\medskip
\paragraph{\textbf{Step 2.}} We will show that there exists a measure $\sigma_0\in \mathcal{M}(\Omega)$ such that 
\begin{equation}\label{eq:limit-p3-revise-2}
    \limsup_{\lambda\rightarrow 0^+}\left(\frac{c(\lambda)-c_\Omega}{\lambda}\right) \leq  \langle\sigma_0, (-x,v)\cdot \nabla L(x,v)\rangle_\Omega.
\end{equation}
To see that, we choose $\lambda_j\rightarrow 0^+$ as a subsequence along which the $\limsup$ is attained:
\begin{equation}\label{eq:limit-p3-revise-3-limsup}
    \limsup_{\lambda\rightarrow 0^+}\left(\frac{c(\lambda)-c(0)}{\lambda}\right)  =\lim_{j\rightarrow \infty}\left(\frac{c(\lambda_j)-c(0)}{\lambda_j}\right).
\end{equation}
Let $u\in \mathrm{C}(\overline{\Omega})$ be such that
\begin{equation}\label{eq:step2-limsup-equation-Omega}
        H(x,Du(x)) - \varepsilon \Delta u(x) \leq c(0) \qquad  \text{in}\;\Omega.
\end{equation}
We scale \eqref{eq:step2-limsup-equation-Omega} into an equation on $\Omega_{\lambda}$ as follows: 
\begin{equation}\label{eq:eig_up2}
    \begin{cases}
    \begin{aligned}
        &\tilde{u}(x) = \left(1+\lambda\right)^2u\left(\frac{x}{1+\lambda}\right) &&\text{on}\;\overline{\Omega}_\lambda,\\ 
        &H\left(\frac{x}{1+\lambda}, \frac{1}{1+\lambda} D\tilde{u}(x)\right) - \varepsilon \Delta \tilde{u}(x) \leq c(0) \qquad  &&\text{in}\;\Omega_\lambda.
    \end{aligned}
    \end{cases}
\end{equation}
Then, using the strategy in \eqref{eq:form-1}, \eqref{eq:form-2} we obtain
\begin{equation*}
    \begin{cases}
    \begin{aligned}
        &H_\psi(x,D\tilde{w}(x)) - \varepsilon \,\Delta
    \tilde{w}(x)\leq 0 \qquad  \text{in}\;\Omega_\lambda\\
        & \psi(x,v) = L\left(\frac{x}{1+\lambda}, (1+\lambda)v\right) + c(0) \in \Phi^+(\overline{\Omega}_\lambda \times \R^n)
    \end{aligned}
    \end{cases}
\end{equation*}
Using Corollary \ref{cor:usage-measures} (ii) and definition of $\mathcal{M}(\Omega_\lambda)$ we have
\begin{equation*}
\begin{aligned}
    & \langle \sigma, \phi\rangle_{\Omega_\lambda } \geq 0 &&\qquad \sigma\in \mathcal{M}(\Omega_\lambda) \\
    -&\langle \sigma, L\rangle_{\Omega_\lambda} = c(\lambda) && \qquad
    \sigma\in \mathcal{M}(\Omega_\lambda).
\end{aligned}
\end{equation*}
Therefore 
\begin{equation}\label{eq:step2-almost-final-form}
    \left\langle \sigma ,L\left(\frac{x}{1+\lambda}, (1+\lambda)v\right) - L\left(x,v\right)\right\rangle_{\Omega_\lambda} -c(\lambda)+c(0) \geq 0, \qquad \forall\; \sigma\in \mathcal{M}(\Omega_\lambda).
\end{equation}
Let $\sigma_{\lambda}$ be a measure in $\mathcal{M}(\Omega_{\lambda})$ and $\widetilde{\sigma}_\lambda$ be its scaling as in Definition \ref{defn:scaledown}. We write \eqref{eq:step2-almost-final-form} as 
\begin{equation}\label{e:nice3}
    \Big\langle \widetilde{\sigma}_\lambda, L\left(x,(1+\lambda)v\right) - L\left((1+\lambda)x,v\right)\Big\rangle_\Omega \geq c(\lambda)-c(0).
\end{equation}
If $\lambda> 0$, we have
\begin{equation}\label{e:nice2}
    \left\langle \widetilde{\sigma}_\lambda, \frac{L\left(x,(1+\lambda)v\right) - L\left((1+\lambda)x,v\right)}{\lambda}\right\rangle_\Omega \geq \frac{c(\lambda)-c(0)}{\lambda}.
\end{equation}
Along the sequence $\lambda_j$ that the limsup in \eqref{eq:limit-p3-revise-3-limsup} is attained, we can assume up to subsequence that $\widetilde{\sigma}_{\lambda_j}\rightharpoonup \sigma_0$ and $\sigma_0\in \mathcal{M}(\Omega)$. Let $\lambda_j\rightarrow 0^+$ in \eqref{e:nice2} we deduce that
\begin{equation}\label{eq:limit-p9}
    \left\langle \sigma_0, (-x,v)\cdot \nabla L(x,v)\right\rangle_\Omega \geq  \limsup_{\lambda\rightarrow 0^+} \left(\frac{c(\lambda) - c(0)}{\lambda}\right).
\end{equation}
Thus we obtain the conclusion \eqref{eq:limit-p3-revise-2} for our second steps. \medskip

\paragraph{\textbf{Step 3.}} We combine \eqref{eq:limit-p3-revise-1} (take $\mu = \sigma_0$) and \eqref{eq:limit-p3-revise-2} to obtain the conclusion \eqref{eq:limit1}, as:  
\begin{equation*}
    \lim_{\lambda\rightarrow 0^+}\left(\frac{c(\lambda)-c(0)}{\lambda}\right)  =  \left\langle \sigma_0,(-x,v)\cdot \nabla L(x,v)\right\rangle_\Omega = \max_{\mu\in \mathcal{M}(\Omega)} \left\langle \mu,(-x,v)\cdot \nabla L(x,v)\right\rangle_\Omega.
\end{equation*}
Similarly, if  $\lambda\leq 0$ as $\lambda\rightarrow 0^+$ then we conclude \eqref{eq:limit2},
    \begin{equation*}
        \lim_{\lambda\rightarrow 0^+} \left(\frac{c(\lambda)-c(0)}{\lambda}\right) = \min_{\mu\in \mathcal{M}(\Omega)} \left\langle \mu,(-x,v)\cdot \nabla L(x,v)\right\rangle_\Omega.
    \end{equation*}

For an oscillating $\lambda$ such that neither $r^-(\lambda) = \min\{0,\lambda\}$ nor $r^+(\lambda) = \max\{0,\lambda\}$ is identical to zero as $\lambda\rightarrow 0^+$, by applying the previous results we obtain the final conclusion.
\end{proof}

\subsection{Addtional properties of the eigenvalue map}

\begin{lem}\label{lem:c} If $\Omega, \Omega'$ are two domains that satisfy \ref{itm:A1}. If $\Omega \subset \Omega'$ then $c_\Omega\leq c_{\Omega'}$.
\end{lem}
\begin{proof} If $c\in \mathbb{R}$ and $u$ is a subsolution for $H(x,Du) -\varepsilon \Delta u \leq c$ in $\Omega'$ then it is also a subsolution to the same equation in $\Omega$. Therefore, by \eqref{eq:c(0)}, we obtain the conclusion.
\end{proof}

\begin{lem}\label{rem: on L} Assume \ref{itm:A1}, \ref{itm:A2}. We have
\begin{equation}\label{eq:pos}
    \left\langle \mu, (-x,v)\cdot \nabla L(x,v)\right\rangle_\Omega \geq 0 \qquad \forall\;\mu\in \mathcal{M}(\Omega).
\end{equation}
\end{lem}

\begin{proof} The proof of \eqref{eq:pos} is a minor adaptation of \cite[Lemma 3.5]{tu_vanishing_2021}. 
\end{proof}

From the proof of Theorem \ref{thm:limit}, we see that equations \eqref{e:nice} and \eqref{e:nice2} imply that $\lambda\mapsto c(\lambda)$ is indeed Lipschitz, which is stronger than Corollary \ref{cor:Holder_c(lambda)}. We verify that in the following Lemma.

\begin{lem}\label{prop:bound<mu,L>} Assume \ref{itm:A1}, \ref{itm:A2}. There exists $C = C(p,\mathrm{diam}(U), \min_U f,\kappa)$ where $\kappa$ is defined in \ref{itm:A1} such that
\begin{equation*}
    \langle \mu, |v|^q\rangle_{\Omega_\lambda} \leq C \qquad\text{for all}\;  \mu \in \mathcal{M}(\Omega_\lambda).
\end{equation*}
Consequently, there exists a constant $C$ such that 
\begin{equation}\label{eq:Lipschitz-clambda}
    |c(\lambda) - c(0)|\leq C|\lambda|.
\end{equation}
\end{lem}

\begin{proof} For $\mu\in \mathcal{M}(\Omega_\lambda)$ we have
\begin{align*}
    -c(\lambda) = \langle \mu, L\rangle_{\Omega_\lambda} = \big\langle \mu, C_q|v|^q + f(x)\big\rangle_{\Omega_\lambda} = C_q\langle \mu, |v|^q\rangle_{\Omega_\lambda} + \langle \mu, f\rangle_{\Omega_\lambda}.
\end{align*}
Therefore
\begin{equation*}
    \left\langle \mu, |v|^q\right\rangle_{\Omega_\lambda}  = -\frac{c(\lambda) + \langle\mu, f\rangle_{\Omega_\lambda}}{C_q} 
    \leq   \frac{1}{C_q}\left(\max_{\overline{U}} |f| + C\right)
\end{equation*}
where $C$ is the constant defined in Lemma \ref{lem:bound_delta_u_delta_eig} and $U$ is defined in \ref{itm:A1}. To prove \eqref{eq:Lipschitz-clambda}, using $L(x,v) = C_p|v|^q + f(x)$ we observe that for $s\in (-1,1)$ then $(1+s)^q - 1 \leq q^2$ for $s\in (-1,1)$ and $q\in (1,2]$, therefore
    \begin{align*}
        &|L\big(x, (1+s)v\big) - L\big((1+s)x, v\big)| \\
        &\qquad\qquad \leq C_p\left((1+s)^q-1\right)|v|^q + \left|f(x) - f((1+s)x)\right| \leq \left(C_pq^2|v|^q + C\right)|s|
    \end{align*}
    where $C$ depends on the Lipschitz constant of $f$ on $U$ and $\mathrm{diam}(U)$. Let us consider $\lambda > 0$ first, then from \eqref{e:nice2} and the fact that $c_\Omega\leq c(\lambda)$ (Lemma \ref{lem:c}) we have 
    \begin{equation*}
    \begin{aligned}
        0 \leq c(\lambda)-c(0) &\leq \left\langle \tilde{\nu}_\lambda, L\left(x,(1+\lambda)v\right) - L\left((1+\lambda)x,v\right)\right\rangle_\Omega \\
        &\leq \left(C_pq^2\left\langle \tilde{\nu}_\lambda, |v|^q\right\rangle_\Omega + C\right)|\lambda| \\
        & = \left(C_pq^2\left\langle \nu_\lambda, |v|^q\right\rangle_{\Omega_\lambda} + C\right)|\lambda|  \leq C(C_pq^2+1) |\lambda|.
    \end{aligned}
    \end{equation*}
    The case where $\lambda \leq 0$ is similar. We conclude that $\lambda\mapsto c(\lambda)$ is Lipschitz.
\end{proof}

We finish this section by proving Corollary \ref{cor:diff_ae}.

\begin{proof}[Proof of Corollary \ref{cor:diff_ae}] Under \ref{itm:A3}, the fact that $\lambda \mapsto c(\lambda)$ is Lispchitz, increasing and $c'_{\pm}(\lambda)$ exists are consequences of Lemmas \ref{lem:c}, \ref{rem: on L} and \ref{prop:bound<mu,L>}. Parts (ii) is similar to the first-order case in \cite[Theorem 4.4]{tu_vanishing_2021}. Thus, we omit the proof of this fact. Part (iii) is a result from \cite[Theorem 4.2, Chapter 4]{bruckner_differentiation_1978} or \cite[ Theorem 17.9]{hewitt_real_1965}, where if $c'_{\pm}(\lambda)$ exists everywhere then the set where they are different is at most countable.
\end{proof} 

Together with Theorem \ref{thm:Holder_est_nested_domain}, we use the fact that $\lambda\mapsto c(\lambda)$ is Lipschitz to improve the error estimate in the nested domain setting but with some trade-off by a discount factor. 

\begin{cor}[Error estimate on nested domain] Assume \ref{itm:A1} and \ref{itm:A2}, $\theta >0$ and $0<\delta < 1$. Let $u \in \mathrm{C}((1+\theta)\overline{\Omega})$ and $v\in \mathrm{C}(\overline{\Omega})$ be solutions to $(\delta, (1+\theta)\Omega)$ and $(\delta, \Omega)$, respectively. Then, it holds that
\begin{equation*}
    0 \leq \delta  v(x) -  \delta u(x) \leq C\min \big\lbrace \delta+\theta, \theta^\alpha\big\rbrace, \qquad \text{for all}\; x\in \overline{\Omega}.
\end{equation*}
where $\alpha = (p-2)/(p-1)$ and $C$ depends on $\mathrm{diam}(U)$, the Lipschitz constant of $f$ on $U$ and $p$.
\end{cor}

\section{The ergodic problem and the differentiability of the additive eigenvalue}
\label{sec:ergodic problem and the differentiability} 
In this section, we prove Theorem~\ref{cor:back_forth}. From Theorem \ref{thm:limit}, when talking about $c'_{\pm}(0)$ we think about the derivatives of the map $\theta \mapsto c_{(1+\theta)\Omega}$ for $\theta \in (-\varepsilon_0,\varepsilon_0)$, with the formulas provided by \eqref{eq:limit1} and \eqref{eq:limit2}.

We first recall the definition of $\mathcal{C}(\gamma) = u^\gamma(\cdot) - u^0(\cdot)$ as a function from $\R\to \R$, from Remark \ref{rem:C(gamma)}.
We collect some definitions on the minimizing measures following \eqref{eq:representation_main-lambda} in Theorem \ref{thm:repdelta>0_main}.

\begin{defn}[Discount measures]\label{defn:M(lambda-z-Omega)}
    For $\lambda > 0$ and $z\in \overline{\Omega}$, we define
    \begin{align}
        \mathcal{M}(\lambda, z, \Omega) &= \left\lbrace \mu \in \mathcal{P}\cap \mathcal{G}'_{z,\lambda,\Omega}: \lambda u_\lambda(z) = \langle\mu, L\rangle_\Omega \right\rbrace, \label{eq:M(lambda-z-Omega)}\\
        \mathcal{M}(\lambda,\Omega) &= \bigcup_{z\in \overline{\Omega}} \mathcal{M}(\lambda, z,\Omega) \label{eq:M(lambda-Omega)}.
    \end{align}
    We say measures in $\mathcal{M}(\lambda,\Omega)$ are \emph{discount measures}.
\end{defn}

It is clear that weak limits of measures in $\mathcal{M}(\lambda,\Omega)$ as $\delta\to 0^+$ are members of $\mathcal{M}(\Omega)$. Regarding Lemma \ref{lem:u_gamma}, we define the following sets of measures.

\begin{defn}\label{def:solve_measures} For $\gamma\in \R$, $z\in \overline{\Omega}$, we define 
\begin{equation}\label{eq:r-lambda-linear}
    r_\gamma(\lambda) = \gamma\lambda, \qquad \Omega_\lambda = (1+\gamma\lambda)\Omega,  \qquad \lambda > 0
\end{equation}
and 
\begin{align}
    \mathcal{U}_\gamma(\Omega,z) &= \big\lbrace \mu\in \mathcal{M}(\Omega) :\;\exists\,\mu_{\lambda}\in  \mathcal{M}( \lambda , z, \Omega_\lambda)\;\text{such that}\; \tilde{\mu}_\lambda\rightharpoonup \mu \;\text{along a subsequence}\big\rbrace, \label{eq:U-gamma-Omega-z}\\
    \mathcal{U}_\gamma(\Omega) &= \big\lbrace \mu\in \mathcal{M}(\Omega) :\;\exists\,\mu_{\lambda}\in  \mathcal{M}( \lambda , \Omega_\lambda)\;\text{such that}\; \tilde{\mu}_\lambda\rightharpoonup \mu \;\text{along a subsequence}\big\rbrace,\\
    \mathcal{V}_\gamma(\Omega) &= \big\lbrace \nu\in \mathcal{M}(\Omega) :\;\exists\,\nu_{\lambda}\in  \mathcal{M}(\Omega_\lambda)\;\text{such that}\; \tilde{\nu}_\lambda\rightharpoonup \nu \;\text{along a subsequence}\big\rbrace,
\end{align}
where $\tilde{\mu}_\lambda, \tilde{\nu}_\lambda$ are the scaling measure defined on $\overline{\Omega}\times\R^n$ as in Definition \ref{defn:scaledown}.
\end{defn} 
We note that this choice of $r$ does not affect $u^\gamma$ as indicated by Theorem \ref{thm:general}. It is clear that $\bigcup_{z\in \overline{\Omega}} \mathcal{U}_\gamma(\Omega,z)\subset \mathcal{U}_\gamma(\Omega)$, and $\mathcal{U}_\gamma(\Omega), \mathcal{V}_\gamma(\Omega)$ are independent of the vertex $z\in \overline{\Omega}$ by the way we defined $\mathcal{M}( \lambda , \Omega_\lambda)$. Using \cite[Lemma 2.1 and Lemma 2.2]{ishii_vanishing_2017}, we have $\emptyset \neq \mathcal{V}_\gamma(\Omega),\mathcal{U}_\gamma(\Omega) \subset \mathcal{M}(\Omega)$.
If $\gamma = 0$ then we work on a fixed domain $\Omega$, and 
\begin{equation*}
    \mathcal{U}_0(\Omega) = \left\lbrace \mu \in \mathcal{M}(\Omega): \mu\;\text{is a weak limit of a sequence of measures in}\;\mathcal{M}(\lambda,\Omega)\right\rbrace.
\end{equation*}
Here $\mathcal{M}(\lambda,\Omega)$ is defined Definition \ref{defn:M(lambda-z-Omega)}. In other words, measures in $\mathcal{U}_0(\Omega)$ are weak limits of discount measures on $\Omega$, while measures in $\mathcal{U}_\gamma(\Omega)$ are weak limits of discount measures on $\Omega_\lambda = (1+\gamma \lambda)\Omega$.


\begin{cor}\label{cor:properties_U_gamma(z)} Assume \ref{itm:A1}. We have
\begin{equation*}
    \gamma \langle \mu_\gamma, (-x,v)\cdot \nabla L(x,v)\rangle_\Omega + \left\langle \mu, u^\gamma\right\rangle_\Omega = 0 \qquad\text{for any}\;\mu\in \mathcal{U}_\gamma(\Omega).
\end{equation*}
\end{cor}

\begin{proof} Take $\mu\in \mathcal{U}_\gamma(\Omega)$, we can find a sequence of points $z_\lambda\in \Omega_\lambda$ and $\mu_\lambda \in\mathcal{M}( \lambda , z_\lambda, \Omega_\lambda)$ such that $ \lambda u_\lambda(z_\lambda) = \langle \mu_\lambda, L \rangle_{\Omega_\lambda}$ and $\tilde{\mu}_\lambda \rightharpoonup \mu$ weakly in measures where $\tilde{\mu}_\lambda$ is the measure obtained from $\mu_\lambda$ defined as in Definition \ref{defn:scaledown}. Going through Step 2 as in the proof of Theorem \ref{thm:limit} with $w = \tilde{u}^\gamma$ as in \eqref{eq:convergence-vanishing-abc4}, we obtain 
\begin{equation*}
    \gamma\big\langle \mu, (-x,v)\cdot \nabla L(x,v)\big\rangle_\Omega + \langle\mu, u^\gamma\rangle_\Omega  \geq 0
\end{equation*}
and thus, the conclusion follows, as the other side of the inequality is given in \eqref{eq:scale_down-1}. Note that we use \eqref{eq:relation-u-u-tilde} to connect $\tilde{u}^\gamma$ to $u^\gamma$.
\end{proof}

\begin{lem}\label{lem:sigma0} Assume \ref{itm:A1}.
\begin{itemize}
    \item[$\mathrm{(i)}$] We have $\langle \sigma, u^0\rangle_\Omega =0$ for all $\sigma \in \mathcal{U}_0(\Omega)$. Furthermore
    \begin{equation*}
        \mathcal{C}(\gamma) = u^\gamma(z) - u^0(z) = \langle \sigma, u^\gamma\rangle_\Omega \qquad\text{for all}\;z\in \overline{\Omega}, \sigma\in \mathcal{U}_0(\Omega).
    \end{equation*}
    As a consequence, we have
    \begin{equation*}
        \gamma\big\langle \sigma,(-x,v)\cdot \nabla L(x,v) \big\rangle_\Omega + \mathcal{C}(\gamma)  \leq 0 \qquad\text{for all}\;\sigma\in \mathcal{U}_0(\Omega).
    \end{equation*}
    \item[$\mathrm{(ii)}$] We have
    \begin{equation}\label{eq:>=0-for-U-gamma}
        \gamma\big\langle \mu,(-x,v)\cdot \nabla L(x,v) \big\rangle_\Omega + \mathcal{C}(\gamma) \geq 0 \qquad\text{for all}\;\mu\in \mathcal{U}_\gamma(\Omega).
    \end{equation}
    
    \item[$\mathrm{(iii)}$] The map $\mathcal{C}:\gamma\mapsto \mathcal{C}(\gamma)$ is decreasing, concave with $\mathcal{C}(0) = 0$.

    \item[$\mathrm{(iv)}$] We have
    \begin{align*}
        \langle \sigma, u^\gamma\rangle_\Omega + \gamma c'_+(0) \leq 0 \qquad\text{for all}\;\sigma \in \mathcal{V}_{\gamma}(\Omega), \gamma > 0,\\
        \langle \sigma, u^\gamma\rangle_\Omega + \gamma c'_-(0) \leq 0 \qquad\text{for all}\;\sigma \in \mathcal{V}_{\gamma}(\Omega), \gamma < 0.
    \end{align*}
\end{itemize}
\end{lem}

\begin{proof} We recall that $\mathcal{U}_0,\mathcal{U}_\gamma \subset \mathcal{M}(\Omega)$. 
\begin{itemize}
    \item[(i)] By Theorem \ref{thm:general} we have $\langle \sigma, u^0\rangle \leq 0$ for all $\sigma \in \mathcal{M}(\Omega)$. Take $\sigma\in \mathcal{U}_0(\Omega)$, there exist $z_\lambda\in \overline{\Omega}$ and $\sigma_\lambda\in \mathcal{M}( \lambda , z_\lambda, \Omega)$ such that (up to a subsequence) $\sigma_\lambda\rightharpoonup \sigma$ weakly in measures. Let $w$ be a solution to $H(x,Dw(x)) - \varepsilon\Delta w(x) \leq c_\Omega$ in $\Omega$. Using the strategy in \eqref{eq:form-1}, \eqref{eq:form-2} we write 
    \begin{equation*}
    \begin{cases}
        \lambda w + H_\phi(x,Dw) - \varepsilon \Delta w \leq 0 \qquad\text{in}\;\Omega, \\
        \phi(x,v) = L(x,v) +  \lambda w(x) + c_\Omega \in \Phi^+(\overline{\Omega}\times\R^n).
    \end{cases}
    \end{equation*}
    Using Corollary \ref{cor:usage-measures} (i) we have
    \begin{equation*}
            \langle \sigma_\lambda, \phi - \lambda w(z)\rangle_{\Omega} \geq 0.
    \end{equation*}
    Using the definition of $\sigma_\lambda$ we have $ \langle \sigma_\lambda, L\rangle_\Omega =  \lambda v_\lambda(z_\lambda)$ where $v_\lambda$ solves \eqref{eq:main}. Therefore
    \begin{equation*}
         \lambda v_\lambda(z_\lambda) + c_\Omega +  \lambda \langle \sigma_\lambda, w\rangle_\Omega -  \lambda w(z_\lambda) \geq 0.
    \end{equation*}
    Divide both sides by $ \lambda $ we obtain
    \begin{equation*}
        \left(v_\lambda(z_\lambda) +  \frac{c_\Omega}{\lambda}\right) + \langle \sigma_\lambda, w\rangle_\Omega - w(z_\lambda) \geq 0.
    \end{equation*}
    We can assume $z_\lambda\to z_0$ for some $z_0\in \overline{\Omega}$. Take $\lambda\to 0^+$, thanks to Theorem \ref{thm:general} we have
    \begin{equation}\label{eq:lem-U0-Ugamma-connect-i}
        u^0(z_0) + \langle \sigma, w\rangle_\Omega \geq w(z_0)
    \end{equation}

    \begin{itemize}
        \item[$\circ$] Let $w = u^0$ in \eqref{eq:lem-U0-Ugamma-connect-i} then $\langle \sigma, u^0\rangle \geq 0$, thus 
        \begin{equation}\label{eq:<mu0,u0>-zero}
            \langle \sigma, u^0\rangle = 0 \qquad\text{for}\;\sigma \in \mathcal{U}_0(\Omega),
        \end{equation}
        since $\langle \mu, u^0\rangle\leq 0$ for all $\mu\in \mathcal{M}(\Omega)$. 

        \item[$\circ$] On the other hand, using \eqref{eq:<mu0,u0>-zero} we have
        \begin{align}\label{eq:<mu0,u-gamma>-C-gamma}
            \langle \sigma, u^\gamma\rangle_\Omega = \langle \sigma, u^\gamma\rangle_\Omega - \langle \sigma, u^0\rangle_\Omega = \langle \sigma, u^\gamma - u^0\rangle_\Omega = \langle \sigma, \mathcal{C}(\gamma)\rangle_\Omega = \mathcal{C}(\gamma).
        \end{align}
        
    \end{itemize}
    
    \noindent
    In equation \eqref{eq:maxM_gamma} of Theorem \ref{thm:general} (ii), let $w=u^\gamma$ and use \eqref{eq:<mu0,u-gamma>-C-gamma} we obtain
    \begin{equation*}
        \gamma\big\langle \sigma,(-x,v)\cdot \nabla L(x,v) \big\rangle_\Omega + \mathcal{C}(\gamma)  = \gamma\big\langle \sigma,(-x,v)\cdot \nabla L(x,v) \big\rangle_\Omega + \langle \sigma, u^\gamma\rangle_\Omega \leq 0.
    \end{equation*}
    \item[(ii)] Since $\mathcal{C}(\gamma) = u^\gamma(\cdot) - u^0(\cdot)$ is a constant, we have 
    \begin{equation*}
        \mathcal{C}(\gamma) = \langle \mu, \mathcal{C}(\gamma)\rangle_\Omega = \langle \mu, u^\gamma - u^0\rangle_\Omega \qquad\Longrightarrow\qquad \langle \mu, u^\gamma\rangle_\Omega =\mathcal{C}(\gamma) + \langle \mu, u^0\rangle_\Omega .
    \end{equation*}
    Using this in Corollary \ref{cor:properties_U_gamma(z)} we have
    \begin{equation*}
    \begin{aligned}
        0 &= \gamma\big\langle \mu, (-x,v)\cdot \nabla L(x,v) \big\rangle_\Omega + \langle \mu, u^\gamma\rangle_\Omega \\
        &= \gamma\big\langle \mu, (-x,v)\cdot \nabla L(x,v) \big\rangle_\Omega + \mathcal{C}(\gamma) + \langle\mu, u^0\rangle_\Omega \qquad\text{for all}\;\mu\in \mathcal{U}_\gamma(\Omega).
    \end{aligned}
    \end{equation*}
    Since $\langle \mu,u^0 \rangle \leq 0$ for all $\mu \in \mathcal{M}(\Omega)$ (Theorem \ref{thm:general}) and $\mathcal{U}_\gamma\subset \mathcal{M}(\Omega)$, we obtain the desired conclusion.

    \item[(iii)] From part (i) we have $\langle \mu, u^\gamma\rangle = \mathcal{C}(\gamma)$ for all $\mu\in \mathcal{U}_0(\Omega)$. By Corollary \ref{cor:concave_sln} $\gamma\mapsto u^\gamma(\cdot)$ is decreasing and concave. Take $\sigma\in \mathcal{U}_0(\Omega)$, then $\langle \sigma, u^0\rangle_\Omega = 0$ from part (i). If $\alpha \leq \beta$ as real numbers then $u^\alpha - u^\beta  \geq 0$, thus $\langle \sigma, u^\alpha - u^\beta  \rangle_\Omega\geq 0$, hence
    \begin{equation*}
           \langle \sigma, u^\alpha - u^0\rangle_\Omega - \langle \sigma, u^\beta - u^0\rangle_\Omega \geq 0 \quad\Longrightarrow\quad \mathcal{C}(\alpha)\geq \mathcal{C}(\beta).
    \end{equation*}
    Therefore $\gamma\mapsto \mathcal{C}(\gamma)$ is decreasing. On the other hand, for $s\in (0,1)$ and $\alpha, \beta \in \R$ then $s u^\alpha + (1-s)u^\beta \leq u^{s\alpha+(1-s)\beta}$, hence the concavity of $\gamma\mapsto \mathcal{C}(\gamma)$ follows from part (i). 
    \item[(iv)] 
Let $u_\lambda\in \mathrm{C}(\overline{\Omega}_\lambda)$ be the solution to \eqref{eq:S_lambda} then $\lambda u_\lambda + H(x,Du_\lambda) - \varepsilon\Delta u_\lambda \leq 0$ in $\Omega_\lambda$. Using the strategy in \eqref{eq:form-1}, \eqref{eq:form-2} we write 
    \begin{equation*}
    \begin{cases}
        H_\phi(x,Du_\lambda(x)) - \varepsilon\Delta u_\lambda(x)\leq 0 \qquad\text{in}\;\Omega_\lambda, \\
        \phi(x,v) = L(x,v) -  \lambda u_\lambda(x)\in \Phi^+(\overline{\Omega}\times\R^n).
    \end{cases}
    \end{equation*}
    Using Corollary \ref{cor:usage-measures} (ii) we have $\langle \sigma_\lambda, \phi\rangle_{\Omega_\lambda} \geq 0$ and $\langle \sigma_\lambda, L\rangle_{\Omega_\lambda} = -c_{\Omega_\lambda}$ if $\sigma_\lambda \in \mathcal{M}(\Omega_\lambda)$, therefore
    \begin{equation*}
         \lambda \langle \sigma_\lambda,-c_{\Omega_\lambda} - \lambda u_\lambda\rangle_{\Omega_\lambda} = \left\langle \sigma_\lambda,  L(x,v) -  \lambda u_\lambda(x)\right\rangle_{\Omega_\lambda} \geq 0 
    \qquad\text{for all}\;\sigma_\lambda\in \mathcal{M}(\Omega_\lambda).
    \end{equation*}
    In other words, we have 
    \begin{equation*}
        \left\langle \sigma_\lambda , u_\lambda(x) + \frac{c_{\Omega_\lambda}}{ \lambda }\right\rangle_{\Omega_\lambda} \leq 0 \qquad\text{for all}\;\sigma_\lambda\in \mathcal{M}(\Omega_\lambda).
    \end{equation*}
    Let $\widetilde{\sigma}_\lambda$ be the scaling measure obtained from $\sigma_\lambda$ as in Definition \ref{defn:scaledown}. We have
    \begin{equation}\label{eq:V-lambda-measure}
        \left\langle \widetilde{\sigma}_\lambda, u_\lambda((1+r(\lambda))x) + \frac{c_\Omega}{ \lambda }\right\rangle_{\Omega} + \frac{c_{\Omega_\lambda} - c_\Omega}{ \lambda } \leq 0\qquad\text{for all}\;\sigma_\lambda\in \mathcal{M}(\Omega_\lambda).
    \end{equation}
    If $\sigma \in \mathcal{V}_\gamma(\Omega)$, we can find a sequence $\sigma_\lambda\in \mathcal{M}(\Omega_\lambda)$ such that $\sigma_\lambda\rightharpoonup \sigma$ in measures (thanks to Lemma \ref{lem:u_gamma}). Let $\sigma = \sigma_\lambda$ in \eqref{eq:V-lambda-measure}, then as $\lambda\to 0^+$ we obtain the conclusion. 
\end{itemize}
\end{proof}

\begin{proof}[Proof of Theorem~\ref{cor:back_forth}] Using the concavity of $\gamma\mapsto \mathcal{C}(\gamma)$, the one-sided derivatives $\mathcal{C}'_{\pm}(\gamma)$ exists everywhere. From Lemma \ref{lem:sigma0} we have
\begin{align}
   \langle \mu_\gamma, (-x,v)\cdot \nabla L(x,v)\rangle_\Omega &\geq -\frac{\mathcal{C}(\gamma)}{\gamma} \qquad\forall\;\mu \in \mathcal{U}_\gamma(\Omega), \gamma>0 \label{eq:C-gamma-1}\\
   \langle \mu_\gamma, (-x,v)\cdot \nabla L(x,v)\rangle_\Omega &\leq -\frac{\mathcal{C}(\gamma)}{\gamma} \qquad\forall\;\mu \in \mathcal{U}_\gamma(\Omega), \gamma<0. \label{eq:C-gamma-2}
\end{align}
Take a subsquence $\mu_\gamma \rightharpoonup \mu_+$ for some $\mu_+\in \mathcal{M}(\Omega)$ as $\gamma\to 0^+$ we obtain
\begin{equation*}
     \langle \mu_+, (-x,v)\cdot \nabla L(x,v)\rangle_\Omega \geq -\mathcal{C}'_+(0).
\end{equation*}
Similarly, take a subsquence $\mu_\gamma \rightharpoonup \mu_-$ for some $\mu_-\in \mathcal{M}(\Omega)$ as $\gamma\to 0^-$ we obtain
\begin{equation*}
     \langle \mu_-, (-x,v)\cdot \nabla L(x,v)\rangle_\Omega \leq -\mathcal{C}'_-(0).
\end{equation*}
As a concave function, we have $\mathcal{C}'_+(0) \leq \mathcal{C}'_-(0)$, therefore
\begin{equation*}
    \langle \mu_-, (-x,v)\cdot \nabla L(x,v)\rangle_\Omega \leq -\mathcal{C}'_-(0) \leq  -\mathcal{C}'_+(0) \leq  \langle \mu_+, (-x,v)\cdot \nabla L(x,v)\rangle_\Omega. 
\end{equation*}
If $c'(0)$ exists, then since
\begin{equation}\label{eq:c-derivative-constant}
    \langle \mu, (-x,v)\cdot \nabla L(x,v)\rangle_\Omega = c'(0) \qquad\text{for all}\; \mu\in \mathcal{M}(0),
\end{equation}
we deduce that $-\mathcal{C}'(0) = c'(0)$. Furthermore, from \eqref{eq:C-gamma-1}, \eqref{eq:C-gamma-2} and \eqref{eq:c-derivative-constant} we obtain
\begin{equation*}
    -\frac{\mathcal{C}(\gamma)}{\gamma} = c'(0) \qquad\text{for all}\;\gamma.
\end{equation*}
Therefore $\mathcal{C}(\gamma) = -\gamma c'(0)$.
\end{proof}

\section{Remarks on the differentiability of the additive eigenvalue map}
\label{sec:rem}
Let $r(\lambda) = \lambda$ for $\lambda\in (-\varepsilon_0, \varepsilon_0)$. 
While Corollary \ref{cor:diff_ae} guarantees that $\lambda\mapsto c(\lambda)$ is differentiable except for a countable set, the following question remains open.

\begin{quest} 
Can we show that $\lambda\mapsto c(\lambda)$ is indeed differentiable everywhere?
\end{quest}

In this section, we demonstrate the smoothness of $\lambda\mapsto c(\lambda)$ for constant $f$, discuss its semi-convexity for semi-concave data, and briefly outline the smoothness of $\lambda\mapsto c(\lambda)$ for $p=2$, with an explicit formula for $c'(0)$ using the Hopf-Cole transformation. Investigation of the full regime $1<p\leq 2$ is planned for future work.


\subsection{Smoothness with constant data}

\begin{proof}[Proof of Theorem~\ref{lem:example}] Without loss of generality we can assume $f\equiv 0$, then $L(x,v) = C_p|v|^q$ where $C_p = p^{-1/q}(p-1)$ and $p^{-1} + q^{-1} = 1$. By definition of $\mathcal{M}(\Omega)$ we have 
\begin{equation*}
    -c(0) =  \langle \mu, L\rangle_\Omega = C_p \left\langle \mu, |v|^q\right\rangle_\Omega \qquad\text{for any}\;\mu\in \mathcal{M}(\Omega).
\end{equation*}
Since $(-x,v)\cdot \nabla L(x,v) = qC_p|v|^{q}$, we have
\begin{equation*}
    \big\langle \mu,  (-x,v)\cdot \nabla L(x,v)\big\rangle_\Omega = qC_p \left\langle \mu, |v|^q\right\rangle_\Omega = -qc(0) \qquad\text{for all}\;\mu \in \mathcal{M}(\Omega).
\end{equation*}
In view of Theorem \ref{thm:limit} we conclude that $c'(0)$ exists and $c'(0) = -p(p-1)^{-1}c(0)$. As the argument can be done for any $\Omega_\lambda$ we obtain $\lambda\mapsto c(\lambda)$ is differentiable everywhere and
\begin{equation}\label{eq:f=0_case}
    c'(\lambda) = -p(p-1)^{-1}c(\lambda) = -qc(\lambda).
\end{equation}
From Corollary \ref{cor:diff_ae}, we see that $\lambda\mapsto c(\lambda)$ is continuously differentiable. Equation \eqref{eq:f=0_case} implies that $\lambda\mapsto c(\lambda)$ is $\mathrm{C}^\infty$, with $c^{(k)}(\lambda) = (-q)^kc(\lambda)$ for $k\in \mathbb{N}$.
\end{proof}

\subsection{Semiconvexity with semiconcave data}

Another equivalent definition of a semiconvex function is as follows: there is some $\tau > 0$ such that
\begin{equation*}
    u\big(sx+(1-s)y\big) \leq su(x) + (1-s)u(y) + s(1-s)\frac{|x-y|^2}{2\tau}, \qquad x,y\in I, s\in [0,1].
\end{equation*}

\begin{proof}[Proof of Theorem \ref{prop:semiconvex}] As usual we only need to consider $r(\lambda) = \lambda$. Let $\alpha<\eta<\beta$ and the corresponding $\left(w_\alpha,c_\alpha\right),\left(w_\eta,c_\eta\right),\left(w_\beta,c_\beta\right)$ be solutions and eigenvalues to the ergodic problem where $\eta = s\alpha+(1-s)\beta$ for some $s\in [0,1]$. Let us define for $\lambda\in \R$ the function
\begin{equation*}
    \widehat{w}_\lambda(x) = \left(\frac{1+\eta}{1+\lambda}\right)^2w_\lambda\left(\left(\frac{1+\lambda}{1+\eta}\right)x\right), \qquad x\in \overline{\Omega}_\lambda.
\end{equation*}
Using that with $\lambda = \alpha,\beta$ we obtain
\begin{equation*}
    \begin{cases}
    H\left(\left(\frac{1+\alpha}{1+\eta}\right)x, \left(\frac{1+\eta}{1+\alpha}\right) D\widehat{w}_\alpha(x)\right) - \varepsilon \Delta \widehat{w}_\alpha(x)
    \leq c(\alpha) &\qquad\text{in}\; (1+\eta)\Omega,\vspace{0.2cm}\\ 
    H\left(\left(\frac{1+\beta}{1+\eta}\right)x, \left(\frac{1+\eta}{1+\alpha}\right)
    D\widehat{w}_\beta(x)\right) - \varepsilon \Delta \widehat{w}_\beta(x) \leq c(\beta) &\qquad\text{in}\; (1+\eta)\Omega.
    \end{cases}
\end{equation*}
Let $\tilde{\alpha} = \frac{1+\alpha}{1+\eta}$ and $\tilde{\beta} = \frac{1+\beta}{1+\eta}$ then $s\tilde{\alpha}+(1-s)\tilde{\beta}=1$. We can write the equations as 
\begin{equation*}
    \begin{cases}
    H\left(\tilde{\alpha} x, \frac{1}{\tilde{\alpha}} D\widehat{w}_\alpha(x)\right) - \varepsilon \Delta \widehat{w}_\alpha(x)
    \leq c(\alpha) &\qquad\text{in}\; (1+\eta)\Omega,\vspace{0.2cm}\\ 
    H\left(\tilde{\beta} x, \frac{1}{\tilde{\beta}}
    D\widehat{w}_\beta(x)\right) - \varepsilon \Delta \widehat{w}_\beta(x) \leq c(\beta) &\qquad\text{in}\; (1+\eta)\Omega.
    \end{cases}
\end{equation*}
Let us define $\widehat{w} = s\widehat{w}_\alpha + (1-s)\widehat{w}_\beta \in \mathrm{C}(\overline{\Omega}_\eta)$. Using $H(x,\xi) = |\xi|^p - f(x)$, we compute heuristically, assuming $\widehat{w}_{\alpha}$ and $\widehat{w}_\beta$ are differentiable
\begin{equation*}
\begin{aligned}
    &sc(\alpha) + (1-s)c(\beta) 
    \geq sH\left(\tilde{\alpha} x, \frac{1}{\tilde{\alpha}} D\widehat{w}_\alpha(x)\right)  + (1-s)H\left(\tilde{\beta} x, \frac{1}{\tilde{\beta}}
    D\widehat{w}_\beta(x)\right) - \varepsilon \Delta \widehat{w}(x) \\
     &\qquad\qquad = s\left|\frac{D\widehat{w}_\alpha(x)}{\tilde{\alpha}}\right|^p +(1-s) \left|\frac{D\widehat{w}_\beta(x)}{\tilde{\beta}}\right|^p - sf(\tilde{\alpha}x ) - (1-s)f(\tilde{\beta}x) - \varepsilon \Delta\widehat{w}(x)\\
     & \qquad\qquad \geq s\left|\frac{D\widehat{w}_\alpha(x)}{\tilde{\alpha}}\right|^p +(1-s) \left|\frac{D\widehat{w}_\beta(x)}{\tilde{\beta}}\right|^p - f(x) - s(1-s)\left|\frac{\alpha-\beta}{1+\eta}\right||x|^2 - \varepsilon \Delta \widehat{w}(x)
\end{aligned}
\end{equation*}
where we use the semi-concavity of $f(\cdot)$ and $s\tilde{\alpha} + (1-s)\tilde{\beta} = 1$ in the last inequality. We note that the rigorous argument using viscosity solution can be done, for example, following the strategy of \cite[Lemma 2.7]{ishii_vanishing_2017-1}. We use H\"older inequality to get, for $p>2$ and $a,b,x,y > 0$ and $s\in (0,1)$ that
\begin{align*}
     \left(s\frac{a^p}{x^p} + (1-s)\frac{b^p}{y^p}\right)\big(sx^2+(1-s)y^2\big) \big(sx+(1-s)y\big)^{p-2} \geq (sa+(1-s)b)^p.
\end{align*}
Applying this inequality, we obtain
\begin{equation*}
\begin{aligned}
    \left(s\left|\frac{D\widehat{w}_\alpha(x)}{\tilde{\alpha}}\right|^p +(1-s) \left|\frac{D\widehat{w}_\beta(x)}{\tilde{\beta}}\right|^p \right)  \geq \frac{|D\widehat{w}(x)|^p}{s\tilde{\alpha}^2 + (1-s)\tilde{\beta}^2} = \left|\frac{1}{\theta_s} D\widehat{w}(x)\right|^p
\end{aligned} 
\end{equation*}
where 
\begin{equation}\label{eq:theta_s}
    \theta_s = \left(s\tilde{\alpha}^2 + (1-s)\tilde{\beta}^2\right)^{1/p} = \left(1+s(1-s)\frac{(\beta-\alpha)^2}{(1+\eta)^2}\right)^{1/p} \in (1,+\infty).
\end{equation}
We deduce that
\begin{equation*}
    \left|\frac{1}{\theta_s} D\widehat{w}(x)\right|^p - f(x)-\varepsilon\Delta \widehat{w}(x)\leq sc(\alpha) + (1-s)c(\beta) + s(1-s)\left|\frac{\alpha-\beta}{1+\eta}\right|^2|x|^2 \qquad\text{in}\;\Omega_\eta
\end{equation*}
or in other words, we have
\begin{equation*}
    H\left(x, \frac{1}{\theta_s} D\widehat{w}(x)\right) - \varepsilon\Delta \widehat{w}(x) \leq sc(\alpha) + (1-s)c(\beta) + s(1-s)\left|\frac{\alpha-\beta}{1+\eta}\right|^2|x|^2 \qquad\text{in}\;\Omega_\eta.
\end{equation*}
Therefore $H_\phi(x,D\widehat{w}(x)) - \varepsilon\Delta \widehat{w}(x) \leq 0$ in $\Omega_\eta$ where 
\begin{equation*}
    \phi(x,v) = L(x,\theta_s v) + sc(\alpha)+(1-s)c(\beta)  + s(1-s)\left|\frac{\alpha-\beta}{1+\eta}\right|^2|x|^2 \in \Phi^+(\overline{\Omega}_\eta\times\R^n)    
\end{equation*}
thanks to the separable form $H(x,\xi) = |\xi|^p - f(x)$. We, therefore, deduce that
\begin{align*}
    \langle \mu, L(x, \theta_s v)\rangle_{\Omega_\eta} + sc(\alpha)+(1-s)c(\beta)  + s(1-s)\left|\alpha-\beta\right|^2\left\langle \mu, \frac{|x|^2}{1+\eta}\right\rangle_{\Omega_\eta}\geq 0 
\end{align*}
for all $\mu \in \mathcal{M}(\Omega_\eta)$, where $\mathcal{M}(\Omega_\eta)$ is the set of minimizing Mather measures on $\overline{\Omega}_\eta\times\R^n$ as defined in Definition \ref{def:M_0}. We observe that 
\begin{equation*}
    \left\langle \mu, \frac{|x|^2}{1+\eta}\right\rangle_{\Omega_\eta} = \int_{\overline{\Omega}_\eta\times\R^n} \left|\frac{x}{1+\eta}\right|^2\;d\mu (x,v) \leq \widehat{C}_1(\overline{\Omega})
\end{equation*}
where $\widehat{C}_1(\overline{\Omega})$ depends only on the size of $\Omega$. Using $\langle \mu, L(x,  v)\rangle_{\Omega_\eta} = -c(\eta)$ we have
\begin{equation*}
\begin{aligned}
    \langle \mu, L(x, \theta_s v) - L(x,v)\rangle_{\Omega_\eta} &+ sc(\alpha)+(1-s)c(\beta) + C_1(\overline{\Omega}) s(1-s)|\alpha-\beta|^2\geq c(\eta) 
\end{aligned}
\end{equation*}
for all $\mu \in \mathcal{M}(\Omega_\eta)$. Using \eqref{eq:theta_s}, we estimate
\begin{equation*}
    L(x,\theta_s v) - L(x,v) = |\theta_s v|^q  - |v|^q = |v|^q (|\theta_s|^q - 1) = |v|^q \left[\left(1+ s(1-s)\frac{|\beta-\alpha|^2}{(1+\eta)^2}\right)^\frac{q}{p} - 1\right]
\end{equation*}
Therefore
\begin{equation*}
    \langle \mu, L(x,\theta_s v) - L(x,v) \rangle_{\Omega_\eta} = \langle \mu, |v|^q\rangle_{\Omega_\eta} \left[\left(1+ s(1-s)\frac{|\beta-\alpha|^2}{(1+\eta)^2}\right)^\frac{q}{p} - 1\right].
\end{equation*}
Using Bernoulli's inequality $(1+x)^r \leq 1+rx$ if $r\in (0,1)$ and $x>-1$, we deduce that
\begin{equation*}
    \left[\left(1+ s(1-s)\frac{|\beta-\alpha|^2}{(1+\eta)^2}\right)^\frac{q}{p} - 1\right]  \leq \left(\frac{q}{p}\right)s(1-s)\frac{|\beta-\alpha|^2}{(1+\eta)^2}
\end{equation*}
since $p/q < 1$ as $q\leq 2 < p$. 
\medskip

Using the fact that $\langle \mu, L\rangle_{\Omega_\eta} = -c(\eta)$ for $\mu\in \mathcal{M}(\Omega_\eta)$ and $L(x,v) = C_p|v|^q + f(x)$ we deduce that $\langle \mu, C_q|v|^q \rangle_{\Omega_\eta} + \langle \mu, f\rangle_{\Omega_\eta} = - c(\eta)$ and thus
\begin{equation*}
    \left\langle \mu, |v|^q\right\rangle_{\Omega_\eta}  = -\frac{c(\eta) + \langle \mu, f\rangle_{\Omega_\eta}}{C_q} \leq \widehat{C}_2(\overline{U}, f, q):= \frac{1}{C_q}\left(\max_{\overline{U}} |f| + C_M(\overline{U})\right)
\end{equation*}
where $C_M(\overline{U})$ is the constant defined in Corollary \ref{cor:bound_delta-u_delta} and $U$ is the bigger domain where all variations $\Omega_\eta \subset U$. Therefore we obtain 
\begin{equation*}
    c(\eta) \leq sc(\alpha) + (1-s)c(\beta) + \left[\widehat{C}_2(\overline{U}, f, q) \left(\frac{q}{p}\right) + \widehat{C}_1(\overline{U}) \right]s(1-s)|\beta-\alpha|^2 
\end{equation*}
for all $\alpha<\beta$ and $\eta \in (\alpha, \beta)$ with $\eta = s\alpha + (1-s)\beta$ for $s\in [0,1]$.
\end{proof}

\subsection{The quadratic case}\label{sec:the quad case}

The differentiability of $\lambda\mapsto c(\lambda)$ is simpler when $p=2$, owing to the connection between the quadratic Hamilton-Jacobi equation and linear elliptic equation via the Hopf-Cole transformation. For the existence of an eigenvalue and related well-posedness in the $p=2$ case, see \cite{lasry_nonlinear_1989} or the Appendix of \cite{YuTu2021large}. Here, we present a formal argument for the existence and representation of $c'(0)$. A comprehensive examination of $1<p\leq 2$ is planned for future investigation.

We recall that $r(\lambda) = \lambda$ and $\Omega_\lambda = (1+\lambda)\Omega$. If $p=2$, the equation that defines the eigenvalue on $\Omega_\lambda$ is
\begin{equation}\label{eq:EPp=2}
    \begin{cases}
    \begin{aligned}
        |Dv(x)|^2 - f(x) - \varepsilon \Delta v(x) = c(\lambda) 
            &\qquad\text{in}\;\Omega_\lambda,\\
        v(x) = +\infty  &\qquad\text{on}\;\partial\Omega_\lambda.
    \end{aligned}
    \end{cases}
\end{equation}
Solutions to \eqref{eq:EPp=2} are unique up to adding a constant and bounded from below (see \cite{lasry_nonlinear_1989}). Let us choose the solution $\hat{v}_\lambda$ of \eqref{eq:EPp=2} such that
\begin{equation}\label{eq:normalized_v_lambda}
    \int_{\Omega_\lambda} \left|e^{-\hat{v}_\lambda(x)/\varepsilon}\right|^2 dx = 1.
\end{equation}
Using the Hopf-Cole transform, we define $w:\overline{\Omega}\to \R$ as $w(x) = e^{-\hat{v}_\lambda(x)/\varepsilon}$ for $x\in \Omega_\lambda$, leading to a linear problem
\begin{equation}\label{eq:v^eps_a}
 \begin{cases}
 \begin{aligned}
     -\varepsilon^2 \Delta w(x) + f(x)w(x) &= c(\lambda) w(x) &\qquad &\text{in}\;\Omega_\lambda,\\ \vspace{0.2cm}
     w(x) &= 0 &\qquad &\text{on}\;\partial\Omega_\lambda.
 \end{aligned}
 \end{cases}
\end{equation}
Therefore, $c(\lambda)$ is an eigenvalue of the linear operator $\mathcal{L}[w] = (-\varepsilon^2 \Delta + f)w$ on $\Omega_\lambda$ with Dirichlet boundary condition. Furthermore, by using \eqref{eq:c(0)}, it is clear that $c(\lambda)$ is the principal eigenvalue of the linear problem \eqref{eq:v^eps_a}. Thus, it is a simple eigenvalue, and it admits a variational presentation
\begin{equation}\label{eq:new_c(lambda)}
    c(\lambda) = \min \left\lbrace \int_{\Omega_\lambda} \left(\varepsilon^2 |Du(x)|^2  + f(x)|u(x)|^2\right)dx: u\in H^1_0(\Omega_\lambda), \Vert u\Vert_{L^2(\Omega_\lambda)}=1 \right\rbrace.
\end{equation}
Let $w_\lambda$ be the unique solution to \eqref{eq:v^eps_a} with $\Vert w_\lambda\Vert_{L^2(\Omega_\lambda)} = 1$. \medskip

Using tools from \emph{shape analysis}, for instance, see \cite{delfour_shapes_2011,  henrot_extremum_2006, henrot_variation_2005, sz} and the references therein, we can obtain some information about the regularity of $\lambda\mapsto c(\lambda)$ and $\lambda\mapsto w_\lambda(\cdot)$. This problem is often examined in the literature within a slightly broader context. We assume that each point $x$ in the domain $\overline{\Omega}$ is changed under a smooth, autonomous (time-independent) velocity field $\mathbf{V}(x) \in \mathrm{C}^k(\overline{\Omega};\R^n)$. Consider the following transformation, which is close to a perturbation of the identity (see \cite{sz}), defined by $T_{\lambda}(x) = x + \lambda\mathbf{V}(x)$ for $x\in \overline{\Omega}$ where $\lambda\in I_0\subset\R$ is a neighborhood of $0$. We denote $\Omega_\lambda = T_\lambda(\Omega)$ (our setting is a special case with $\mathbf{V}(x) = x$ for $x\in \R^n$).\medskip 

For simplicity, we henceforth assume that $f = 0$. Using \cite[Theorem 2.5.1]{henrot_extremum_2006} we have $c'(0)$ exists and also $\lambda\mapsto w_\lambda(\cdot))$ is differentiable, as well as $w'(x) = \frac{d}{d\lambda}\left( w_\lambda(x)\right)\big|_{\lambda=0}$ exists and is a function in $H^1(\Omega)$. We will derive a formula for $c'(0)$ using solution $w_0$ to \eqref{eq:v^eps_a}. \medskip

Denote by $\mathbf{n}(x)$ the unit outward normal vector at $x\in \partial\Omega$. We use differentiation with respect to $\lambda$ in \cite[Section 2.31]{sz}. We differentiate $\Vert w_\lambda\Vert_{L^2(\Omega_\lambda)} = 1$ with respect to $\lambda$ to obtain that 
\begin{align}\label{eq:ww'=0}
    0 = \int_\Omega 2w_0(x)w'(x)\;dx + \int_{\partial\Omega} |w_0(x)|^2 \mathbf{V}(x)\cdot \mathbf{n}(x)\;dx = \int_\Omega 2w_0(x)w'(x)\;dx,
\end{align}
since $w_0 = 0$ on $\partial\Omega$. Next, we differentiable $c(\lambda)$ in \eqref{eq:new_c(lambda)} with $f = 0$ to obtain that 
\begin{align}
    c'(0) 
    &= 2\varepsilon^2\int_\Omega  Dw_0(x)\cdot Dw'(x)\;dx  +\varepsilon^2 \int_{\partial\Omega}|Dw_0(x)^2| \mathbf{V}(x)\cdot \mathbf{n}(x)\;dS(x)\nonumber\\
    &=  2\varepsilon^2\int_{\partial\Omega}  w'(x)\frac{\partial w_0}{\partial \textbf{n}}(x)\;dS(x) +\varepsilon^2\int_{\partial\Omega} |Dw_0(x)^2| \mathbf{V}(x)\cdot \mathbf{n}(x)\;dS(x) \label{eq:compute_c(0)-1},
\end{align}
where we use integration by parts for $-\varepsilon^2 \Delta w_0 = c(0)w_0$ and \eqref{eq:ww'=0}. On the other hand, by differentiating $-\varepsilon^2 \Delta w_\lambda = c(\lambda)w_\lambda$, the equation for $w'$ reads
\begin{equation*}
    -\varepsilon \Delta w' = c'(0)w_0 + c(0)w' \qquad\text{in}\;\Omega.
\end{equation*}
Multiply both sides by $w_0$, using integration by parts with \eqref{eq:ww'=0} and $\Vert w_0\Vert_{L^2(\Omega)}=1$ we obtain
\begin{equation}\label{eq:compute_c(0)-2}
    c'(0) = -\varepsilon^2 \int_\Omega w_0(x)\Delta w'(x)\;dx = \varepsilon^2 \int_\Omega w' \frac{\partial w_0}{\partial \textbf{n}}\;dS(x).
\end{equation}
From \eqref{eq:compute_c(0)-1} and \eqref{eq:compute_c(0)-2} we deduce that 
\begin{equation*}
    \int_{\partial\Omega} w' \frac{\partial w_0}{\partial \textbf{n}}\;dS(x) = -\int_{\partial \Omega } |Dw_0|^2 \textbf{V}\cdot \textbf{n}\;dS(x)
\end{equation*}
and thus, we conclude that 
\begin{equation*}
    c'(0) = -\varepsilon^2 \int_{\partial \Omega } |Dw_0|^2 \textbf{V}\cdot \textbf{n}\;dS(x) = -\varepsilon^2 \int_{\partial\Omega} \left|\frac{\partial w_0}{\partial \textbf{n}}(x)\right|^2 \left(x\cdot \textbf{n}\right)\;dS(x).
\end{equation*}

We also have higher derivatives of $\lambda\mapsto c(\lambda)$, as in \cite{henrot_extremum_2006}.
\begin{cor}\label{thm:p=2} If $p=2$, $f\equiv 0$ and $r(\lambda) = \lambda$, then the map $c\mapsto c(\lambda)$ is twice differentiable everywhere.
\end{cor}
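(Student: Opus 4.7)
\textbf{Proof proposal for Corollary~\ref{thm:p=2}.} The plan is to exploit the linearization already set up in Section~\ref{sec:the quad case}: via the Hopf-Cole substitution $w_\lambda = e^{-\hat v_\lambda/\varepsilon}$, the quadratic Hamilton--Jacobi eigenvalue $c(\lambda)$ coincides with the principal Dirichlet eigenvalue of the linear self-adjoint operator $\mathcal L = -\varepsilon^2 \Delta$ on $\Omega_\lambda = (1+\lambda)\Omega$ (since $f\equiv 0$), with normalized eigenfunction $w_\lambda \in H^1_0(\Omega_\lambda)$ satisfying $\|w_\lambda\|_{L^2(\Omega_\lambda)} = 1$. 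Twice differentiability of $\lambda\mapsto c(\lambda)$ will then be read off from the second-order shape sensitivity of this principal eigenvalue under the smooth family of perturbations $T_\lambda(x) = x+\lambda\mathbf{V}(x)$ with $\mathbf{V}(x) = x$.

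The key steps, in order, would be the following. First, pull back the eigenvalue problem on $\Omega_\lambda$ to the fixed reference domain $\Omega$ through $T_\lambda$. This yields a $\lambda$-dependent family of uniformly elliptic operators on $H^1_0(\Omega)$ whose coefficients are $\mathrm{C}^\infty$ (in fact polynomial) functions of $\lambda$, with the same principal eigenvalue $c(\lambda)$. Second, since $c(0)$ is a simple, isolated eigenvalue of a self-adjoint operator, apply Kato--Rellich analytic perturbation theory (or the shape-calculus package of \cite[Ch.~2 and Ch.~5]{henrot_extremum_2006}) to conclude that $\lambda\mapsto c(\lambda)$ and $\lambda\mapsto w_\lambda$ are of class $\mathrm{C}^\infty$ in a neighborhood of $0$, so in particular $c''(0)$ exists. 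Third, to make the proof self-contained one can differentiate the Rayleigh quotient
\begin{equation*}
    c(\lambda) = \int_{\Omega_\lambda} \varepsilon^2 |Dw_\lambda(x)|^2\,dx, \qquad \|w_\lambda\|_{L^2(\Omega_\lambda)} = 1
\end{equation*}
twice using Hadamard's formula for derivatives of domain integrals, expressing $c''(0)$ in terms of $w_0$, the first shape derivative $w'$ (already computed in the section), and the second shape derivative $w''$; the latter exists as a function in $H^1(\Omega)$ because it solves the inhomogeneous equation $-\varepsilon^2\Delta w'' - c(0)w'' = c''(0) w_0 + 2c'(0) w'$ whose right-hand side satisfies the Fredholm solvability condition, using simplicity of $c(0)$ and the normalization $\langle w_0,w'\rangle_{L^2(\Omega)} = 0$. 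Finally, since the argument applies at any base point $\lambda_0$ after relabelling, twice differentiability holds everywhere in the domain of $\lambda \mapsto c(\lambda)$.

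The main obstacle is the bookkeeping required to make the shape calculus rigorous at second order: one needs $\partial\Omega\in \mathrm{C}^2$ together with boundary regularity of $w_0$ (e.g.\ $w_0\in \mathrm{C}^2(\overline\Omega)$ via standard Schauder theory for the Dirichlet problem) to justify the Hadamard boundary integrals, and one must verify that $w'$ itself is regular enough near $\partial\Omega$ to allow a second differentiation. Once these ingredients are in place, the twice differentiability is immediate and could even be strengthened to real analyticity of $\lambda\mapsto c(\lambda)$; we state only the $\mathrm{C}^2$ conclusion since that is all the corollary requires and all that the formal argument preceding it hints at.
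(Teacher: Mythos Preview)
Your proposal is correct and follows essentially the same route as the paper: after the Hopf--Cole linearization identifies $c(\lambda)$ with the principal Dirichlet eigenvalue of $-\varepsilon^2\Delta$ on $\Omega_\lambda$, the paper simply cites \cite{henrot_extremum_2006} for the existence of higher-order shape derivatives, and your outline (pull back to a fixed domain, invoke simplicity of the principal eigenvalue and Kato--Rellich or shape-calculus perturbation theory) is exactly the content behind that citation. Your write-up supplies more detail than the paper does, but the approach is the same.
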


\addtocontents{toc}{\protect\setcounter{tocdepth}{1}}

\appendix
\addcontentsline{toc}{section}{Appendix}

\addtocontents{toc}{\protect\setcounter{tocdepth}{0}}

\section{State-constraint solutions for Hamilton--Jacobi equation}
\label{ap:a}

\subsection{Assumptions}
We present this section as a self-contained section. For generality, we state the results under a more general set of assumptions.
\begin{description}[style=multiline, labelwidth=1cm, leftmargin=2.0cm]
    \item[\namedlabel{itm:A21}{$(\mathcal{H}_1)$}] $\xi\mapsto H(x,\xi)$ is convex for every fixed $x\in \overline{\Omega}$.
    \item[\namedlabel{itm:A22}{$(\mathcal{H}_2)$}] There exists $p>2$ such that, for every $R>0$, there are $0< a_R \leq 1\leq b_R$ such that for $x,y\in B_R$ and all $\xi,\xi_1, \xi_2\in \mathbb{R}^n$ there holds
    \begin{align}
        & a_R |\xi|^p - b_R \leq H(x,\xi) \leq \Lambda_1(|\xi|^p + 1)\\
        & |H(x,\xi) - H(y,\xi)| \leq (\Lambda_1|\xi|^p + b_R)|x-y|\\
        & |H(x,\xi_1) - H(x,\xi_2)| \leq \Lambda_1(|\xi_1|+|\xi_2|+1)^{p-1}|\xi_1-\xi_2|.
     \end{align}
    \item[\namedlabel{itm:A23}{$(\mathcal{H}_3)$}] The matrix $a(x):\mathbb{R}^n\to \mathbb{M}^{n} = \mathbb{R}^{n\times n}$ has a Lipschitz square root $\sigma: \mathbb{R}^n\to \mathbb{R}^{k\times n}$ for $k\in \mathbb{N}$ such that $a(x) = \sigma^T(x)\sigma(x)$ where
    \begin{equation}\label{condA3}
        |\sigma(x)|\leq \Lambda_2, \qquad |\sigma(x) - \sigma(y)| \leq \Lambda_2|x-y|, \qquad x,y \in \R^n
    \end{equation}
    for some constant $\Lambda_2 > 0$.
\end{description}

\subsection{Existence and H\"older estimate}

\begin{defn}\label{defn:1} We consider the following equation with $\delta \geq 0$:
\begin{equation}\label{HJ-static}
    \delta u(x) + H(x,Du(x)) - \varepsilon\,\mathrm{Tr}\big(a(x)D^2 u(x)\big) = 0 \qquad\text{in}\;\Omega \tag{HJ}.
\end{equation}
We say that
\begin{itemize}
\item[(i)] $v\in \mathrm{BUC}(\Omega;\mathbb{R})$ is a viscosity subsolution of \eqref{HJ-static} in $\Omega$ if for every $x\in \Omega$ and $\varphi\in \mathrm{C}^2(\Omega)$ such that $v-\varphi$ has a local maximum over $\Omega$ at $x$ then 
\begin{equation*}
    \delta v(x) + H\big(x,D\varphi(x)\big) -\varepsilon\,\mathrm{Tr}\big(a(x)D^2\varphi (x)\big) \leq 0.
\end{equation*}

\item[(ii)] $v\in \mathrm{BUC}(\overline\Omega;\mathbb{R})$ is a viscosity supersolution of \eqref{HJ-static} on $\overline\Omega$ if for every $x\in \overline{\Omega}$ and $\varphi\in \mathrm{C}^2(\overline{\Omega})$ such that $v-\varphi$ has a local minimum over $\overline{\Omega}$ at $x$ then
\begin{equation*}
    \delta v(x) + H\big(x,D\varphi(x)\big) -\varepsilon\,\mathrm{Tr}\big(a(x)D^2\varphi(x)\big) \geq 0.
\end{equation*}
\end{itemize}

If $v$ is a viscosity subsolution to \eqref{HJ-static} in $\Omega$, and is a viscosity supersolution to \eqref{HJ-static} on $\overline{\Omega}$, i.e.,
\begin{equation}\label{state-def}
\begin{cases}
\delta v(x) + H(x,Dv(x)) - \varepsilon\,\mathrm{Tr}\big(a(x)D^2v(x)\big) \leq 0 &\quad\text{in}\; \Omega,\\
\delta v(x) + H(x,Dv(x))-  \varepsilon\,\mathrm{Tr}\big(a(x)D^2v(x)\big) \geq 0 &\quad\text{on}\; \overline{\Omega},
\end{cases}
\end{equation}
then we say that $v$ is a state-constraint viscosity solution of \eqref{HJ-static}. 
\end{defn}



We refer the readers to \cite{ bardi_optimal_1997,Barles1994, tran_hamilton-jacobi_2021} for the equivalent definition of viscosity solution using super-differential $ J_{\mathcal{O}}^{2,\pm}$ and sub-differential $D^{\pm}$. Existence, wellposedness, and gradient bound on the solution of \eqref{state-def} and a description of state-constraint boundary condition can be found in \cite{capuzzo-dolcetta_hamiltonjacobi_1990, kim_state-constraint_2020, soner_optimal_1986}. The existence of solutions to \eqref{state-def} can be established using Perron's method. We omit the proof of the following Theorem and refer the readers to \cite[Theorem 4.2]{armstrong_viscosity_2015}.

\begin{thm}\label{thm:Perron} Assume \ref{itm:A21}, \ref{itm:A22}, \ref{itm:A23}. Define 
\begin{equation}\label{eq:perron_def}
    u^\delta(x) = \sup \Big\lbrace w(x):w\in \mathrm{USC}(\overline{\Omega})\;\text{is a subsolution of} \;\eqref{state-def}\;\text{in}\;\Omega\Big\rbrace.
\end{equation}
Then $u^\delta\in \mathrm{C}^{0,1}_{\mathrm{loc}}(\Omega)\cap \mathrm{LSC}(\overline{
\Omega})$ and $u^0$ is a solution of \eqref{state-def}.
\end{thm}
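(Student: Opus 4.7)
The plan is to run the classical Perron method in the form originally due to Ishii, adapted to the state-constraint setting where subsolutions are tested on $\Omega$ while supersolutions are tested on $\overline{\Omega}$. First I would verify that the family in \eqref{eq:perron_def} is nonempty and that $u^\delta$ is finite. Using \ref{itm:H2} at $\xi=0$ (so that $|H(\cdot,0)|\leq b_R$), any sufficiently negative constant is a subsolution. For an upper bound when $\delta>0$, testing the supersolution inequality at an interior maximum of an arbitrary $w$ in the family with a constant test function yields $\delta w(x^\ast) \leq b_R$, so $w \leq b_R/\delta$; when $\delta=0$ one instead appeals to the existence of an eigenfunction produced by a vanishing-discount argument from the family at level $\delta>0$. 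Hence $u^\delta\colon \overline{\Omega}\to\mathbb{R}$ is well-defined and uniformly bounded.

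Next I would run the two envelope arguments. For the subsolution part, the upper semicontinuous envelope $(u^\delta)^\ast$ of a pointwise supremum of USC subsolutions is itself a subsolution of \eqref{HJ-static} in $\Omega$; by maximality in \eqref{eq:perron_def} this forces $u^\delta = (u^\delta)^\ast$, so $u^\delta$ is USC and is a subsolution in $\Omega$. For the supersolution part on $\overline{\Omega}$, I would argue by contradiction via the bump trick: if $(u^\delta)_\ast$ fails the supersolution inequality at some $x_0 \in \overline{\Omega}$ through a test function $\varphi \in C^2(\overline{\Omega})$ touching from below, then by continuity of $H$ and of $a$ there is a neighborhood $U$ of $x_0$ on which $\varphi+\eta$ is a strict classical subsolution for all small $\eta>0$; defining $\widetilde{u}$ to equal $\max(u^\delta, \varphi+\eta)$ on $U\cap \Omega$ and $u^\delta$ elsewhere produces a USC subsolution in $\Omega$ strictly larger than $u^\delta$ along a sequence approaching $x_0$, contradicting the definition of $u^\delta$. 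The state-constraint feature works in our favor here: even when $x_0\in\partial\Omega$, the bumped function is only required to satisfy the subsolution inequality inside $\Omega$, which is exactly what \eqref{state-def} demands.

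The regularity $u^\delta \in C^{0,1}_{\mathrm{loc}}(\Omega)$ follows from the superlinear coercivity $a_R|\xi|^p - b_R \leq H(x,\xi)$ in \ref{itm:H2}: for any bounded USC subsolution, a standard doubling-variables argument on $u(x)-u(y)-L|x-y|$, exploiting the second-order regularization together with $p>2$ to absorb the lower-order terms, produces a Lipschitz estimate on each compact subset of $\Omega$. The step I expect to be the main obstacle is lower semicontinuity up to the boundary, since in general the pointwise supremum of USC functions need not be LSC. The remedy, in the spirit of Theorem \ref{thm:Holder_est_nested_domain}, is to produce a uniform H\"older modulus for the entire family. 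Using the star-shapedness from \ref{itm:A1}, one compares any $w$ in the family with its rescaled copy $\widetilde{w}(x) = (1+\rho)^{-2} w((1+\rho)x)$ on a slightly smaller domain and absorbs the boundary error, which turns out to be of order $\rho^\alpha$ with $\alpha=(p-2)/(p-1)$, to obtain uniform H\"older continuity of every member of the family up to $\partial\Omega$. Consequently $u^\delta$ inherits this modulus and in particular lies in $\mathrm{LSC}(\overline{\Omega})$. Once $u^\delta=(u^\delta)^\ast=(u^\delta)_\ast$ on $\overline{\Omega}$, the sub- and supersolution properties established above combine to show that $u^\delta$ is the desired state-constraint viscosity solution of \eqref{state-def}.
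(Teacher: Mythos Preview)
The paper does not actually prove this theorem: immediately before the statement it says ``We omit the proof of the following Theorem and refer the readers to \cite[Theorem 4.2]{armstrong_viscosity_2015}.'' So there is no in-paper argument to compare against; your Perron outline is in the right spirit and is essentially what lies behind the cited reference.

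That said, there is one genuine mismatch in your sketch. For the $\mathrm{LSC}(\overline{\Omega})$ step you invoke the star-shapedness assumption \ref{itm:A1} and a rescaling $\widetilde{w}(x)=(1+\rho)^{-2}w((1+\rho)x)$ to manufacture a uniform boundary modulus. But Theorem~\ref{thm:Perron} is stated only under \ref{itm:H1}--\ref{itm:H3}; \ref{itm:A1} is not available here. The intended (and simpler) route is Theorem~\ref{thm:Holder}, which already asserts that \emph{every} USC subsolution of \eqref{HJ-static} lies in $\mathrm{C}^{0,\alpha}(\overline{\Omega})$ with a constant $C_0$ depending only on $\mathrm{diam}(\Omega)$, $p$, and the data in \ref{itm:H2}, \ref{itm:H3} --- in particular, \emph{not} on the individual subsolution. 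A supremum of a family sharing one H\"older modulus inherits that modulus, so $u^\delta\in \mathrm{C}^{0,\alpha}(\overline{\Omega})$ directly; this gives both USC and LSC on $\overline{\Omega}$ at once, and no geometric assumption on $\Omega$ is needed.

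Two smaller points. First, in your upper-bound step you write ``testing the supersolution inequality at an interior maximum''; you mean the subsolution inequality (since $w$ is a subsolution), and you should address the possibility that the maximum of $w$ lies on $\partial\Omega$, where no test is available. Once you have the uniform H\"older bound from Theorem~\ref{thm:Holder} this is easy to repair: bound $w$ at a fixed interior point and propagate via the modulus. Second, the statement's ``$u^0$'' is almost certainly a typo for ``$u^\delta$''; your reading is the correct one.
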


For the superquadratic Hamiltonian, any subsolution is at least H\"older continuous.

\begin{thm}\label{thm:Holder} Under assumptions \ref{itm:A21}, \ref{itm:A22}, \ref{itm:A23}, any upper semicontinuous subsolution $u$ of \eqref{HJ-static} is uniformly H\"older continuous up to the boundary, i.e., $u\in \mathrm{C}^{0,\alpha}(\overline{\Omega})$ with
\begin{equation*}
    |u(x) - u(y)| \leq C_0|x-y|^\alpha, \qquad \alpha = \frac{p-2}{p-1} \in (0,1)
\end{equation*}
where $C_0$ is a constant depending on the diameter of $\Omega,p$ and constants from \ref{itm:A22}, \ref{itm:A23}.
\end{thm}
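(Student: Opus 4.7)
The plan rests on the Ishii--Crandall doubling of variables technique, specialized to the superquadratic growth structure. The exponent $\alpha = (p-2)/(p-1)$ is not a free parameter: it is exactly the one at which, after testing with $\phi(x)=L|x-y|^\alpha$, the Hamiltonian term $|D\phi|^p$ and the second-order term $\varepsilon\,\mathrm{Tr}(a\,D^2\phi)$ carry the \emph{same} negative power of $|x-y|$, namely $|x-y|^{-p/(p-1)}$. This scaling resonance is the source of the estimate.

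\textbf{Step 1 (setup).} For $L > 0$ to be chosen large, I would set
\[
M_L \;:=\; \sup_{(x,y) \in \overline{\Omega} \times \overline{\Omega}} \Big\{\, u(x) - u(y) - L\,|x-y|^\alpha \,\Big\},
\]
and aim to show $M_L \leq 0$ once $L$ exceeds a threshold depending only on $\mathrm{diam}(\Omega)$, $p$, $\Lambda_1$, $\Lambda_2$ and $\|u\|_{L^\infty(\overline\Omega)}$. The conclusion is then immediate with $C_0 = L$.

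\textbf{Step 2 (reduction to an interior max).} Since $u$ is USC and $\overline\Omega$ is compact, if $M_L>0$ the supremum is attained at some $(\hat x,\hat y)$ with $\hat x\neq \hat y$. The subsolution test requires $\hat x \in \Omega$. I would force this by adding a penalty term of the form $-\mu\,\mathrm{dist}(x,\partial\Omega)^{-\theta}$ (admissible thanks to the $\mathrm{C}^2$ boundary of $\Omega$) or, alternatively, shift $\hat x$ inward along the interior normal using the interior sphere condition implied by \ref{itm:A1}; in either case the perturbation terms introduced are controlled uniformly and vanish in the limit of the perturbation parameter. Hölder continuity up to $\partial\Omega$ then follows by upper-semicontinuity of $u$.

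\textbf{Step 3 (the core inequality).} Assume $\hat x \in \Omega$. Then $\phi(x) := u(\hat y) + L|x - \hat y|^\alpha$ is $\mathrm{C}^2$ near $\hat x$ (since $\hat x \neq \hat y$) and touches $u$ from above there. The subsolution inequality gives
\[
\delta\, u(\hat x) + H\bigl(\hat x, D\phi(\hat x)\bigr) - \varepsilon\,\mathrm{Tr}\!\left(a(\hat x)\, D^2\phi(\hat x)\right) \;\leq\; 0.
\]
A direct computation yields $|D\phi(\hat x)| = L\alpha\,|\hat x - \hat y|^{\alpha-1}$ and $\|D^2\phi(\hat x)\| \leq C(p)\,L\,|\hat x-\hat y|^{\alpha-2}$. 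Invoking the lower bound on $H$ in \ref{itm:H2} and the boundedness of $\sigma$ in \ref{itm:H3},
\[
a_R\, (L\alpha)^p\, |\hat x - \hat y|^{(\alpha-1)p} \;\leq\; b_R + \delta\,\|u\|_\infty + C\,\varepsilon\,\Lambda_2^2\, L\,|\hat x - \hat y|^{\alpha - 2}.
\]
The key identity $(\alpha-1)p = \alpha - 2 = -p/(p-1)$ kicks in: dividing by $|\hat x - \hat y|^{\alpha - 2}$ and using $|\hat x - \hat y|\leq \mathrm{diam}(\Omega)$,
\[
a_R\,\alpha^p\, L^p \;\leq\; C\,\varepsilon\,\Lambda_2^2\, \alpha\, L + \bigl(b_R + \delta\,\|u\|_\infty\bigr)(\mathrm{diam}\,\Omega)^{2-\alpha},
\]
which is absurd once $L$ exceeds a threshold depending only on the stated quantities. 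Hence $M_L \leq 0$ as claimed.

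\textbf{Expected obstacle.} The delicate point is \textbf{Step 2}: controlling the location of $\hat x$ near the boundary. One has to verify that the penalty (or the inward shift) can be arranged so that the extra first- and second-order contributions are either absorbed by the superquadratic dominant term on the left, or shown to vanish in the limit. This is standard but technical and uses the $\mathrm{C}^2$ smoothness of $\partial\Omega$ together with the interior ball condition; I would follow the approach of \cite{capuzzo-dolcetta_hamiltonjacobi_1990} and \cite{armstrong_viscosity_2015}.
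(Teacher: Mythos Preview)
The paper does not actually prove this theorem; it simply refers the reader to \cite{armstrong_viscosity_2015} and \cite{BARLES201031} (and to \cite{lasry_nonlinear_1989} in the special case $a\equiv 1$, $H(x,\xi)=|\xi|^p-f(x)$). Your outline is precisely the argument one finds in those references: the scaling identity $(\alpha-1)p=\alpha-2=-p/(p-1)$ is the heart of the matter, and your Step~3 reproduces it correctly.

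Two caveats are worth flagging. First, your Step~2 invokes a $\mathrm{C}^2$ boundary and the interior sphere condition from~\ref{itm:A1}, but Theorem~\ref{thm:Holder} as stated assumes only \ref{itm:H1}--\ref{itm:H3}, none of which constrain $\partial\Omega$; in the cited works some mild domain regularity is indeed used for the boundary step, so this is more an imprecision in the paper's hypothesis list than a flaw in your plan. Second, your final constant carries a $\delta\|u\|_\infty$ term that the stated $C_0$ does not; for the \emph{solution} this is harmless because $|\delta u_\delta|$ is bounded a~priori in terms of $H(\cdot,0)$ (Proposition~\ref{pro:bound_on_sln}), but for an arbitrary USC subsolution a lower bound on $u$ is not free, and the references typically either state $C_0$ as depending on $\|\delta u\|_\infty$ or work with bounded subsolutions from the outset.
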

We refer the reader to \cite{armstrong_viscosity_2015} or \cite{BARLES201031} for the proof of this Theorem. We also refer the readers to \cite{lasry_nonlinear_1989} for a proof in the case $a(x)\equiv 1$ and $H(x,\xi) = |\xi|^p - f(x)$.

\subsection{Comparison principle for H\"older continuous solutions}

We further assume that 
\begin{description}[style=multiline, labelwidth=1cm, leftmargin=2.0cm]
    \item[\namedlabel{itm:A24}{$(\mathcal{H}_4)$}] $H(x,\xi) = \mathcal{H}(\xi) - f(x)$ where $f\in \mathrm{C}^1(\overline{\Omega})$ and $\mathcal{H}$ is homogeneous of degree $p>2$.
\end{description}

The uniqueness of the state-constraint solution to \eqref{HJ-static} follows from a comparison principle. It was first studied in \cite{soner_optimal_1986-1} for the first-order equation under an interior sphere assumption (see also \cite{BARLES201031} for a general nonlinear case with different assumptions). For our purpose of studying the scaling domains, we provide a simple proof using \ref{itm:A1} here for the readers' convenience. We note that in \ref{itm:A1}, the assumption $\Omega$ is star-shaped can be removed, that is, any bounded, open subset of $\mathbb{R}^n$ containing the origin that satisfies \eqref{eq:geometric-condition} for some $\kappa > 0$ is star-shaped (see \cite{tu_vanishing_2021}). 

\begin{thm}[Comparison principle]\label{CP continuous} 
Assume \eqref{eq:geometric-condition}, \ref{itm:A21}, \ref{itm:A22}, \ref{itm:A23}, \ref{itm:A24} and $\lambda>0$. Let $u,v\in \mathrm{C}(\overline{\Omega})$ be a viscosity subsolution in $\Omega$ and supersolution on $\overline{\Omega}$ of \eqref{HJ-static}, respectively. Assume that $u,v$ are H\"older continuous, i.e., belong to $\mathrm{C}^{0,\alpha}(\overline{\Omega})$ for some $\alpha \in (0,1]$, then $u(x)\leq v(x)$ for all $x\in \overline{\Omega}$.
\end{thm}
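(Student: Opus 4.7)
The plan is a contradiction argument via doubling of variables, after first using the star-shape hypothesis $(\mathcal{A})$ to extend the subsolution to an open neighborhood of $\overline{\Omega}$. This extension is needed because, in the state-constraint framework, the subsolution inequality is only available on the open set $\Omega$, while the maximizer of $u - v$ may land on $\partial\Omega$; the extension sidesteps this obstruction.

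Suppose for contradiction that $M := \max_{\overline{\Omega}}(u - v) > 0$. For $\theta \in (0,1)$ small, define the perturbed subsolution $u_\theta(x) := u((1-\theta)x)$ on $(1-\theta)^{-1}\overline{\Omega}$, which by $(\mathcal{A})$ is an open neighborhood of $\overline{\Omega}$. Writing out the viscosity test and exploiting the $p$-homogeneity of $\mathcal{H}$ from $(\mathcal{H}_4)$, one checks that $u_\theta$ is a viscosity subsolution in $(1-\theta)^{-1}\Omega$ of
\begin{equation*}
\lambda w + (1-\theta)^{-p}\mathcal{H}(Dw) - f((1-\theta)x) - \varepsilon(1-\theta)^{-2}\Delta w = 0.
\end{equation*}
Since $u$ is (uniformly) continuous, $u_\theta \to u$ uniformly on $\overline{\Omega}$, hence $M_\theta := \max_{\overline{\Omega}}(u_\theta - v) > M/2 > 0$ for $\theta$ small. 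Now apply the standard doubling of variables at a maximizer $(x_\epsilon, y_\epsilon) \in \overline{\Omega}\times\overline{\Omega}$ of $\Phi_\epsilon(x,y) := u_\theta(x) - v(y) - \tfrac{1}{2\epsilon}|x-y|^2$. Since $\overline{\Omega} \subset\subset (1-\theta)^{-1}\Omega$ where $u_\theta$ is a subsolution, Ishii's lemma produces $p_\epsilon := (x_\epsilon - y_\epsilon)/\epsilon$ and symmetric matrices $X_\epsilon \leq Y_\epsilon$ with $|X_\epsilon|, |Y_\epsilon| \leq C/\epsilon$ such that $(p_\epsilon, X_\epsilon)$ and $(p_\epsilon, Y_\epsilon)$ are admissible jets of $u_\theta$ from above at $x_\epsilon$ and of $v$ from below at $y_\epsilon$, respectively.

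Subtracting the resulting viscosity inequalities yields
\begin{equation*}
\lambda (u_\theta(x_\epsilon) - v(y_\epsilon)) + \bigl[(1-\theta)^{-p} - 1\bigr]\mathcal{H}(p_\epsilon) \leq \bigl[f((1-\theta)x_\epsilon) - f(y_\epsilon)\bigr] + \varepsilon\bigl[(1-\theta)^{-2}\mathrm{tr}(X_\epsilon) - \mathrm{tr}(Y_\epsilon)\bigr].
\end{equation*}
The homogeneity of $\mathcal{H}$ combined with the coercivity in $(\mathcal{H}_2)$ forces $\mathcal{H} \geq 0$ globally, so the second term on the left is non-negative and can be dropped. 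The right-hand side splits into: the $f$-difference, bounded by $L(|x_\epsilon - y_\epsilon| + \theta\,\mathrm{diam}(\Omega))$; the term $\varepsilon[(1-\theta)^{-2}-1]\mathrm{tr}(X_\epsilon)$, bounded by $C\theta$ uniformly in $\epsilon$ since $|X_\epsilon|\leq C/\epsilon$ and $(1-\theta)^{-2}-1 = O(\theta)$; and $\varepsilon[\mathrm{tr}(X_\epsilon) - \mathrm{tr}(Y_\epsilon)] \leq 0$ from $X_\epsilon \leq Y_\epsilon$. Letting $\epsilon \to 0^+$ and using $|x_\epsilon - y_\epsilon| \to 0$ with $u_\theta(x_\epsilon) - v(y_\epsilon) \to M_\theta$ gives $\lambda M_\theta \leq C\theta$; then $\theta \to 0^+$ yields $\lambda M \leq 0$, contradicting $\lambda, M > 0$.

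The main obstacle is the mismatch between the scaling factors $(1-\theta)^{-p}$ and $(1-\theta)^{-2}$ appearing in the equation for $u_\theta$ (a feature of $p > 2$). The argument survives because the Ishii bound $|X_\epsilon| \leq C/\epsilon$ cancels the viscosity factor $\varepsilon$, converting the a priori ruinous term $\varepsilon[(1-\theta)^{-2} - 1]\mathrm{tr}(X_\epsilon)$ into an $O(\theta)$ quantity uniformly in $\epsilon$, and because the global non-negativity of $\mathcal{H}$ permits us to discard the $[(1-\theta)^{-p}-1]\mathcal{H}(p_\epsilon)$ term rather than estimating it; without either of these facts, the double limit $\epsilon, \theta \to 0^+$ could not be taken in the present order.
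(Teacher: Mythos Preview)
Your argument has a genuine gap at the control of the mismatched second-order term. You claim that $\varepsilon[(1-\theta)^{-2}-1]\,\mathrm{tr}(X_\epsilon)$ is $O(\theta)$ uniformly in the doubling parameter $\epsilon$, on the grounds that ``the Ishii bound $|X_\epsilon|\leq C/\epsilon$ cancels the viscosity factor $\varepsilon$''. But $\varepsilon$ is the \emph{fixed} diffusion coefficient in the equation, not the penalisation parameter: there is no cancellation, and the term is of order $\varepsilon\theta/\epsilon$, which blows up as $\epsilon\to 0^+$ with $\theta$ held fixed. The order of limits you commit to ($\epsilon\to 0$ first, then $\theta\to 0$) therefore does not close.

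The paper avoids this obstruction by \emph{not} rescaling the subsolution---so that both viscosity inequalities carry the same coefficient $\varepsilon$ on the second-order part---and instead building the dilation into the penalty, taking $\phi(x,y)=C_1|(1+\theta)x-y|^2/\theta^{2-\alpha}$. A direct computation of $D^2\phi$ and $(D^2\phi)^2$ acting on vectors of the form $(\xi,\xi)$ shows that the leading $1/\theta^{2-\alpha}$ contributions cancel, and an appropriate choice of the Ishii parameter then gives $\xi^T(X_\theta-Y_\theta)\xi\leq 2C_1\theta^\alpha|\xi|^2$; this is the substitute for your missing estimate. (Your scheme can in principle be repaired by coupling the two parameters, e.g.\ $\epsilon=\theta^\beta$ with $0<\beta<1$ so that $\theta/\epsilon=\theta^{1-\beta}\to 0$, but that is a different argument from the one you wrote.)
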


\begin{proof}
For $\delta>0$ we define $C_1 = (2/\kappa)^{2-\alpha}C_0$ where $\kappa$ is defined as in \ref{itm:A1} and $C_0$ is the H\"older constant of $u$. Let us define
\begin{equation*}
    \Phi(\hat{x},y) = u\left(\frac{\hat{x}}{1+\theta}\right) - v(y) - \frac{C_1|\hat{x}-y|^2}{\theta^{2-\alpha}}, \qquad (\hat{x},y)\in (1+\theta)\overline{\Omega}\times \overline{\Omega}.
\end{equation*}
Assume $\Phi$ achieves its maximum over $(1+\theta)\overline{\Omega}\times \overline{\Omega}$ at $(\hat{x}_\theta,y_\theta)\in(1+\theta)\overline{\Omega}\times \overline{\Omega}$. We claim that $\hat{x}_\theta \in (1+\theta)\Omega$. From $\Phi(\hat{x}_\theta,y_\theta) \geq \Phi(y_\theta,y_\theta)$ we obtain
\begin{equation*}
     \frac{C_1|\hat{x}_\theta - y_\theta|^2}{\theta^{2-\alpha}} \leq  u\left(\frac{\hat{x}_\theta}{1+\theta}\right)  -  u\left(\frac{y_\theta}{1+\theta}\right)  \leq \frac{C_0|\hat{x}_\theta-y_\theta|^\alpha}{(1+\theta)^\alpha} \leq C_0|\hat{x}_\theta-y_\theta|^\alpha.
\end{equation*}
As a consequence
\begin{equation}\label{eq:distxydelta}
     \mathrm{dist}(\hat{x}_\theta,\overline{\Omega})\leq  |\hat{x}_\theta-y_\theta| \leq \left(\frac{C_0}{C_1}\right)^{\frac{1}{2-\alpha}}\theta = \left(\frac{\kappa}{2}\right)\theta.
\end{equation}
From \ref{itm:A1} we conclude $\hat{x}_\theta \in (1+\theta)\Omega$. This yields that
\begin{equation*}
    (\hat{x},y)\mapsto u\left(\frac{\hat{x}}{1+\theta}\right) - v(y) -  \frac{C_1|\hat{x}-y|^2}{\theta^{2-\alpha}},\qquad\text{has a maximum over}\;(1+\theta)\overline{\Omega}\times\overline{\Omega}.
\end{equation*}
at $(\hat{x}_\theta,y_\theta)\in (1+\theta)\Omega\times\overline{\Omega}$. Let $\hat{x} = (1+\theta) x$ for $x\in \Omega$ then $\hat{x}\mapsto x$ is a bijection from $(1+\theta)\overline{\Omega}$ to $\overline{\Omega}$. Equivalently, we deduce that
\begin{equation*}
    (x,y)\mapsto u\left(x\right) - v(y) -  \frac{C_1|(1+\theta)x-y|^2}{\theta^{2-\alpha}}\qquad\text{has a maximum over}\;\overline{\Omega}\times\overline{\Omega}
\end{equation*}
at $(x_\theta,y_\theta)\in \Omega\times\overline{\Omega}$. Let
\begin{equation*}
    \phi(x,y) = \frac{C_1|(1+\theta)x-y|^2}{\theta^{2-\alpha}}, \qquad (x,y)\in \overline{\Omega}\times\overline{\Omega}.
\end{equation*}
From Lemma \ref{lem:ishii} below, we can find $X_\theta,Y_\theta\in \mathbb{S}^n$ such that
\begin{equation*}
     \Big(u(x_\theta),D_x\phi(x_\theta,y_\theta),X_\theta\Big) \in \overline{J}^{2,+}_{\overline{\Omega}}u(x_\theta) , \qquad \Big(v(y_\theta),-D_y\phi(x_\theta,y_\theta),Y_\theta\Big) \in \overline{J}^{2,-}_{\overline{\Omega}}v(y_\theta)
\end{equation*}
and 
\begin{equation*}
    \xi^T(X_\theta - Y_\theta)\xi \leq 2C_1\theta^\alpha |\xi|^2 \qquad\text{for all}\;\xi\in\R^n.
\end{equation*}
Choose $\xi = \sigma_i(x)$ in \eqref{eq:lemIshi_b} where $\sigma_i(x)$ is the $i^{\text{th}}$-row vector of $\sigma(x)$ for $i=1,2,\ldots, k$ and taking the sum we deduce that
\begin{equation*}
    \mathrm{Tr}(a(x)(X_\theta-Y_\theta)) = \frac{1}{2}\mathrm{Tr}(\sigma^T(x)(X_\theta-Y_\theta)\sigma(x)) = \sum_{i=1}^k \sigma_i^T(x)(X_\theta - Y_\theta)\sigma_i(x) \leq k \Lambda_2 C_1\theta^\alpha.
\end{equation*}
Here, $\Lambda_2$ is given in \eqref{condA3}. The subsolution test of $u$ at $x_\theta$ and the supersolution test of $v$ at $y_\theta$ give
\begin{align}
    &\delta u(x_\theta) + (1+\theta)^p \mathcal{H}\left(\frac{2C_1((1+\theta)x_\theta - y_\theta)}{\theta^{2-\alpha}}\right) -f(x_\theta) - \varepsilon\, \mathrm{Tr}(a(x)X_\theta) \leq 0, \label{eq:sub1}\\
    &\delta v(y_\theta) + \mathcal{H}\left(\frac{2C_1((1+\theta)x_\theta - y_\theta)}{\theta^{2-\alpha}}\right) -f(y_\theta) - \varepsilon\, \mathrm{Tr}(a(x)Y_\theta)  \geq 0. \label{eq:super1}
\end{align}
Subtract \eqref{eq:super1} from \eqref{eq:sub1} and use the fact that $1\leq (1+\theta)^p$ we deduce that
\begin{align}
    \delta u(x_\theta) - \delta v(y_\theta) &\leq  f(x_\theta) - f(y_\theta) +\varepsilon \;\mathrm{Tr}(a(x)(X_\theta-Y_\theta)) \\
    & + \Big(1-(1+\theta)^p\Big) \mathcal{H} \left(\frac{2C_1((1+\theta)x_\theta - y_\theta)}{\theta^{2-\alpha}}\right) \nonumber\\
    &\leq \omega_f(|x_\theta - y_\theta|) +\varepsilon\Big(k \Lambda_2 C_1 \theta^\alpha\Big) \label{eq:mas}
\end{align}
where $\omega_f(\cdot)$ is the modulus of continuity of $f$. By compactness of $\overline{\Omega}$ we can assume $(x_\theta,y_\theta)\to (z,z)$ as $\theta\to 0$. Assume $x_0\in \overline{\Omega}$ such that $\max_{x\in \overline{\Omega}}(u-v) = u(x_0) - v(x_0)$. Using $\Phi(\hat{x}_\theta,y_\theta)\geq \Phi(\hat{x}_0,x_0)$ where $\hat{x}_0 = (1+\theta)x_0$ we have
\begin{equation*}
    u(x_\theta) - v(y_\theta) \geq u(x_0) - v(y_0) - C_1\theta^\alpha|x_0|^2
\end{equation*}
Let $\theta\to 0$ we deduce that $u(x_0) - v(y_0)  \geq u(z) - v(z)  \geq u(x_0) - v(y_0)$. Therefore we obtain $(u-v)(z) = \max_{\overline{\Omega}}(u-v)$, i.e.,
\begin{equation*}
    \lim_{\theta \to 0} \big(u(x_\theta) - v(y_\theta)\big) = \max_{\overline{\Omega}} (u-v).
\end{equation*}
Using this in \eqref{eq:mas} we obtain $\max_{\overline{\Omega}} (u-v) \leq 0$ and thus $u\leq v$ on $\overline{\Omega}$.
\end{proof}

\begin{lem}\label{lem:ishii} Under the setting in the proof of Theorem~\ref{CP continuous}, there exists $X_\theta,Y_\theta\in \mathbb{S}^n$ such that 
\begin{equation*}
    \Big(u(x_\theta),+D_x\phi(x_\theta,y_\theta),X_\theta\Big) \in \overline{J}^{2,+}_{\overline{\Omega}}u(x_\theta), \qquad
    \Big(v(y_\theta),-D_y\phi(x_\theta,y_\theta),Y_\theta\Big) \in \overline{J}^{2,-}_{\overline{\Omega}}v(y_\theta),
\end{equation*}
and 
\begin{equation}\label{eq:lemIshi_b}
    \xi^T(X_\theta - Y_\theta)\xi \leq 2C_1\theta^\alpha|\xi|^2, \qquad \text{for all}\;\xi\in \R^n.
\end{equation}
\end{lem}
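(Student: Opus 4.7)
The statement is a direct application of the Crandall--Ishii Theorem of Sums, tailored to the penalty function $\phi(x,y) = C_1\theta^{-(2-\alpha)}|(1+\theta)x-y|^2$. The crux is that $\phi$ is a quadratic form in the single linear combination $(1+\theta)x-y$, so its Hessian has rank $n$ (not $2n$) and its square is a scalar multiple of itself; this is precisely what allows the ``quadratic correction'' in Ishii's lemma to be absorbed cleanly and to produce the sharp constant $2C_1$.

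I would set $b := 1+\theta$ and $K := 2C_1/\theta^{2-\alpha}$, so that $\phi = \tfrac{K}{2}|bx-y|^2$. Direct differentiation gives
\[
A := D^2\phi(x_\theta, y_\theta) = K\begin{pmatrix} b^2 I_n & -bI_n \\ -bI_n & I_n \end{pmatrix}, \qquad \bigl\langle A(\xi_1,\xi_2),(\xi_1,\xi_2)\bigr\rangle = K\,|b\xi_1-\xi_2|^2,
\]
and a direct block-multiplication yields the algebraic identity $A^2 = K(1+b^2)A$, whence
\[
A + \eta A^2 = \bigl(1+\eta K(1+b^2)\bigr)A \qquad \text{for every }\eta>0.
\]
Applying the Theorem of Sums at the maximum $(x_\theta,y_\theta)$ of $u(x)-v(y)-\phi(x,y)$ (with $x_\theta$ interior to $\Omega$, and the state-constraint semi-jets $\overline{J}^{2,\pm}_{\overline{\Omega}}$ handling the possibly boundary point $y_\theta$), for each $\eta>0$ there exist $X_\eta, Y_\eta \in \mathbb{S}^n$ with the claimed jet inclusions and
\[
\begin{pmatrix} X_\eta & 0 \\ 0 & -Y_\eta \end{pmatrix} \;\leq\; \bigl(1+\eta K(1+b^2)\bigr)A.
\]
Testing this matrix inequality against the diagonal vector $(\xi,\xi)\in \R^{2n}$ and using $K(b-1)^2 = K\theta^2 = 2C_1\theta^\alpha$ yields
\[
\xi^T(X_\eta - Y_\eta)\xi \;\leq\; \bigl(1+\eta K(1+b^2)\bigr)\cdot 2C_1\theta^\alpha|\xi|^2.
\]

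To conclude, I would choose $\eta=\eta(\theta)$ with $\eta K(1+b^2)\to 0$ as $\theta\to 0^+$, for instance $\eta := \theta^{3-\alpha}$, which forces $\eta K(1+b^2) = O(\theta)$, and then set $X_\theta := X_{\eta(\theta)}$, $Y_\theta := Y_{\eta(\theta)}$. The resulting bound is $2C_1\theta^\alpha(1+o(1))$ as $\theta\to 0^+$, and the scalar $1+o(1)$ factor is harmless for the comparison argument: the constant $C_1 = (2/\kappa)^{2-\alpha}C_0$ only needs to be sufficiently large for the geometric argument that puts $\hat{x}_\theta$ inside $(1+\theta)\Omega$, so any inflation by a factor bounded above is absorbed by enlarging $C_1$ slightly. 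The only subtlety is the $\eta A^2$ correction term in Ishii's lemma, which would be awkward in general but collapses here to a scalar multiple of $A$ thanks to the rank-$n$ structure of the Hessian; this is the main technical point that the plan turns on.
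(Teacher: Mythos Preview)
Your approach is correct and takes essentially the same route as the paper --- apply Ishii's Lemma, test the resulting block inequality against the diagonal vector $(\xi,\xi)$ --- but your handling of the quadratic correction term is cleaner. The paper computes $A^2$ by brute-force block multiplication, expands it in powers of $\theta$, and then chooses the Ishii parameter $\mu = 2C_1\theta^{\alpha-4}$ so that the $A$-contribution and the $\mu^{-1}A^2$-contribution each give $C_1\theta^\alpha|\xi|^2$, summing to the stated $2C_1\theta^\alpha|\xi|^2$. Your observation that $A^2 = K(1+b^2)A$ (a consequence of the rank-$n$ structure) collapses the correction to a scalar factor and avoids the block algebra entirely.

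The one discrepancy is that your final bound is $2C_1\theta^\alpha(1+o(1))$ rather than the exact $2C_1\theta^\alpha$ stated in the lemma; with your factorisation $A+\eta A^2 = (1+\eta K(1+b^2))A$, no positive choice of $\eta$ can kill the multiplicative excess. You correctly note this is harmless for the comparison argument (one may enlarge $C_1$), but strictly speaking you have proved a slightly weaker statement than the one written. The paper's choice of parameter is precisely what recovers the exact constant $2$.
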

\begin{proof} By Ishii's Lemma (see \cite{Crandall1992} or \cite[Theorem 3.6]{koike_beginners_2014}), for each $\mu>1$ there are $X_\mu,Y_\mu\in \mathbb{S}^n$ such that
\begin{equation*}
    \big(u(x_\theta),D_x\phi(x_\theta,y_\theta),X_\mu\big) \in \overline{J}^{2,+}_{\overline{\Omega}}u(x_\theta), \qquad \big(v(y_\theta),-D_y\phi(x_\theta,y_\theta),Y_\theta\big) \in \overline{J}^{2,-}_{\overline{\Omega}}v(y_\theta),
\end{equation*}
and
\begin{equation}\label{eq:matrix1}
    -(\mu+\Vert \mathbf{A}\Vert)\left(\begin{array}{cc}
        I_n & 0 \\
        0   & I_n
    \end{array}\right) \preceq \left(\begin{array}{cc}
        X_\mu & 0 \\
        0   & -Y_\mu
    \end{array}\right) \preceq \mathbf{A}\left(I_{2n} + \frac{1}{\mu} \mathbf{A}\right),
\end{equation}
where $\mathbf{A} = D^2\phi(x,y)\in \mathbb{S}^{2n}$. We compute from the definition of $\phi$ that
\begin{align*}
    \mathbf{A} &= \frac{C_1}{\theta^{2-\alpha}}\left(\begin{array}{cc}
        \;\;\;\,(1+\theta)^2I_n & -(1+\theta)I_n \\ 
         -(1+\theta)I_n & \;\;\;\; I_n
    \end{array}\right) \\
    &= \frac{C_1}{\theta^{2-\alpha}} \left[\left(\begin{array}{cc}
        \;\;I_n & -I_n  \\
        -I_n & \;\;\;I_n
    \end{array}\right) + \theta \left(\begin{array}{cc}
        2I_n & -I_n   \\
        -I_n & 0
    \end{array}\right) +\theta^2\left(\begin{array}{cc}
        I_n & 0   \\
        0 & 0
    \end{array}\right) \right].
\end{align*}
The first two terms annihilate $(\xi,\xi)^T$ for $\xi\in \R^N$. Therefore
\begin{equation*}
    (\xi,\xi) \mathbf{A}(\xi,\xi)^T = \frac{C_1}{\theta^{2-\alpha}}\Big[\theta^2|\xi|^2\Big] = C_1\theta^\alpha|\xi|^2.
\end{equation*}
Similarly, we have
\begin{equation*}
\begin{split}
    \mathbf{A}^2 &= \left(\frac{C_1}{\theta^{2-\alpha}}\right)^2\left(\begin{array}{cc}
        (\theta^4+4\theta^3 + 7\theta^2 + 6\theta + 1)I_n & -(\theta^3+3\theta^2+4\theta+1)I_n  \\
         \qquad -(\theta^3+3\theta^2+4\theta+1)I_n &  \qquad\quad\; (\theta^2+2\theta+2)I_n 
    \end{array}\right)\\
    &= \frac{C_1^2}{\theta^{4-2\alpha}}\left[\left(\begin{array}{cc}
        \;\;I_n & -I_n  \\
        -I_n & \;\;\;2I_n
    \end{array}\right) +2\theta \left(\begin{array}{cc}
        3I_n & -2I_n  \\
         -2I_n & I_n
    \end{array}
    \right) + \theta^2 \left(\begin{array}{cc}
        7I_n & -3I_n  \\
         -3I_n & I_n
    \end{array}
    \right) \right. \\
    & \qquad\qquad \qquad\qquad\qquad\qquad+ \left. 
    \theta^3 \left(\begin{array}{cc}
        4I_n & -1I_n  \\
         -1I_n & 0
    \end{array}\right) + \theta^4 \left(\begin{array}{cc}
        I_n & 0 \\
         0 & 0
    \end{array}\right) 
    \right].
\end{split}
\end{equation*}
Using $(\xi,\xi)^T$ for $\xi\in \R^n$ the previous equation we deduce that
\begin{equation*}
    (\xi,\xi) \mathbf{A}^2 (\xi,\xi)^T = \frac{C_1^2}{\theta^{4-2\alpha}}\Big(1+0 +2\theta^2+2\theta^3 + \theta^4\Big)|\xi|^2.
\end{equation*}
We conclude that for $\theta$ small then
\begin{equation*}
    \xi^T(X_\mu-Y_\mu)\xi \leq \left(C_1\theta^\alpha + \frac{2C_1^2}{\mu \theta^{4-2\alpha}}\right)|\xi|^2.
\end{equation*}
Choose $\mu = \mu(\theta)$ such that $\mu = 2C_1\theta^{\alpha-4} > 1$ we obtain the conclusion \eqref{eq:lemIshi_b}. 
\end{proof}

From Theorem \ref{thm:Perron}, the local H\"older continuity of solutions, and the comparison principle, we obtain the following corollary.
\begin{cor}\label{cor:bound_delta-u_delta} Assume \eqref{eq:geometric-condition}, \ref{itm:A21}, \ref{itm:A22}, \ref{itm:A23}, \ref{itm:A24} and $\lambda>0$. There exists a unique solution $u_\delta\in \mathrm{C}^{0,\alpha}(\overline{\Omega})$ of \eqref{HJ-static}.
\end{cor}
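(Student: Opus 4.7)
The plan is to assemble the three ingredients already available in the appendix: Perron's existence machinery, the universal Hölder estimate for subsolutions, and the comparison principle for Hölder continuous sub/supersolutions.

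First I would invoke Theorem~\ref{thm:Perron} to produce the candidate solution
\begin{equation*}
    u^\delta(x) = \sup\Big\{ w(x): w\in \mathrm{USC}(\overline{\Omega})\ \text{is a subsolution of \eqref{HJ-static} in } \Omega\Big\},
\end{equation*}
which is a state-constraint viscosity solution of \eqref{HJ-static} (subsolution in $\Omega$, supersolution on $\overline{\Omega}$) belonging a priori only to $\mathrm{C}^{0,1}_{\mathrm{loc}}(\Omega)\cap \mathrm{LSC}(\overline{\Omega})$. The nonempty-ness of the class of admissible subsolutions is guaranteed by taking a sufficiently negative constant, using \ref{itm:H2}.

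Next I would upgrade the regularity. Since $u^\delta$ is, in particular, an upper semicontinuous subsolution of \eqref{HJ-static} on $\Omega$, Theorem~\ref{thm:Holder} yields the global Hölder bound $u^\delta\in \mathrm{C}^{0,\alpha}(\overline{\Omega})$ with $\alpha = (p-2)/(p-1)$ and a constant depending only on $\mathrm{diam}(\Omega)$, $p$, and the constants in \ref{itm:H2}, \ref{itm:H3}. In particular $u^\delta$ extends continuously to $\overline{\Omega}$, so the supersolution inequality, which was established on $\overline{\Omega}$ in the lower semicontinuous envelope sense, is realized by the continuous function $u^\delta$.

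Finally, for uniqueness, if $v\in \mathrm{C}(\overline{\Omega})$ is another state-constraint solution, then Theorem~\ref{thm:Holder} applied to $v$ (as a subsolution) gives $v\in \mathrm{C}^{0,\alpha}(\overline{\Omega})$, and similarly for $u^\delta$. The comparison principle (Theorem~\ref{CP continuous}), which requires exactly this Hölder regularity and the structural hypotheses \ref{itm:A1}, \ref{itm:H1}--\ref{itm:H4} together with $\delta>0$, then gives $u^\delta\le v$ and $v\le u^\delta$ on $\overline{\Omega}$, hence $u^\delta=v$. The only mildly subtle point is to verify that the output of Perron's construction is genuinely continuous up to the boundary before invoking the comparison principle; this is precisely what Theorem~\ref{thm:Holder} delivers, so no additional work is needed beyond a clean chaining of the three results.
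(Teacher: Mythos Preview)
Your proposal is correct and follows essentially the same route as the paper's proof: Perron's construction (Theorem~\ref{thm:Perron}) for existence, the H\"older estimate (Theorem~\ref{thm:Holder}) for the regularity upgrade, and the comparison principle (Theorem~\ref{CP continuous}) applied to two H\"older solutions for uniqueness. Your version is slightly more explicit about why the Perron supremum is continuous up to the boundary before invoking comparison, but this is just a clarification of the same argument, not a different approach.
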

\begin{proof} Let $u_\delta\in \mathrm{C}(\overline{\Omega})$ be the viscosity solution defined as in \eqref{eq:perron_def}. By Theorem \ref{thm:Holder} we have $u^\delta\in \mathrm{C}^{0,\alpha}(\overline{\Omega})$ where $\alpha = (p-2)/(p-1)$. If $v\in \mathrm{C}(\overline{\Omega})$ is another viscosity solution to \eqref{HJ-static} then by Theorem \ref{thm:Holder} $v\in \mathrm{C}^{0,\alpha}(\overline{\Omega})$ as well, thus by the comparison principle in Theorem \ref{CP continuous} we obtain $u_\delta\equiv v$.
\end{proof}

\begin{prop}\label{pro:bound_on_sln}
Assume \eqref{eq:geometric-condition}, \ref{itm:A21}, \ref{itm:A22}, \ref{itm:A23}, \ref{itm:A24} and $\lambda>0$.
\begin{itemize}
    \item[(i)] We have the uniform bound from below $\delta u_\delta(\cdot) \geq -\max_{x\in \overline{\Omega}}H(x,0)$ for all $\delta>0$.
    \item[(ii)] If $B(0,\kappa) \subset \Omega$ then $\delta u_\delta(x) \leq C(\kappa)$ for $x\in \overline{B(0,\kappa)}$, where $C$ depends on $\kappa$ and constants from \ref{itm:A21},\ref{itm:A22}, \ref{itm:A23}.
\end{itemize}
\end{prop}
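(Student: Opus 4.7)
The plan is to prove both inequalities by comparison with explicit barriers: part~(i) will follow from the maximality of $u_\delta$ among subsolutions (Theorem~\ref{thm:Perron}), while part~(ii) will use a local viscosity comparison on an auxiliary ball compactly contained in $\Omega$.

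For part~(i), set $\phi_{-}(x):=-\delta^{-1}\,\max_{\overline{\Omega}}H(\cdot,0)$. Being constant, $D\phi_{-}\equiv 0$, $D^{2}\phi_{-}\equiv 0$, and
\[
    \delta\phi_{-}(x)+H(x,D\phi_{-}(x))-\varepsilon\,\mathrm{Tr}(a(x)D^{2}\phi_{-}(x)) \;=\; -\max_{\overline{\Omega}}H(\cdot,0)+H(x,0) \;\leq\; 0
\]
at every $x\in\overline{\Omega}$; hence $\phi_{-}$ is a classical, and therefore viscosity, subsolution of~\eqref{HJ-static} in $\Omega$. Theorem~\ref{thm:Perron} then yields $u_\delta\geq\phi_{-}$ on $\overline{\Omega}$, which rearranges to~(i).

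For part~(ii), recall that in the setting of~\ref{itm:A1} one has $\overline{B(0,\kappa)}\subset\Omega$; fix $\rho\in(\kappa,\mathrm{dist}(0,\partial\Omega))$ so that $\overline{B(0,\rho)}\subset\Omega$, and consider the local barrier
\[
    \phi_\delta(x) \;:=\; \frac{K}{\delta} + \frac{B_\delta}{(\rho^{2}-|x|^{2})^{\beta}}, \qquad x\in B(0,\rho),
\]
with $\beta>0$ fixed, $B_\delta:=B_0\min(1,1/\delta)$, and $K,B_0>0$ chosen in terms of $\rho,\beta$ and the constants of \ref{itm:H2}--\ref{itm:H3}. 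A direct computation gives
\[
    |D\phi_\delta|^{p}\sim B_\delta^{\,p}(\rho^{2}-|x|^{2})^{-p(\beta+1)}, \qquad |D^{2}\phi_\delta|\sim B_\delta(\rho^{2}-|x|^{2})^{-\beta-2};
\]
since $p>2$ implies $p(\beta+1)>\beta+2$, the super-quadratic coercivity $H(x,\xi)\geq a_R|\xi|^{p}-b_R$ from~\ref{itm:H2} forces the gradient contribution to dominate $\varepsilon\,\mathrm{Tr}(a\,D^{2}\phi_\delta)$ near $\partial B(0,\rho)$ once $B_0$ is large enough; while near the origin the zero-order term $\delta\phi_\delta\geq K$ absorbs $-b_R$ together with the bounded Hessian contribution $O(B_0)$ provided $K$ is sufficiently large. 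Combining these two regimes makes $\phi_\delta$ a classical supersolution of~\eqref{HJ-static} in $B(0,\rho)$ that blows up on $\partial B(0,\rho)$.

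Since $u_\delta\in\mathrm{C}(\overline{B(0,\rho)})$ is bounded whereas $\phi_\delta\to+\infty$ at $\partial B(0,\rho)$, the function $u_\delta-\phi_\delta$ attains its maximum over $\overline{B(0,\rho)}$ at some interior point $x^{\star}$. At $x^{\star}$, $\phi_\delta$ shifted by a constant touches $u_\delta$ from above, so pairing the viscosity subsolution test for $u_\delta$ with the classical supersolution inequality for $\phi_\delta$ gives $u_\delta(x^{\star})\leq\phi_\delta(x^{\star})$ and therefore $u_\delta\leq\phi_\delta$ on $\overline{B(0,\rho)}$. Restricting to $\overline{B(0,\kappa)}$ and multiplying by $\delta$ yields
\[
    \delta u_\delta(x) \;\leq\; K + \delta B_\delta\,(\rho^{2}-\kappa^{2})^{-\beta} \;\leq\; K+B_0\,(\rho^{2}-\kappa^{2})^{-\beta} \;=:\; C(\kappa),
\]
uniformly in $\delta$, using $\delta B_\delta\leq B_0$ by construction. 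The principal obstacle is the supersolution verification for $\phi_\delta$ uniformly in $\delta$: the choice $B_\delta=B_0\min(1,1/\delta)$ preserves the singularity profile while keeping $\delta B_\delta$ bounded, and the hypothesis $p>2$ is essential precisely because it alone guarantees $p(\beta+1)>\beta+2$, i.e.\ that the singular super-quadratic gradient term dominates the singular Hessian term at $\partial B(0,\rho)$.
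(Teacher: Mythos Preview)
Your argument for part~(i) is essentially identical to the paper's. For part~(ii) your approach is correct but genuinely different: the paper does not build an explicit blow-up barrier. Instead it takes $v$ to be the state-constraint solution on $B(0,\kappa)$ with discount factor~$1$, observes that $w:=v+(1+\delta^{-1})\|v\|_{L^\infty(B(0,\kappa))}$ is a state-constraint supersolution on $\overline{B(0,\kappa)}$ for the equation with discount $\delta$, and applies the state-constraint comparison principle (Theorem~\ref{CP continuous}) on $B(0,\kappa)$ to conclude $u_\delta\le w$ there. This buys two things over your construction: first, the constant $C(\kappa)$ depends only on $\|v\|_{L^\infty(B(0,\kappa))}$, hence only on $\kappa$ and the structural constants, whereas your bound $K+B_0(\rho^2-\kappa^2)^{-\beta}$ depends on the auxiliary radius $\rho>\kappa$ and thus implicitly on $\mathrm{dist}(\partial B(0,\kappa),\partial\Omega)$; second, it avoids the barrier calculus entirely by reusing the well-posedness theory already in hand. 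Your route, on the other hand, is more self-contained and makes the role of $p>2$ completely explicit through the exponent comparison $p(\beta+1)>\beta+2$. One small caution: your choice $B_\delta=B_0\min(1,1/\delta)$ keeps $\delta B_\delta$ bounded but, for $\delta$ large, shrinks the gradient term faster than the Hessian term and can spoil the supersolution inequality near $\partial B(0,\rho)$; since the application is to $\delta\to 0^+$, simply taking $B_\delta\equiv B_0$ and restricting to $\delta\le 1$ is the cleaner fix.
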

\begin{proof} Since $u\equiv -\delta^{-1}\max \{H(x,0): x\in \overline{\Omega}\}$ is a classical subsolution in $\Omega$, the comparison principle concludes the lower bound. 

Let $v\in \mathrm{C}(\overline{B(0,\kappa)})$ be a solution to the state-constraint solution to the problem \eqref{HJ-static} with discount factor $\delta=1$ and on the domain $B(0,\kappa)$, i.e.,
\begin{equation*}
\begin{cases}
    v(x) + H\big(x,Dv(x)\big) -\varepsilon\,\mathrm{Tr}\big(a(x)D^2v (x)\big) \leq 0\qquad\text{in}\;B(0,\kappa),\\
    v(x) + H\big(x,Dv(x)\big) -\varepsilon\,\mathrm{Tr}\big(a(x)D^2v (x)\big) \geq 0\qquad\text{in}\;\overline{B(0,\kappa)}.
\end{cases}
\end{equation*}
We observe that 
\begin{equation*}
    w(x) = v(x) + (1+\delta^{-1})\Vert v\Vert _{L^\infty(\overline{B(0,\kappa)})}  , \qquad x\in \overline{B(0,\kappa)}
\end{equation*}
is a supersolution to \eqref{state-def}, thus by comparison principle on $B(0,\kappa)$ we have the conclusion.
\end{proof}

\section{Proofs of some technical results}
\label{ap:c}

\begin{proof}[Proof of Theorem \ref{thm:Holder_est_nested_domain}] Since $u$ is a subsolution to \eqref{eq:reference_problem}, by comparison principle we have $u\leq v$ on $\overline{\Omega}$. Let $\tilde{u}(x) = (1+\theta)^{-2}u((1+\theta)x)$ for $x\in \overline{\Omega}$. We define 
\begin{equation*}
    \Psi(\hat{x},y) = v\left(\frac{\hat{x}}{1+\theta}\right) - \tilde{u}(y) - \frac{C_1|\hat{x}-y|^2}{\theta^{2-\alpha}}, \qquad (\hat{x},y) \in (1+\theta)\overline{\Omega}\times \overline{\Omega}
\end{equation*}
where $C_1 = (2/\kappa)^{2-\alpha}C_0$. Assume $\Psi$ has a maximum at $(\hat{x}_\theta, y_\theta) \in (1+\theta)\overline{\Omega}\times \overline{\Omega}$. Using $\Psi(\hat{x}_\theta, y_\theta)  \geq (y_\theta, y_\theta) $ we deduce that
\begin{equation*}
    \frac{C_1|\hat{x}_\theta-y_\theta|^2}{\theta^{2-\alpha}} \leq v\left(\frac{\hat{x}_\theta}{1+\theta}\right) - v\left(\frac{\hat{y}_\theta}{1+\theta}\right) \leq C_0|\hat{x}_\theta-y_\theta|^{2-\alpha}
\end{equation*}
where $v$ is H\"older continuous (Theorem \ref{thm:Holder_main}). Therefore $\mathrm{dist}(\hat{x}_\theta, \overline{\Omega}) \leq|\hat{x}_\theta - y_\theta|\leq  \frac{\kappa}{2}\theta$. By \ref{itm:A1} we obtain $\hat{x}_\theta \in (1+\theta)\Omega$. Let $\hat{x} = (1+\theta)x$ for $x\in \Omega$, we conclude that
\begin{equation*}
    (x,y)\mapsto v(x) - \tilde{u}(y) - \frac{C_1|(1+\theta)x-y|^2}{\theta^{2-\alpha}} 
\end{equation*}
has a maximum over $\overline{\Omega}\times \overline{\Omega}$ at $(x_\theta,y_\theta)\in \Omega\times \overline{\Omega}$. From Ishii's lemma there exist matrices $X_\theta,Y_\theta \in \mathbb{S}^n$ such that $\xi^T (X_\theta -Y_\theta)\xi \leq 2C_1\theta^\alpha |\xi|^2$ for all $\xi\in \R^n$ and 
\begin{align*}
    &\delta v(x_\theta) + (1+\theta)^p \left|\frac{2C_1((1+\theta)x_\theta - y_\theta)}{\theta^{2-\alpha}}\right|^p - f(x_\theta) - \varepsilon\;\mathrm{Tr}(X_\theta) \leq 0,\\ 
    &\delta(1+\theta)^2 \tilde{u}(y_\theta) + (1+\theta)^p \left|\frac{2C_1((1+\theta)x_\theta - y_\theta)}{\theta^{2-\alpha}}\right|^p - f((1+\theta)y_\theta) - \varepsilon\;\mathrm{Tr}(Y_\theta) \geq 0.
\end{align*}    
Therefore
\begin{equation*}
\begin{aligned}
    \delta v(x_\theta) - \delta u((1+\theta)y_\theta) \leq f(x_\theta) - f((1+\theta)y_\theta) + \varepsilon\;\mathrm{Tr}(X_\theta - Y_\theta)\leq C\theta + \varepsilon C \theta^\alpha
\end{aligned}
\end{equation*}
where $C$ depends on $\mathrm{diam}(\Omega)$, the Lipschitz constant of $f$ on $U$ and $p$. Finally, for $x\in \overline{\Omega}$ then $\Psi(\hat{x}_\theta, y_\theta) \geq \Psi((1+\theta)x, x)$ implies that 
\begin{equation*}
     v(x) - \frac{1}{(1+\theta)^2}u((1+\theta)x) \leq v(x_\theta) -  \frac{1}{(1+\theta)^2}u((1+\theta)x_\theta) + C_1(1+\theta)^2|x|^2\theta^\alpha.
\end{equation*}
Therefore
\begin{equation*}
\begin{aligned}
    \delta v(x) - \delta u(x) - C|\theta|^\alpha  &\leq (1+\theta)^2 \delta v(x) - \delta u((1+\theta)x) \\
    &\leq  \big(\delta v(x_\theta) -\delta u((1+\theta)x_\theta)\big) + \Big((1+\theta)^2-1\Big)\delta v(x_\theta) +  4C_1\delta \theta^\alpha |x|^2\\
    &\leq C\theta + C\theta^\alpha + 2C\theta + 4C_1\delta \theta^\alpha|x|^2
\end{aligned}
\end{equation*}
where we invoke Corollary \ref{cor:improved_bounds} to obtain $|\delta v(x_\theta)|\leq C$. 
\end{proof}

\addtocontents{toc}{\protect\setcounter{tocdepth}{0}}
\section*{Acknowledgments}

The authors thank Hung V. Tran for his useful comments. Son N.T. Tu thanks Yeoneung Kim for helping with correcting typos. The authors are grateful to the Institute for Pure \& Applied Mathematics (IPAM, Los Angeles) for the support and hospitality during the program 
High Dimensional Hamilton--Jacobi PDEs, where the work on this article was started.

\addtocontents{toc}{\protect\setcounter{tocdepth}{1}}

\bibliography{references.bib}{}
\bibliographystyle{siam}

\end{document}